\theoremstyle{plain}
\newtheorem{thm}[subsection]{Theorem}
\newtheorem{lemma}[subsection]{Lemma}
\newtheorem{prop}[subsection]{Proposition}
\newtheorem{cor}[subsection]{Corollary}
\theoremstyle{definition}
\newtheorem{rmk}[subsection]{Remark}
\newtheorem{defn}[subsection]{Definition}
\let\a\alpha
\let\c\chi
\let\g\gamma
\let\l\lambda
\let\m\mu
\let\o\omega
\let\r\rho
\let\s\sigma
\let\t\tau
\def\scr{\mathscr}
\def\cal{\mathcal}
\let\D\Delta
\let\G\Gamma
\let\O\Omega
\newcommand{\ra}{\longrightarrow}
\newcommand{\im}{{\rm im}\:}
\newcommand{\DD}{{\mathbb D}}
\newcommand{\GG}{{\mathbb G}}
\newcommand{\F}{{\mathbb F}}
\newcommand{\Z}{{\mathbb Z}}
\newcommand{\C}{{\mathbb C}}
\newcommand{\Q}{{\mathbb Q}}
\newcommand{\R}{{\mathbb R}}
\newcommand{\PP}{{\mathbb P}}
\newcommand{\cC}{\mathcal{C}}
\newcommand{\cF}{\mathcal{F}}
\newcommand{\cG}{\mathcal{G}}
\newcommand{\cE}{\mathcal{E}}
\newcommand{\cI}{\mathcal{I}}
\newcommand{\cL}{{\mathcal{L}}}
\newcommand{\cO}{\mathcal{O}}
\newcommand{\cT}{\mathcal{T}}
\newcommand{\cV}{\mathcal{V}}
\newcommand{\cW}{\mathcal{W}}
\newcommand{\Spec}{{\mbox{Spec }}}
\newcommand{\End}{{\mathrm{End}}}
\newcommand{\cri}{{\mathrm{cris}}}
\newcommand{\Aut}{{\mathrm{Aut}}}
\newcommand{\ord}{{\mathrm{ord}}}
\newcommand{\Rep}{{\mathrm{Rep}}}
\newcommand{\univ}{{\mathrm{univ}}}
\newcommand{\Hom}{{\mathrm{Hom}}}
\begin{document}

\title{Crystalline Hodge cycles and Shimura curves in positive characteristics}
\author{Jie Xia}
\maketitle

\begin{abstract}
In this paper, we seek an appropriate definition for a Shimura curve of Hodge type in positive characteristics, i.e. a characterization of curves in positive characteristics which are reduction of Shimura curve over $\C$. Specifically, we study the liftablity of a curve in moduli space $\cal{A}_{4,1,k}$ of principally polarized abelian varieties over $k, \text{char } k=p$. We show that some conditions on the crystalline Hodge cycles over such a curve imply that this curve can be lifted to a Shimura curve. 
\end{abstract}

\section{Introduction}

\subsection{Motivations and the main results}
Classical Shimura varieties are defined over number fields. As far as I know, there is no definition of Shimura varieties in positive characteristics. The theory of integral models of Shimura varieties has been developed and it is the main tool to study Shimura varieties in positive characteristic.  In this paper, we seek an intrinsic definition of Shimura varieties in characteristic $p$ which guarantees that they are reductions of classical Shimura varieties. 

Let us recall the definition of Shimura varieties. 
Let $\mathbb{S}$ be the Weil restriction of scalar $\mathrm{Res}_{\C/\R} \GG_m$. A Shimura datum $(G, X)$ consists of a reductive group $G$ defined over $\Q$ and a $G(\R)$-conjugate class $X$ of a cocharacter $h: \mathbb{S} \ra G_\R$ such that 
\begin{enumerate}
\item For any $h$ in $X$, the Lie algebra $\frak{g}$ of $G_\R$, viewed as conjugation representation of $\mathbb{S}$ via $h$, has the type $(1,-1), (0,0)$ and $(-1,1)$.
\item The adjoint action of $h(i)$ induces a Cartan involution on the adjoint group of $G_\R$.
\item The adjoint group of $G_\R$ does not have a factor $H$ defined over $\Q$ such that the projection of $h$ on $H_\R$ is trivial.
\end{enumerate}

For any sufficiently small compact open subgroup $K$ of $G(\mathbb{A}_f)$, 
\[Sh_K(G, X)=G(\Q)\backslash X \times G(\mathbb{A}_f) /K\] is a complex algebraic variety. Take the inverse limit over $K$ and the resulting inverse system is called a \textit{Shimura variety}.

A large class of Shimura varieties admits an interpretation in terms of moduli of certain polarized abelian varieties. For instance, \textit{Shimura varieties of PEL type} parametrize polarized abelian varieties with a prescribed endomorphism ring. In such a class, we can take the moduli description as an equivalent definition of Shimura varieties in positive characteristics. 

In \cite{famofav}, Mumford defines a class of Shimura varieties, containing PEL type, as the moduli scheme of polarized abelian varieties (up to isogeny) whose Hodge groups are contained in a prescribed Mumford-Tate group. We call this class \textit{Shimura varieties of Hodge type}. In this paper, we focus on an important example of Shimura varieties of Hodge type. 

In the paper \cite{Mum}, Mumford constructs smooth proper Shimura curves. With appropriate level structures, such curves parameterize complex abelian fourfolds with certain Hodge cycles. In particular, the generic points on any such curve correspond to abelian varieties with endomorphism ring isomorphic to $\Z$. Therefore the Shimura curves are not of PEL type. We call such Shimura curves (with the family of abelian fourfolds) the Mumford curves. 

%An abelian fourfold is of Mumford type if it appears to be a fiber in one of the Mumford curves.

Moonen and Zarhin(\cite{Moon}) study the Hodge cycles on complex simple abelian fourfolds and give a thorough classification. 
\begin{defn}
A Hodge class is exceptional if it is not generated by divisor classes.
\end{defn}

From the Table I of the paper \cite{Moon}, we know that for a simple complex abelian fourfold $X$,  the following are equivalent: 
\begin{enumerate}
\item $X$ is of Mumford type ,
\item $X$ has no exceptional Hodge class yet has exceptional Hodge classes in $H^4(X^2)$.
\end{enumerate}
In particular, the second condition implies $\dim_\C H^{2,2}(X)=1$. 
This elegant result motivates us to study whether this criterion is still true globally in positive characteristics. Instead of a single abelian variety, we consider a family of polarized abelian varieties over a proper curve. We wonder that whether the existence of some special crystalline Hodge cycles over the curve can imply that it is a reduction of a Shimura curve. 

In the paper \cite[4.1.2]{Hodge cycles}, Ogus defines the crytalline Hodge cycles of a proper smooth scheme $Z$ over the Witt ring $W$ to be the Frobenius invariant of $H_\cri(Z/W)$(with weight 0). In our case, for any abelian scheme over a proper curve of positive characteristic,  crystalline Hodge cycles are defined to be the Frobenius eigen-elements of the global sections of Dieudonne crystal associated to the abelian scheme. 

Our basic notations and main theorem are as follows.

Let $k$ be an algebraic closure of $\F_p$, $C$ be a proper smooth curve over $k$ with absolute Frobenius $\sigma$ and $\pi: X \ra C$ be a family of four dimensional principally polarized abelian varieties. Since $\pi $ is of finite type, it can be defined over a finite field $\F_{p^{f_0}} \subset k$. 

Let $\cE$ be the Dieudonne crystal $R^1\pi_{\cri,*}(\cO_X)$ with Frobenius $F$ and Verschiebung $V$. These notations are fixed till the end of the paper. We have defined weak Mumford curves in \cite{Xia3}.

\begin{thm} \label{main thm}
Notation as above. Assume $p>2$ and there exists an integer $f$ such that $f$ is a multiple of $f_0$ and 
\begin{enumerate}
\item $X_c$ is ordinary for some closed point $c\in C$,
\item $\dim_{\Q_{p^f}} \G((C/W(k))_\cri, \wedge^4 \cE) ^{F^{f}-p^{2f}} \otimes \Q_{p^f}=1$,
\item  $\G((C/W(k))_\cri, \scr{E}nd(\wedge^2 \cE))^{F^f} \otimes \Q_{p^f}\cong \Q^{\times 4}_{p^f}$ as algebras. 
\end{enumerate}
Then $X \ra C$ is a weak Mumford curve over $k$.

If we further assume the Higgs field of $\cE$ is maximal, then there exists a family of abelian fourfolds $Y\ra C'$ such that 
\begin{enumerate} [(a)]
\item $C' \ra C$ is a finite \'etale covering,
\item $Y \ra X'$ is an isogeny over $C'$, between $Y$ and the pullback family of $X$,
\item $Y \ra C'$ is a good reduction of a Mumford curve.
\end{enumerate}
\end{thm}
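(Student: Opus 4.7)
The plan is to prove the two conclusions in turn: first use conditions (1)--(3) to produce enough structure on $\cE$ to match the definition of a weak Mumford curve from \cite{Xia3}, then exploit the maximal Higgs field assumption to lift that structure to characteristic zero. The overarching idea is to translate the Moonen--Zarhin characterization of Mumford-type abelian fourfolds, which is stated in Hodge-theoretic language, into an equivalent crystalline statement, using Ogus's comparison between Frobenius invariants and absolutely Hodge cycles on the canonical Serre--Tate lift at the ordinary fiber.

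For the first conclusion, I would start from condition (3): the four mutually orthogonal idempotents in $\G((C/W(k))_\cri,\scr{E}nd(\wedge^2\cE))^{F^f}\otimes\Q_{p^f}$ globally decompose the isocrystal $(\wedge^2\cE)_\Q$ into four $F$-sub-isocrystals $\cE_1,\ldots,\cE_4$. Condition (2) then singles out exactly one of these summands as rank one with Frobenius eigenvalue $p^{2f}$, namely the line containing a suitable power of the polarization class, while the ranks and Newton slopes of the remaining three are forced by the Riemann form and Poincar\'e duality to match the shape of the Mumford decomposition of $\wedge^2 H^1$ for a complex fourfold with quaternion action $D\otimes\R\cong\mathbb{H}\times\mathbb{H}\times M_2(\R)$. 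Using the ordinary fiber $X_c$ supplied by condition (1), I would apply Ogus's identification of Frobenius invariants on the Dieudonn\'e module of an ordinary abelian variety with absolutely Hodge classes on its canonical lift to confirm that $X_c$ is genuinely of Mumford type; since all four idempotents are defined globally over $C$, this pointwise information propagates along the curve and verifies the defining clauses of a weak Mumford curve.

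For the second conclusion, I would use the maximal Higgs field to rigidify and lift. Maximality forces the Kodaira--Spencer map to be as large as possible within the Mumford decomposition, so the local period map must agree, up to constants, with that of a classical Mumford curve. After passing to a finite \'etale cover $C'\to C$ on which the four idempotents and the Mumford-type endomorphism algebra become rational, I would construct the isogeny $Y\to X'$ that realizes the endomorphism structure integrally, so that the Dieudonn\'e crystal of $Y$ splits along the Mumford decomposition. Starting from the Serre--Tate canonical lift of $Y_c$ and propagating by a Grothendieck--Messing deformation argument, together with the theory of integral canonical models of Hodge-type Shimura varieties, I would produce a lift of $Y\to C'$ to $W(k)$ whose generic fiber maps into the Mumford Shimura variety, establishing (a)--(c).

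The main obstacle, in my view, is the globalization step: converting the pointwise crystalline-to-Hodge identification at $c$ into a genuine lift of the whole family, and then verifying that the resulting lift actually lies in the Mumford locus inside $\cA_{4,1}$ rather than in some larger Hodge-type Shimura subvariety containing it. This is where both the maximal Higgs field, which rigidifies the period map, and the precise algebra structure isolated by condition (3), which pins down the Mumford-Tate group rather than any overgroup, must be used together decisively.
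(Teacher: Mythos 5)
Your proposal diverges from the paper's argument at its central point, and the place where it diverges is a genuine gap rather than an alternative route. The paper's entire strategy is to establish a \emph{tensor} decomposition $\cE' \cong \cV_1 \otimes \cV_2 \otimes \cV_3$ into rank-two $F^f$-isocrystals over a finite \'etale cover (Proposition \ref{the first step}), and then to feed this decomposition into the main theorem of \cite{Xia3}, which already handles both the weak Mumford conclusion and the maximal-Higgs-field lifting. Your proposal instead works only with the \emph{direct sum} decomposition of $\wedge^2\cE$ coming from the four idempotents of condition (3), and asserts that ``the ranks and Newton slopes of the remaining three are forced by the Riemann form and Poincar\'e duality to match the shape of the Mumford decomposition.'' This is exactly the step that cannot be waved through: four orthogonal idempotents in $\End(\wedge^2 E)$ do not by themselves determine the ranks $1,9,9,9$, nor do they imply that $E$ itself factors as a triple tensor product. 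The paper gets this by a chain of nontrivial inputs you omit entirely: the simplicity of $X/C$ (proved from condition (3) via a Leray spectral sequence argument), the equivalence of simplicity with irreducibility of $\cE$ as an $F$-isocrystal (Theorem \ref{simple and irreducible}, whose proof occupies most of Section \ref{a glance}), the resulting reductivity of the Tannakian group $G_E$, and the case-by-case elimination in Appendix \ref{simple groups} of every simple Lie algebra with an $8$-dimensional irreducible symplectic representation, which is what forces $(\frak{g}_E)^{ss}_\C \cong \frak{sl}(2)^{\times 3}$ and $E_\C \cong E_1\otimes E_2\otimes E_3$. Without this, nothing pins down the shape of the decomposition.

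Two further points. First, your use of the ordinary fiber is misplaced: you propose to use Ogus's crystalline-to-Hodge comparison on the Serre--Tate canonical lift of $X_c$ to certify that $X_c$ is of Mumford type and then ``propagate'' along $C$; but the canonical lift of a single fiber need not sit inside any lift of the family, and the notion of weak Mumford curve is a global condition on the Dieudonn\'e crystal, not a pointwise one. In the paper, condition (1) enters only at the very end, as a hypothesis of Theorem \ref{the 2nd step} quoted from \cite{Xia3}. Second, you misidentify why the finite \'etale cover is needed: the idempotents are already rational over $\Q_{p^f}$ by hypothesis (3); the cover is required to kill descent obstructions in $H^2_{\mathrm{et}}(\tilde C,\mu_2)$ and $H^1_{\mathrm{et}}(C,\mu_2)$ to writing each rank-three summand $\cW_i$ as $S^2\cV_i$ for a rank-two $F^f$-isocrystal $\cV_i$ compatibly with the connection and the Frobenius (Propositions \ref{bundle}, \ref{connection}, \ref{F^f-isocrystal}). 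Your proposed lifting argument via Grothendieck--Messing and integral canonical models would in effect have to reprove the content of \cite{Xia3}; as written it presupposes the Mumford decomposition of the crystal that the first half of your argument never establishes.
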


\subsection{Overview of the proof}

In view of the main theorem in our previous paper \cite{Xia3}, it is enough to prove the following result. 
\begin{prop} \label{the first step}
Notations as \ref{main thm}. Under the conditions (2) and (3) of Theorem \ref{main thm}, there exists a finite \'etale covering $C'$ of $C$, such that the pull-back $\cE'$ of $\cE$ has a tensor decomposition: 
\[\cE' \cong \cV_1 \otimes \cV_2 \otimes \cV_3\]
 as $F^f$-isocrystals where $\cV_i$ are rank 2 isocrystals over $C'$.
\end{prop}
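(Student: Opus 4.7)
The strategy is to use condition~(3) to canonically split $\wedge^2\cE$ into four $F^f$-isocrystal summands, combine the principal polarization with condition~(2) to pick out a distinguished rank-one summand, and finally reverse-engineer the tensor decomposition $\cE'\cong\cV_1\otimes\cV_2\otimes\cV_3$ after a finite \'etale base change $C'\to C$.

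Step~1 (splitting). Condition~(3) produces four orthogonal primitive idempotents in the \'etale algebra $\Q_{p^f}^{\times 4}\cong \G((C/W(k))_\cri,\scr{E}nd(\wedge^2\cE))^{F^f}\otimes\Q_{p^f}$, each $F^f$-invariant. Applying them to $\wedge^2\cE$ gives a canonical decomposition of $F^f$-isocrystals
\[ \wedge^2\cE \;=\; \cF_0\oplus\cF_1\oplus\cF_2\oplus\cF_3, \]
where each $\cF_i$ is endo-simple (its $F^f$-invariant endomorphism algebra is $\Q_{p^f}$), hence isoclinic over a dense open of $C$, and the $\cF_i$'s are pairwise non-isomorphic.

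Step~2 (rank identification). The principal polarization yields a canonical $F$-stable sub-line $\cL=\cO(-1)\hookrightarrow\wedge^2\cE$ spanned by the polarization class $\omega$ (with $F\omega=p\omega$), which must lie in a single summand; relabel it $\cF_0$. The vector $\omega\wedge\omega\in\wedge^4\cE$ has $F^f$-eigenvalue $p^{2f}$, and by condition~(2) it spans the unique such eigenline. Combined with the ordinary-slope spectrum at the point of condition~(1), this forces $\cF_0=\cL$ of rank~$1$ and each $\cF_i$ ($i\geq 1$) of rank~$9$, matching the rank pattern of the expected Mumford formula
\[ \wedge^2(\cV_1\otimes\cV_2\otimes\cV_3)\cong\bigotimes_{r=1}^{3}\wedge^2\cV_r \;\oplus\; \bigoplus_{i=1}^{3}\wedge^2\cV_i\otimes\bigotimes_{r\neq i}S^2\cV_r. \]

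Step~3 (rank-two factor reconstruction) is the main obstacle. Under a hypothetical tensor decomposition one has $\cF_i\otimes\cF_0^{-1}\cong\mathfrak{sl}(\cV_j)\otimes\mathfrak{sl}(\cV_k)$ for $\{i,j,k\}=\{1,2,3\}$; reversing this, we aim to exhibit three pairwise commuting $F^f$-stable Azumaya subalgebras $\cA_1,\cA_2,\cA_3\cong M_2$ inside $\End(\cE)$, which via the polarization sits in $(S^2\cE\oplus\wedge^2\cE)(-1)$, with $\cA_1\otimes\cA_2\otimes\cA_3\cong\End(\cE)$. Passing to a finite \'etale cover $C'\to C$ then trivializes each Azumaya algebra and splits any residual permutation of the three factors by Frobenius, producing rank-two $F^f$-isocrystals $\cV_i$ on $C'$ with $\cE'\cong\cV_1\otimes\cV_2\otimes\cV_3$. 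The hard part is (a)~promoting the abstract decomposition of $\wedge^2\cE$ to honest pairwise commuting Azumaya subalgebras of $\End(\cE)$, and (b)~simultaneously trivializing them after an \'etale base change; hypothesis~(1) at an ordinary point supplies the slope-theoretic and semisimplicity control needed for~(a).
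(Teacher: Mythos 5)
Your plan correctly identifies the outer shell of the argument (use condition (3) to split $\wedge^2\cE$ into four summands, use the polarization and condition (2) to isolate the rank-one piece, and pass to a finite \'etale cover to extract rank-two tensor factors), but the content of the proof is concentrated precisely in the steps you defer, and one of them cannot be carried out as stated. Nothing in your Steps 1--2 shows that the three rank-nine summands factor as tensor products of rank-three objects; matching the ranks $1+9+9+9$ against the Mumford pattern is not an argument. This is where the paper does real work: it first proves that $\cE$ is an \emph{irreducible} $F$-isocrystal (equivalent to simplicity of $X/C$, a substantial theorem proved via Barsotti--Tate groups, the filtrations $\cT_n=\cI_n$, and Zarhin's trick), deduces that the Tannakian group $G_E$ is reductive, and then classifies all complex simple Lie algebras with an $8$-dimensional irreducible symplectic representation to conclude that $(\frak{g}_E)^{ss}_\C\cong\frak{sl}(2)^{\times 3}$ acting by $E_1\otimes E_2\otimes E_3$. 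Only after this does the decomposition $\wedge^2 E\cong \Q_{p^f}\oplus W_1\otimes W_2\oplus W_1\otimes W_3\oplus W_2\otimes W_3$ with $\dim W_i=3$ become available over $\Q_{p^f}$. Your appeal to hypothesis (1) for ``semisimplicity control'' is both insufficient (ordinarity at one closed point does not give irreducibility of $\cE$) and out of bounds, since the proposition assumes only conditions (2) and (3). Relatedly, your claim that an endo-simple summand is ``isoclinic over a dense open'' is a non sequitur and is not needed.

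Your Step 3 --- producing three pairwise commuting $F^f$-stable Azumaya subalgebras of $\End(\cE)$ and trivializing them after an \'etale base change --- is asserted rather than proved, and it is the other half of the paper's work. The paper instead equips each $W_i$ with a symmetric form, interprets the existence of $\cV_i$ with $S^2\cV_i\cong\cW_i$ as the splitting of the associated conic bundle, locates the obstruction in $H^2_{\mathrm{et}}(\tilde C,\mu_2)$, then separately descends the connection (a delicate finiteness argument for $\sum_n\nabla^{(n)}(N)$, with no further obstruction) and the Frobenius (a further obstruction in $H^1_{\mathrm{et}}(C,\mu_2)$), each killable by a $2\!:\!1$ \'etale cover; finally it corrects the resulting $\cV_1\otimes\cV_2\otimes\cV_3$ by a rank-one twist $\cL$ to recover $\cE'$ on the nose. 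Your sketch contains no mechanism for constructing the $\cA_i$, no reason a \emph{finite} \'etale cover of a curve suffices to trivialize them compatibly with $\nabla$ and $F$, and no treatment of the residual rank-one discrepancy; as written, Step 3 restates the goal rather than proving it.
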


To prove \ref{the first step}, note under the Tannakian formalism, the $F^f$-crystal $\cE$ gives rise to a linear algebraic group $G_E$ acting on an 8-dimensional vector space $E$. The group $G_E$ is reductive, which follows from a general result (see \ref{simple and irreducible}).

Conditions (2) and (3) translate to 
\[ \dim_{B(k_0)}(\wedge^4 E)^{G_E}=1, \End(\wedge^2 E)^{G_E} \cong \Q^{\times 4}_{p^f}. \]

Using the classification of representations of simple Lie algebras, we can show $G_E$ geometrically has the shape of $SL(2)^{\times 3}$ and $E$ geometrically corresponds to a tensor product of three copies of standard representations of $SL(2)$.

The obstruction to descend the result to the base field is a certain cohomology class (see \ref{bundle},\ref{connection} and \ref{F^f-isocrystal}). It is where we have to make a finite \'etale cover of $C$. 

\subsection{Structure of the paper}

In Section \ref{example}, we demonstrate the theory of this paper is non-empty by showing examples satisfy all the conditions in \ref{main thm}.

Section \ref{prerequisite and notations} contains some preliminary results and basic notations.

In Section \ref{a glance}, we mainly prove \ref{simple and irreducible}, that over a proper smooth curve, a simple family of abelian varieties corresponds to a reductive group and an irreducible representation in the Tannakian formalism. 

Section \ref{the choice} and \ref{the structure} are mainly devoted to show how to choose the finite \'etale covering to kill the obstruction and obtain the tensor decomposition.
We finish the proof of \ref{main thm} in Section \ref{the end}.

In Section \ref{a variation},  we prove a variation of \ref{main thm} in which we drop the action of Frobenius.

\subsection*{Acknowledgements:} I thank my advisor Johan de Jong for suggesting this project. Without his generous help and guidance, this paper would not exist.

\section{The crystals associated to a Barsotti-Tate group}
 We explain the concepts involved in Theorem \ref{main thm} and state some results on crystals and Barsotti-Tate groups which we will use later. 
\subsection{}
The curve $C/k$ in \ref{main thm} has a natural crystalline site $\cri(C/W(k))$. The higher direct image $\cE=R^1\pi_{\cri,*}(\cO_X)$ of the abelian scheme $\pi: X\ra C$ is a crystal in locally free sheaves. 

\begin{defn}
A \textit{Dieudonne crystal} $\cF$ in $\cri(C/W(k))$ satisfies 
\begin{enumerate}
\item it is a crystal in locally free sheaves,
\item there exist $F: \cF^\sigma \ra \cF$ and $V: \cF \ra \cF^\sigma$ such that $F\circ V=p. \mathrm{Id}$, $V \circ F=p. \mathrm{Id}.$
\end{enumerate}
\end{defn}

The crystal $\cE$ is further a Dieudonne crystal. 

Choose an arbitrary lifting $\tilde C$ of $C$ to $W(k)$. The category of crystals in locally free sheaves in $\cri(C/W(k))$ is equivalent to the category of vector bundles with an integrable connection on $\tilde C$. In particular, choosing an open affine subset $U\subset C$ and a lifting $\tilde U$ of $U$, we have a lifting of Frobenius $\tilde \s$ over $\tilde U$. 

In the rest of the paper, by crystal, we mean a crystal in locally free sheaves. Therefore an $F$-isocrystal on $\cri(U/W(k))$ corresponds to a triple $(M, \nabla, F)$, a sheaf of module on $\tilde U$ with an integrable connection and Frobenius $F: M^{\tilde \s} \ra M$.

\subsection{} 
A Barsotti-Tate (BT) group $G$ over $C$ is a $p$-divisible, $p$-torsion and the $p$-kernel is a finite locally free group scheme.  Each $p^i$-kernel $G[p^i]$ is a truncated BT group. By \cite{BBM}, the crystalline Dieudonne functor $\DD(G)=\scr{E}xt^1(\underline{G}, \cO_C)$ associates a Dieudonne crystal over $\cri(C/W(k))$ to a BT group.  And $\DD(G[p])=\DD(G)_C$ admits a filtration 
\[0\ra \o_G \ra \DD(G)_C\ra \a_G \ra 0.\] 

In the context of the Theorem \ref{main thm}, $\cE=\DD(X[p^\infty])$ and the filtration on $\cE_C$ is just the Hodge filtration of $X/C$: $\o=\pi_* \O_{X/C}, \a=R^1\pi_*(\cO_X)$. In particular, $\cE_C$ has the Gauss-Manin connection and it induces a $\cO_C$-linear map, called Higgs field: 
\[\theta: \o \ra \a\otimes \O^1_C\] which is related to Kodaira-Spencer map. The Higgs field can be defined alternatively: 
\[\xymatrix{
\o \ar[r] \ar@{-->}[drr] & \cE_C \ar[r] \ar[d]^\nabla & \a \\
&\cE_C \otimes \O^1_C \ar[r] & \a\otimes \O^1_C.
}
\] The other is from the long exact sequence of 
\[0\ra \pi^*\O_C \ra \O_X \ra \O_{X/C} \ra 0\] and the boundary map $\cdots \ra \pi_*\O_{X/C} \xrightarrow{\partial} R^1\pi_* \cO_X \otimes \O^1_C \ra \cdots$ gives the Higgs field.
Condition (3) in \ref{main thm} just means the map $\theta$ is isomorphic.

\begin{thm}(\cite[Main Theorem 1]{deJ} ) \label{equiv on curve} The category of Dieudonne crystals over $\cri(C/W(k))$ is anti-equivalent to the category of BT groups over $C$. 
\end{thm}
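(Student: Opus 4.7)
The plan is to establish anti-equivalence in two steps: full faithfulness and essential surjectivity. The contravariant functor $\DD(G) = \scr{E}xt^1(\underline{G}, \cO_C)$ is already known by \cite{BBM} to land in the category of Dieudonne crystals, so the content is to show that $\DD$ induces a bijection on Hom-sets and that every Dieudonne crystal is in its essential image. Since $C$ is smooth, one may Zariski-locally fix an affine open $U \subset C$ together with a lift $\tilde U/W(k)$ and a Frobenius lift $\tilde\sigma$; over $\tilde U$ a Dieudonne crystal becomes a quadruple $(M, \nabla, F, V)$ with $FV = VF = p$, and an \'etale descent argument reduces the global problem to this affine setting.

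For full faithfulness, take BT groups $G_1, G_2$ over $U$ and a morphism $\varphi\colon \DD(G_2)\to \DD(G_1)$; the plan is to construct a unique $\psi\colon G_1 \to G_2$ with $\DD(\psi) = \varphi$. First specialize at each closed point $x \in U(k)$, where classical Dieudonne theory over the perfect field $k$ produces a unique $\psi_x\colon (G_1)_x \to (G_2)_x$. Next propagate $\psi_x$ along infinitesimal thickenings via Grothendieck--Messing: deformations of morphisms of BT groups across a PD thickening are classified by morphisms of the Dieudonne crystals respecting the Hodge filtration on the source, a condition that any morphism of crystals satisfies by functoriality. This yields $\psi$ over the formal completion $\hat{\cO}_{U,x}$; Artin approximation then produces a representative over an \'etale neighborhood of $x$, and uniqueness lets these candidates glue to a global $\psi$ over $U$.

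For essential surjectivity, given a Dieudonne crystal $\cF$ on $\cri(U/W(k))$, the plan is to construct a BT group $G/U$ with $\DD(G) \cong \cF$ by a local-to-global argument. At each closed point $x$, Cartier/display theory (Zink's theory of displays when $p>2$) produces a BT group over $\hat{\cO}_{U,x}$ whose Dieudonne crystal recovers $\cF$ formally. The harder step is promoting this formal BT group to an honest BT group over a Zariski open around $x$: one algebraizes the formal object using the Frobenius structure to control convergence, and then the full faithfulness of the previous step ensures that the resulting local BT groups glue consistently on overlaps, the cocycle condition following automatically from uniqueness.

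The main obstacle is precisely this algebraization step. Classical Dieudonne theory is complete only over perfect fields or complete local Noetherian rings; producing a BT group directly from Dieudonne data on a smooth affine scheme requires nontrivial analytic input. The technical heart of \cite{deJ} is a rigid-analytic argument carried out on the rigid generic fibre of $\tilde U$, which bridges this gap by using the Frobenius structure to patch formally defined BT groups into an algebraic one over $U$. Once this local existence is established, the remaining content of the theorem --- the fully faithful direction, the cocycle verification, and the final gluing --- is essentially a formal consequence of the functoriality of $\DD$.
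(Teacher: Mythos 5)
This statement is not proved in the paper at all: it is quoted verbatim as \cite[Main Theorem 1]{deJ}, and the author uses it as a black box. So there is no internal argument to compare your proposal against; the only meaningful question is whether your sketch would stand on its own as a proof, and it would not. You have correctly identified the architecture of de Jong's argument --- full faithfulness via pointwise classical Dieudonn\'e theory plus deformation along PD thickenings, and essential surjectivity via a local construction followed by algebraization --- and you have correctly located the hard point in the algebraization of formal BT groups over a non-perfect, non-complete base. But at exactly that point your proposal says that ``the technical heart of \cite{deJ} is a rigid-analytic argument,'' i.e.\ it defers the decisive step to the theorem being proved. As written this is a structured summary of why the theorem is difficult, not a proof of it.

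There is also a concrete error in the full-faithfulness step. You assert that the Grothendieck--Messing compatibility with the Hodge filtration is ``a condition that any morphism of crystals satisfies by functoriality.'' That is false on a nontrivial PD thickening: the crystal $\DD(G)$ determines the Hodge filtration only on the reduced base (where it can be read off from $F$ and $V$), while on a thickening the filtration is extra data attached to a chosen lift of $G$, and a morphism of Dieudonn\'e crystals has no a priori reason to carry one chosen lift's filtration into the other's. This is precisely the obstruction that makes full faithfulness of $\DD$ a theorem rather than a formality (already over a complete local base), and de Jong's proof does not dispose of it by functoriality. If you want to present this result, the honest course --- and the one the paper takes --- is simply to cite \cite[Main Theorem 1]{deJ}.
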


\section{Example} \label{example}

To justify that we are not proving a vacuous theorem, in this section, we show some reductions of a Mumford curve satisfy all the conditions in Theorem \ref{main thm}.

First we quote a theorem from \cite{JX}: let $k$ be an algebraically closed field of characteristic $p$.
\begin{thm} (\cite[ Theorem 1.2]{JX}) \label{my thm}

For infinitely many prime $p$, there exists a family of principally polarized abelian fourfolds over a smooth proper curve $\tilde f: \tilde X \ra \tilde C$ over $W(k)$ such that 
\begin{enumerate}
\item $(\tilde X \xrightarrow{\tilde f} \tilde C)\otimes \C $ is a Mumford curve,
\item the reduction $X \ra C$ of $\tilde X \ra \tilde C$ at $k$ is generically ordinary,
\item the Dieudonne crystal $\cE\cong \cV \otimes \cT$ where $\cV$ is a Dieudonne crystal of rank 2 with maximal Higgs field and $\cT$ is a unit root crystal of rank 4,
\end{enumerate}
\end{thm}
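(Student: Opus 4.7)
My plan is to construct the Mumford family explicitly, obtain good reduction at infinitely many primes, verify generic ordinarity, and extract the tensor decomposition from the Mumford--Tate structure. Following Mumford's original construction, choose Shimura data $(G, X)$ where $G$ is a suitable $\Q$-form of a group whose derived subgroup is an $\overline{\Q}$-form of $SL_2^3$, with $G_\R$ having exactly one non-compact factor. A half-spin representation gives a symplectic embedding $(G,X) \hookrightarrow (GSp_8, \mathcal{H}_4)$ and realises the Mumford curve $\tilde f_K : \tilde X_K \to \tilde C_K$ as a universal family of principally polarized abelian fourfolds over some number field $K$. Spreading out over $\Spec \cO_K[1/N]$ gives good reduction at all primes $\mathfrak{p} \nmid N$, and base change along a completed unramified extension yields a family over $W(k)$ for infinitely many $p$.

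For generic ordinarity, I would fix a CM point $\tilde c \in \tilde C(K)$ corresponding to an abelian fourfold $A_0$ with complex multiplication by an order $\cO$ in a CM field $L$ with CM type $\Phi$. By the Shimura--Taniyama formula, for primes $p$ splitting completely in the reflex field of $(L, \Phi)$ the reduction of $A_0$ at any prime above $p$ is ordinary. By Chebotarev there are infinitely many such $p$, and ordinarity is an open condition on $C$, so the resulting family $X \to C$ is generically ordinary.

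For the tensor decomposition, the Mumford--Tate group acts on the rational Betti cohomology of each fibre through the half-spin representation, which over $\overline{\Q}$ is the tensor product $V_1 \otimes V_2 \otimes V_3$ of the three standard representations of the factors of $SL_2^3$. This motivic decomposition transfers to the crystalline realisation, giving $\cE \otimes \Q \cong \cV_1 \otimes \cV_2 \otimes \cV_3$ as $F^f$-isocrystals over $C$, with each $\cV_i$ of rank two. For primes $p$ selected among those of Step 2 which additionally split completely in the centre $F$ of the defining algebra as $\mathfrak{p}_1 \mathfrak{p}_2 \mathfrak{p}_3$, with $\mathfrak{p}_2, \mathfrak{p}_3$ corresponding to the two compact archimedean factors of $G_\R$, the local unitary structure forces $\cV_2$ and $\cV_3$ to be unit root, while the non-compact factor forces $\cV_1$ to carry the entire Hodge filtration of $\cE$. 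Setting $\cV := \cV_1$ and $\cT := \cV_2 \otimes \cV_3$ gives the stated decomposition, and $\cV$ inherits maximal Higgs field from the fact that the Kodaira--Spencer map for the Mumford family is an isomorphism on the non-compact factor.

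The main obstacle is upgrading the rational tensor decomposition of $F^f$-isocrystals to a decomposition of integral Dieudonn\'e crystals compatible with both Frobenius and Verschiebung. I would handle this by invoking the canonical splitting of the slope filtration on the ordinary locus to single out the unit-root piece $\cT$, combined with Theorem \ref{equiv on curve} and Serre--Tate canonical-lift theory to control the integral structure of the complementary piece $\cV$. The simultaneous prime-density requirement then reduces to a routine Chebotarev computation over the compositum of $F$, $L$, and the splitting field of the defining quaternion algebra.
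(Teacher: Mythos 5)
First, a structural point: the paper does not prove this statement. Theorem \ref{my thm} is quoted verbatim from \cite[Theorem 1.2]{JX} and is used as a black box in Section \ref{example} to show that the hypotheses of Theorem \ref{main thm} are non-vacuous. So there is no proof here to compare yours against; all one can judge is whether your sketch would establish the cited result on its own.

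On that score there are two genuine gaps. The lesser one is the step where you assign the primes $\mathfrak{p}_1,\mathfrak{p}_2,\mathfrak{p}_3$ of the totally real cubic field above a split $p$ to the archimedean places of that field (``$\mathfrak{p}_2,\mathfrak{p}_3$ corresponding to the two compact archimedean factors''). There is no canonical bijection between the finite places above $p$ and the real places; what singles out the $p$-adic factor carrying the nontrivial Hodge filtration is the Hodge cocharacter, which is defined over the reflex field, together with the chosen prime of the base number field at which you reduce. This is repairable, but as written your argument does not actually determine which tensor factors are unit root. The more serious gap is the integral statement. The theorem asserts $\cE\cong\cV\otimes\cT$ as \emph{crystals}, with $\cT$ unit root of rank $4$ over all of $C$, whereas your mechanism for producing $\cT$ --- the canonical splitting of the slope filtration, plus Serre--Tate theory --- only lives over the ordinary locus, and $C$ is only \emph{generically} ordinary: at a non-ordinary point the fibre of $\cE$ is isoclinic of slope $1/2$ and admits no rank-$4$ unit-root subobject. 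Moreover $\cT$ is a tensor factor of $\cE$, not a subcrystal; even on the ordinary locus the unit-root subcrystal of $\cE$ is $\cV^{0}\otimes\cT$ for the unit-root line $\cV^{0}\subset\cV$, so the slope filtration does not hand you $\cT$ directly. Passing from the isocrystal decomposition (which your Mumford--Tate/comparison argument plausibly yields) to an integral tensor decomposition of Dieudonn\'e crystals valid across the non-ordinary points is exactly the content that needs to be supplied, and the proposal does not supply it.
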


We use $\tilde X \xrightarrow{\tilde f} \tilde C$ to denote the Mumford curve defined over the Witt ring $W(k)$(see \cite[Section 2.5]{JX}).  Then $\cE$ corresponds to $R^1\tilde f_*(\O^._{\tilde X/\tilde C}) $. 
\begin{prop} (\cite{JX})
The unit root crystal $(\cT, F_T)$ has the tensor decomposition as crystals: $\cT \cong \cV_2 \otimes \cV_3$ and either of the following two cases is true \begin{enumerate}
\item  $\cV_2, \cV_3$ are $F$-crystals and the isomorphism respects the Frobenius.
\item  $F_T=F_2 \otimes F_3$ where $F_2: \cV^\s_2 \ra \cV_3$ and $F_3: \cV^\s_3 \ra \cV_2$.
\end{enumerate} 
\end{prop}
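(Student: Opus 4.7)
The plan is to leverage Katz's equivalence between unit root $F$-isocrystals on $C$ and continuous $p$-adic representations of the \'etale fundamental group $\pi_1^{\et}(C,\bar c)$. Under this equivalence, tensor products of unit root $F$-isocrystals correspond to tensor products of representations, so the desired decomposition $\cT \cong \cV_2 \otimes \cV_3$ translates to the factorization of the associated $4$-dimensional $p$-adic representation $\rho_T: \pi_1^{\et}(C) \to GL_4(\bar\Q_p)$ as an external tensor product of two $2$-dimensional representations. My goal then becomes to exhibit such a factorization after a suitable finite \'etale cover, and then to analyze how the geometric Frobenius of $C$ interacts with the two factors.

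To produce the tensor factorization, I would exploit the Mumford curve lift $\tilde X \to \tilde C$ over $W(k)$. On the complex fiber, Mumford's construction forces the Hodge group to be a quotient of $SL_2^{\times 3}$, acting on $R^1 \tilde f_*(\Q)$ by the tensor product of three copies of the standard representation. Pulling back to a finite \'etale cover of $\tilde C$ over which the three $SL_2$-factors are individually defined splits the complex local system as $L_1 \otimes L_2 \otimes L_3$. Compatibility with the decomposition $\cE \cong \cV \otimes \cT$ on the reduction forces $\cV$ to correspond to the ``Mumford'' factor carrying the nontrivial Hodge filtration (say $L_1$), while $\cT$ corresponds to $L_2 \otimes L_3$.

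I would next transfer this decomposition to the crystalline side. Because $\cT$ is the unit root part of $\cE$, it is the Dieudonn\'e crystal of the maximal \'etale quotient of $X[p^\infty]$ on the ordinary locus, and its associated $\pi_1^{\et}$-representation via Katz is precisely the rational $p$-adic Tate module of that quotient. Under the crystalline-\'etale comparison for ordinary BT groups (a relative form of Tate's theorem), this coincides with the $p$-adic realization of the local system $L_2 \otimes L_3$ restricted to the reduction. Translating back via Katz yields a decomposition $\cT \cong \cV_2 \otimes \cV_3$ as unit root $F$-isocrystals on a suitable finite \'etale cover of $C$.

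The hard part is the descent back to $C$ together with the Frobenius bookkeeping. After the cover kills the descent obstruction for the individual rank-$2$ factors, the geometric Frobenius of $C$ permutes the pair $\{\cV_2,\cV_3\}$, and since the two remaining $SL_2$-factors of the Mumford group are a priori interchangeable this permutation is at worst a $\Z/2\Z$-cocycle. If Frobenius preserves each factor, the $\cV_i$ descend to honest $F$-crystals on $C$ and the tensor decomposition is Frobenius-equivariant, yielding case~(1). If Frobenius interchanges them, only the ``swapped'' maps $F_2: \cV_2^\s \to \cV_3$ and $F_3: \cV_3^\s \to \cV_2$ are defined over $C$, and their tensor product reconstructs $F_T$, yielding case~(2). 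Ruling out any further obstruction beyond this $\Z/2\Z$-ambiguity, and verifying the compatibility of the two Frobenius conventions with the Katz dictionary, is the main technical step I expect to confront.
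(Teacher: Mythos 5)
You should first be aware that the paper itself contains no proof of this proposition: it is quoted from \cite{JX} as a known property of the particular reductions of Mumford curves constructed there, and is used in Section \ref{example} only to check that the hypotheses of Theorem \ref{main thm} are nonvacuous. Your proposal can therefore only be measured against the argument one expects in \cite{JX}, which proceeds through the action of the totally real cubic field $F$ and the quaternion algebra $D$ entering Mumford's construction on the Dieudonn\'e module of the family.

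Measured that way, there are two genuine gaps. First, you obtain the tensor factorization only after passing to a finite \'etale cover and you explicitly defer the descent back to the base (``the main technical step I expect to confront''), whereas the proposition asserts $\cT \cong \cV_2 \otimes \cV_3$ as crystals over $C$ itself; without that descent the statement is not proved. Second, and more structurally, you locate the source of the dichotomy between cases (1) and (2) in a $\Z/2\Z$ monodromy or descent cocycle on the curve. In fact the decomposition comes from the idempotents of $F \otimes \Q_p$ (equivalently of $D \otimes \Q_p$) acting on the Dieudonn\'e crystal: the places of $F$ above $p$ cut $\cE$ into the rank-$2$ piece $\cV$ carrying the nontrivial Hodge filtration and the two unit-root pieces $\cV_2, \cV_3$, and the $\sigma$-linear Frobenius permutes these pieces exactly as Frobenius at $p$ permutes the places of $F$ above $p$. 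Whether one lands in case (1) (both unit-root places fixed, e.g.\ $p$ totally split) or case (2) (the two places swapped, e.g.\ a quadratic place above $p$) is thus a purely local arithmetic fact about the splitting of $p$ in $F$ --- consistent with the paper's immediate deduction that $\cT \cong \cV_2 \otimes \cV_3$ as $F^2$-isocrystals --- and it is not an obstruction class on $C$ that any finite \'etale cover could kill or modify. Your Katz-correspondence framework is a reasonable language for the unit-root part, but as written it never invokes the endomorphism structure that actually produces $\cV_2$ and $\cV_3$ and decides which case occurs.
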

Therefore $\cT \cong \cV_2 \otimes \cV_3$ as $F^2$-isocrystals.

For Condition (3) in \ref{main thm}, the self product of the polarization gives 
\[\dim_{B(k_0)}\G(\tilde C, R^4\tilde f_*(\O^._{\tilde X/\tilde C})(2f))^{F^f} \otimes B(k_0) \geq 1\] for any integer $f$.

Base change to algebraically closed field $\bar B(k_0)$ and $\cE$ becomes a local system, with algebraic monodromy $G=SL(2)^{\times 3}$ (\cite{JX}, proposition 2.4). Hence $\G(\tilde C, \wedge^4 R^1 \tilde f_*(\O^._{\tilde X/\tilde C}))\otimes \bar B(k_0)=1$. We have 
\[\dim_{B(k_0)} \G(\tilde C, R^4\tilde f_*(\O^._{\tilde X/\tilde C}))^{F^f-q^{2f}} \otimes B(k_0) \leq 1.\]
Therefore (3) in \ref{main thm} is satisfied.

For the maximal Higgs field, since $\cE \cong \cV \otimes \cV_2 \otimes \cV_3$ as $F^{2f}$-isocrystals for any natural number $f$ and $\cV_1, \cV_2, \cV_3$ are irreducible as isocrystals, 
\[\wedge^2 \cE=S^2 \cV_2 \otimes S^2 \cV_3 \oplus S^2 \cV \otimes S^2 \cV_2 \oplus S^2 \cV \otimes S^2 \cV_3 \oplus \cO_C\] 
as direct sum of simple isocrystals. 
 Thereby the algebra $\End(\wedge^2 \cE)^{F^f}$ is isomorphic to $\Q^{\times 4}_{p^f}$ for some $f$. 

For Condition (1) in \ref{main thm}, it follows from (2) in Theorem \ref{my thm}.
%let $X/C$ be the special fiber of $\tilde X/\tilde C$ and then $\DD(X/C)=\cE$. Through the crystalline Dieudonne theory, it suffices to show as a F-isocrystal, $\cE$ is irreducible. Again by (\cite{JX}, proposition 2.4), the Tannakian duality of $\cE$ over $\C$ is a geometrically irreducible representation of $SL(2)^{\times 3}$. Therefore $X/C$ is simple. 

For Condition (2), since the universal family over the Shimura curve $ \tilde C$ has maximal Higgs field (see \cite[Theorem 0.9]{Moller}), the Higgs field of the special fiber $X/C$ is also maximal.

Therefore the special reduction of Mumford curve at $k$ satisfies all the conditions of Theorem \ref{main thm}.

\section{Prerequisite and notations} \label{prerequisite and notations}

\subsection{} We recall some general facts on Tannakian categories and then we focus on the Tannakian category of $F^f$-isocrystals, denoted as $F^f$-isoc$(C)$.

\begin{defn} Let $L$ be a field of characteristic 0.
A Tannakian category $T$ (over $L$) is a $L$-linear neutral rigid tensor abelian category with an exact fiber functor $\o: T\ra \rm{Vect}_L $. 
\end{defn}

\begin{thm} (\cite[Theorem 2.11]{Deli}) \label{Deli}
For any Tannakian category $T$, there exists an $L$-algebraic group $G$ such that $T$ is equivalent to $\Rep_L(G)$ as tensor categories. 
\end{thm}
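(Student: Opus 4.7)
The plan is to realize $G$ as the affine group scheme representing tensor automorphisms of the fiber functor $\omega$, and then upgrade $\omega$ to a tensor equivalence $T \simeq \Rep_L(G)$. This is the classical Tannakian reconstruction; my sketch follows the Saavedra--Deligne approach based on the coend construction of a Hopf algebra.

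First, for each commutative $L$-algebra $R$ I would consider the functor
\[ R \longmapsto \underline{\Aut}{}^\otimes(\omega)(R) \]
whose value is the group of $\otimes$-natural automorphisms of $\omega_R := \omega \otimes_L R$. Forgetting the tensor structure for a moment, a coend computation produces a coalgebra
\[ A \;=\; \colim_{X\in T}\, \omega(X)^\vee \otimes_L \omega(X), \]
together with a lift of $\omega$ to an equivalence between $T$ and the category of finite-dimensional right $A$-comodules. The tensor product on $T$ and the tensor compatibility of $\omega$ next endow $A$ with an algebra multiplication, and the symmetry of $\otimes$ forces this multiplication to be commutative; rigidity of $T$ produces an antipode. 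Thus $A$ becomes a commutative Hopf algebra, and $G := \Spec A$ is the desired affine group scheme, whose category of finite-dimensional representations is precisely the category of finite-dimensional $A$-comodules.

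Putting the pieces together yields a tensor functor $T \to \Rep_L(G)$ sending $X$ to $\omega(X)$ with its tautological $G$-action. Fully faithfulness follows directly from the coend definition of $A$: morphisms $X\to Y$ in $T$ correspond by construction to $A$-comodule maps $\omega(X)\to \omega(Y)$. For essential surjectivity, the key reduction is to show that every finite-dimensional $A$-comodule is a subquotient of a finite direct sum of ``matrix-coefficient'' comodules $\omega(X)$; combined with exactness of $\omega$ and compatibility with duals and direct sums, this realizes an arbitrary representation of $G$ as $\omega(Y)$ for some $Y \in T$.

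I expect the main obstacle to be essential surjectivity together with verifying that $A$ really is a (commutative) Hopf algebra rather than merely a bialgebra. Exactness of the fiber functor is used in a critical way: it is what transports sub-comodule lattices back to sub-object lattices in $T$, so that a sub-comodule of $\omega(X)$ in $\Rep_L(G)$ is automatically of the form $\omega(X')$ for a subobject $X'\subset X$ in $T$. Without exactness one would only obtain a comonadic description rather than a true equivalence. Once these facts, along with the coherence diagrams for the Hopf structure, are established, the theorem follows from standard categorical manipulations.
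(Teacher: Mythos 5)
The paper does not prove this statement at all: it is quoted verbatim from Deligne--Milne (\cite[Theorem 2.11]{Deli}) and used as a black box, so there is no internal proof to compare against. Your sketch is a correct outline of the standard reconstruction argument in that reference --- building the commutative Hopf algebra $A$ as a coend of matrix coefficients, setting $G=\Spec A$, and using exactness and faithfulness of $\omega$ to upgrade the comparison functor to a tensor equivalence --- so it matches the source's approach; the only caveats worth flagging are that faithfulness of $\omega$ (not just exactness) is what drives full faithfulness, and that the resulting $G$ is in general an affine group scheme rather than an algebraic group of finite type unless $T$ is tensor-generated by a single object.
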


\begin{rmk} \label{the equivalence}
Recall that $\pi$ is defined over finite field $\F_{p^{f_0}}$ and then for any $f$, a multiple of $f_0$, $\s^f$ fixes $\F_{p^{f_0}}$. Thereby in the category of $F^f$-isocrystals over $C$,  $\End(1)=\F_{p^{f_0}}$.  So the $F^f$-isocrystals over $C$ form a neutral Tannakian category over $\Q_{p^f}$ which is equivalent to $\Rep_{\Q_{p^f}}(G_\univ)$ for some group scheme $G_\univ$. 
\end{rmk}

For any finite \'etale covering $C'$ of $C$,  $F^f$-isocrystals over $C'$ also form a neutral Tannakian category, equivalent to $\Rep_{\Q_{p^f}}(G'_\univ)$. Let $\cC$(resp. $\cC'$) denote the category of $F^f$-isocrystals over $C$(resp. $C'$). Then the pullback induces $\cC \ra \cC'$. 

Let 
\[\G=\Aut(C'/C)\]
 be the finite automorphism group.  Then the action of $\G$ on $C'$ induces 
\[\G \ra \Aut^\otimes(\cC'), \g \mapsto \g^*.\] Obviously $1\in \G$ induces the identity on $\cC'$.

The covering $C'\ra C$ satisfies the descent for $F^f$-isocrystals. In other words,
\begin{lemma} \label{the descent}
$\cC$ can be identified as the category of 
\[\biggl\{(X', \{\varphi_\g\}_{\g\in\G})| X'\in \mathrm{ob\,}\cC' , \varphi_\g: \g^* X' \ra X', \varphi_{\g\g'}=\varphi_{\g'}\circ \varphi_{\g}\biggr\}.\]
\end{lemma}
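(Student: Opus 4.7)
The plan is to view this as a Galois descent statement along the finite \'etale cover $C' \to C$, reduced via the equivalence recalled in Section~2 to faithfully flat descent for modules with integrable connection and Frobenius structure on a smooth lift. To set up, I would choose an affine open $U \subset C$ admitting a $W(k)$-lift $\tilde U$ and a lift $\tilde \s$ of Frobenius on $\tilde U$. Since $C' \to C$ is finite \'etale, $U' := U \times_C C'$ lifts uniquely to a finite \'etale $\tilde U' \to \tilde U$, and the uniqueness of lifts of \'etale morphisms produces a unique lift of the $\G$-action to $\tilde U'$ together with a canonical lift of Frobenius on $\tilde U'$ extending $\tilde \s$.

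For full faithfulness, given $E_1, E_2 \in \cC$ with pullbacks $E_1', E_2' \in \cC'$, I would use that $\scr{H}om(E_1, E_2)$ is itself an $F^f$-isocrystal, so a morphism $E_1 \to E_2$ is a horizontal, Frobenius-invariant global section. Ordinary Galois descent recovers such sections over $\tilde U$ as the $\G$-invariant horizontal sections over $\tilde U'$, and Frobenius-invariance is manifestly preserved under the action of $\G$. For essential surjectivity, a descent datum $(X', \{\varphi_\g\}_{\g\in\G})$ translates into a triple $(M', \nabla', F')$ on $\tilde U'$ with a $\G$-equivariant structure; the cocycle condition $\varphi_{\g\g'} = \varphi_{\g'}\circ \varphi_\g$ is precisely the group-action axiom. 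Finite \'etale descent for modules produces $M := (M')^\G$ on $\tilde U$ with $M' \cong M \otimes_{\cO_{\tilde U}} \cO_{\tilde U'}$, and since $\nabla'$ and $F'$ are $\G$-equivariant by construction of the pullback functor $\cC \to \cC'$, they descend uniquely to a connection $\nabla$ and Frobenius $F$ on $M$, giving the desired preimage in $\cC$.

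The step deserving most care is independence from the auxiliary choices of $\tilde U$ and $\tilde \s$ and compatibility across an affine cover of $C$. The cleanest way around this is to bypass the lift altogether and work directly on the crystalline site: a descent datum in the sense of the lemma is nothing but a $\G$-equivariant crystal (with Frobenius) on $\cri(C'/W(k))$, and \'etale descent for the crystalline topos identifies the category of such equivariant crystals with the category of crystals on $\cri(C/W(k))$. The Frobenius structure then descends automatically as a morphism in the descended category, the cocycle condition being exactly the equivariance axiom. Combining this with the two verifications above yields that the pullback functor and the functor assembling a descent datum are mutually quasi-inverse, proving the lemma.
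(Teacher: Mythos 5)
Your overall strategy coincides with the paper's: descend each layer of structure (bundle, connection, Frobenius) by Galois descent along the finite \'etale cover, and your closing suggestion to work directly on the crystalline site is exactly the paper's first option (citing Ogus's general descent theorem for the crystalline topos). However, there is one step you elide that the paper's direct argument is specifically built to handle. The objects of $\cC'$ are $F^f$-\emph{iso}crystals, so the descent isomorphisms $\varphi_\g\colon \g^*X'\to X'$ are morphisms in the isogeny category: on a lift $\tilde U'$ they are isomorphisms $\g^*M'[\frac{1}{p}]\to M'[\frac{1}{p}]$ and need not preserve the integral lattice $M'$. Consequently ``finite \'etale descent for modules produces $M:=(M')^\G$ with $M'\cong M\otimes_{\cO_{\tilde U}}\cO_{\tilde U'}$'' does not apply as stated: you have no $\G$-equivariant structure on $M'$ itself, only on $M'[\frac{1}{p}]$, so what you get by descent is a bundle with connection over the generic fibre, not yet a crystal over $C$. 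This is precisely why the paper, after descending rationally, takes the intersection $V'\cap W$ of the descended rational bundle with an integral model and then passes to the double dual (using that $\tilde C$ is regular of dimension $2$, so reflexive modules are locally free) to recover an honest crystal in locally free sheaves.

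The gap is repairable along the same lines within your setup: replace $M'$ by the $\G$-stable lattice $\sum_{\g\in\G}\varphi_\g(\g^*M')$ (a finite sum, hence still coherent and commensurable with $M'$), then descend and double-dualize. A smaller point: in the essential surjectivity step, the compatibility of $\nabla'$ and $F'$ with the $\varphi_\g$ is not ``by construction of the pullback functor'' --- for an arbitrary descent datum it is part of the hypothesis that each $\varphi_\g$ is a morphism in $\cC'$ (horizontal and Frobenius-equivariant); as phrased your justification is circular, though the intended content is correct.
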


\begin{proof}
The proof is an easy corollary of a more general theorem (\cite[Theorem 4.5]{Ogus}). Or we can directly compute as follows:  it suffices to show any such object $(X', \{\varphi_\g\}_{\g \in \G})$ can descend to $C$. $F$-isocrystal corresponds to  $(W, \nabla_W)$ on $C$ with a local Frobenius on $W_{B(k_0)}$. Obviously $\varphi_\g$ gives a descent datum for vector bundles on  $C'_{B(k_0)}$. Note a flat connection is equivalent to the descent data on the deRham space. Thus the \'etale covering $C' \ra C$ also satisfies the descent for connections. Thereby we have a bundle with connection $(V', \nabla_{V'})$ on $C_{B(k_0)}$. Similarly, locally the Frobenius can be descent to $V'$. Taking the intersection $V' \cap W$ gives a coherent sheaf over $C$ with connection. And taking a double dual gives a locally free sheaf.  
\end{proof}

Then $\cC \ra \cC'$ is just the forgetful functor $\{(X', \{\varphi_\g\}_{\g\in\G})\} \mapsto X'$. 

\begin{prop}
Notations as above, $1\ra G'_\univ \ra G_\univ \ra  \G$ is a exact sequence. 
\end{prop}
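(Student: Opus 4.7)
The plan is to apply the Tannakian dictionary between morphisms of affine $\Q_{p^f}$-group schemes and exact $\Q_{p^f}$-linear tensor functors of their representation categories, together with the standard criteria characterising closed immersions and exact sequences in such terms.

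First, the pullback $\pi^{*}:\cC\to\cC'$ along $\pi:C'\to C$ is an exact $\Q_{p^f}$-linear tensor functor, and Theorem \ref{Deli} produces a morphism $\iota: G'_\univ \to G_\univ$. To show $\iota$ is a closed immersion I invoke the Deligne--Milne criterion: it suffices that every $X'\in\cC'$ be a subquotient of $\pi^{*}Y$ for some $Y\in\cC$. Taking $Y=\pi_{*}X'$, which lies in $\cC$ because $\pi$ is finite \'etale and hence $\pi_{*}$ preserves local freeness, the integrable connection and the Frobenius structure, Galois descent gives
\[\pi^{*}\pi_{*}X' \;\cong\; \bigoplus_{\g\in \G}\g^{*}X',\]
exhibiting $X'$ as a direct summand of $\pi^{*}Y$. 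This gives the injectivity of $\iota$.

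Next I construct the map $\psi: G_\univ \to \G$ as the Tannaka dual of a tensor embedding $\Rep_{\Q_{p^f}}(\G)\hookrightarrow \cC$. Given $\rho:\G\to \Aut(V)$ over $\Q_{p^f}$, take the trivial $F^f$-isocrystal $V\otimes\mathbf{1}_{C'}\in\cC'$ equipped with the descent data $\varphi_{\g}=\rho(\g)$; Lemma \ref{the descent} returns an object of $\cC$. The resulting functor is exact, fully faithful and symmetric monoidal, so Tannaka duality produces $\psi$.

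For exactness at $G_\univ$ I identify the two tensor subcategories of $\cC$ cut out by $\ker\psi$ and by $\iota(G'_\univ)$. In Tannakian terms, $\ker\psi$ corresponds to the subcategory generated by the image of $\Rep_{\Q_{p^f}}(\G)\hookrightarrow\cC$, which concretely consists of objects $(X',\{\varphi_{\g}\})$ with $X'$ a trivial $F^f$-isocrystal on $C'$. On the other hand, $\iota(G'_\univ)\subset G_\univ$ corresponds to the subcategory of $\cC$ on which $G'_\univ$ acts trivially, that is, those objects whose image under $\pi^{*}$ is trivial in $\cC'$. These two conditions coincide, so $\ker\psi=\iota(G'_\univ)$. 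The main obstacle I expect is precisely this last identification, because it must be promoted from an equality of subcategories to an equality of closed subgroup schemes; the cleanest approach is to verify the three hypotheses of the Tannakian exact-sequence criterion (in the style of Esnault--Hai--Sun) for the chain $\Rep_{\Q_{p^f}}(\G)\hookrightarrow\cC\xrightarrow{\pi^{*}}\cC'$, namely full faithfulness of $\Rep(\G)\hookrightarrow\cC$, closed-immersion-ness of $\iota$ (established above), and the fact that an object of $\cC$ with $\pi^{*}$-trivial image comes equipped with a canonical $\G$-action making it lie in the image of $\Rep(\G)$, which is again a direct consequence of Lemma \ref{the descent}.
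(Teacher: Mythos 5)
Your proposal is correct and follows essentially the same route as the paper: the closed immersion is obtained by exhibiting any $X'$ as a direct summand of $\pi^{*}\pi_{*}X'\cong\oplus_{\g\in\G}\g^{*}X'$, and the map to $\G$ comes from identifying the objects of $\cC$ that become trivial on $C'$ with $\Rep(\G)$ via Lemma \ref{the descent}. The only divergence is at exactness in the middle, where you invoke the Esnault--Hai--Sun criterion while the paper argues directly that every representation of $G'_\univ$ extends to one of $\ker(G_\univ\ra\G)$; both reduce to the same descent facts, though if you go the EHS route you should also verify the remaining hypothesis that the maximal trivial subobject of $\pi^{*}V$ descends to a subobject of $V$ in $\cC$, which again follows from Galois descent.
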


\begin{proof}
For any object $\cE' \in \cC'$, $\oplus_{\g\in\G} \g^* \cE'$ is invariant under $\G$ and thus can be descent to $C$. So any object in $F^f$-isco$(C')$ is a subquotient of some object in $F^f$-isoc$(C)$. By (\cite[2.21 (b)]{Deli} ), $G'_\univ$ is a subgroup of $G_\univ$. 

For the other side $G_\univ \ra \G$, consider the kernel of $\cC \ra \cC'$, i.e. the full subcategory $\cal{K}$ in $\cC$ consisted of objects with trivial image in $\cC'$. Then $\cal{K}$ is also Tannakian and by \ref{the descent} it consists of objects $(\cO^{\oplus n}_{C'}, \{\varphi_\g\}_{\g\in\G})$ where $\cO_{C'}$ denotes the trivial isocrystal over $C'$. For any such object, $\varphi$ induces a representation of $\G$ on $k^n$.  Therefore $\cal{K}$ is equivalent to $\Rep(\G)$. The inclusion $\cal{K} \ra \cC$ gives a morphism between groups $G_\univ \ra \G$. 

Now it suffices to show the exactness of $1 \ra G'_\univ \ra G_\univ \ra \G$ in the middle. Let $K$ be the kernel of $G_\univ \ra \G$. Since under $G_\univ \ra \G$, $G'_\univ$ has trivial image,  $G'_\univ \subset K$, i.e. we have 
\[\Rep(K) \ra \Rep(G'_\univ).\]
 For any $g\in G_\univ$ such that $g$ is in the kernel $K$, consider the object $(\oplus_{\g\in\G} \g^* \cE', \{\varphi_\g\})$ in $\cC$ and $\cE' \in \cC'$. The element $g$ fixes each direct summand $\g^* \cE$. In particular, let $\g=\mathrm{id}$ and $g$ acts on $\cE'$. Thus every $\cE' \in \Rep(G'_\univ)$ is  a natural representation of $K$. So $K=G'_\univ$.
\end{proof}

\begin{cor}  $\dim G'_\univ = \dim G_\univ$. \end{cor}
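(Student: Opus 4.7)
The statement is an immediate dimension count from the short exact sequence
\[
1 \longrightarrow G'_\univ \longrightarrow G_\univ \longrightarrow \Gamma
\]
just established in the preceding proposition, so the plan is essentially to extract the dimension information from this sequence. The key point is that $\Gamma = \Aut(C'/C)$ is a \emph{finite} group, being the automorphism group of a finite \'etale cover of a smooth proper curve; as an algebraic group over $\Q_{p^f}$ it is therefore zero-dimensional.

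First I would observe that, by the proposition, $G'_\univ$ is identified with the kernel of the homomorphism $G_\univ \to \Gamma$. The image of this homomorphism is a subgroup scheme of $\Gamma$, hence also finite and of dimension $0$. Thus in the induced exact sequence
\[
1 \longrightarrow G'_\univ \longrightarrow G_\univ \longrightarrow \mathrm{im}(G_\univ \to \Gamma) \longrightarrow 1
\]
the quotient $G_\univ/G'_\univ$ is finite. Comparing dimensions (using that $\dim G_\univ = \dim G'_\univ + \dim(G_\univ/G'_\univ)$ for an exact sequence of algebraic groups) immediately yields $\dim G_\univ = \dim G'_\univ$.

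There is essentially no obstacle here; the only thing worth being careful about is that the map $G_\univ \to \Gamma$ need not be surjective on the nose (the proposition only asserts exactness at $G_\univ$), but this is harmless because any subgroup of the finite constant group $\Gamma$ is automatically $0$-dimensional. So the corollary is really just the statement that passing from $C$ to a finite \'etale cover $C'$ can only change the Tannakian group by a finite quotient, not affect its dimension.
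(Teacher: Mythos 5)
Your argument is correct and is exactly the intended one: the paper states this corollary without proof as an immediate consequence of the exact sequence $1 \to G'_\univ \to G_\univ \to \Gamma$, and since $\Gamma$ is finite, the quotient $G_\univ/G'_\univ$ is zero-dimensional, so the dimensions agree. Your remark that surjectivity onto $\Gamma$ is not needed is a sensible precaution but changes nothing.
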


For the specific crystal $\cE$ in \ref{main thm}, viewed as $F^f$-crystal, $\cE$ has weight $f$. Let $E$ be the representation corresponding to $\cE$ and $G_E=\im(G_\univ \ra \Aut(E))$.

\begin{rmk}  \label{same dimension}
For $E$, we have 
\[\xymatrix{
G'_\univ \ar@{^{(}->}[dd] \ar[dr] & \\
&\Aut(E) \\
G_\univ \ar[ur] &
}\]
which induces $G'_E \hookrightarrow G_E$. Since $\dim G'_\univ = \dim G_\univ$, we have $\dim G'_E =\dim G_E$. 
\end{rmk}

\begin{lemma}\label{rep side}
Notation as above, we have $\dim_{\Q_{p^f}} (\wedge^4 E)^G=1$ and $\Q_{p^f} \otimes \End(\wedge^2 E)^G \cong \Q^{\times 4}_{p^f}$ as algebras.
\end{lemma}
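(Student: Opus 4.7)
The plan is to invoke the Tannakian equivalence of Remark \ref{the equivalence}, which identifies $F^f$-isoc$(C)$ with $\Rep_{\Q_{p^f}}(G_\univ)$. Under this equivalence the isocrystals $\wedge^2\cE$, $\wedge^4\cE$ and $\scr{E}nd(\wedge^2\cE)$ correspond respectively to the representations $\wedge^2 E$, $\wedge^4 E$ and $\End(\wedge^2 E)$ of $G_\univ$; and since $G_\univ$ acts on each of these through its quotient $G_E=\im(G_\univ\ra\Aut(E))$, taking $G_\univ$-invariants is the same as taking $G_E$-invariants.

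The basic dictionary I will use is the following: for any $F^f$-isocrystal $\cF$ of weight zero with associated representation $V$, a morphism $\mathbf{1}\ra\cF$ in the Tannakian category is the same datum as an $F^f$-invariant global section of $\cF$, so
\[
\G((C/W(k))_{\cri},\cF)^{F^f}\otimes\Q_{p^f}\cong \Hom_{F^f\text{-isoc}(C)}(\mathbf{1},\cF)\cong V^{G_E}
\]
as $\Q_{p^f}$-vector spaces, compatibly with any additional algebra structure carried by $\cF$. Applying this to $\cF=\scr{E}nd(\wedge^2\cE)$, which has weight zero since $\cE$ has weight $f$, and recalling that the equivalence respects the composition product, condition (3) of Theorem \ref{main thm} translates directly into the algebra isomorphism $\End(\wedge^2 E)^{G_E}\cong \Q^{\times 4}_{p^f}$.

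For condition (2) the isocrystal $\wedge^4\cE$ has weight $2f$, so the relevant subspace is the $p^{2f}$-eigenspace of $F^f$ rather than the $F^f$-fixed points. I reduce to the weight-zero case via the principal polarization on $\pi: X\ra C$: its iterated wedge supplies, inside the Tannakian subcategory of $F^f$-isoc$(C)$ generated by $\cE$, a canonical Frobenius-equivariant element providing the Tate normalization that identifies the $p^{2f}$-eigenspace of $F^f$ on $\G(\cri,\wedge^4\cE)$ with the $F^f$-fixed sections of a weight-zero twist of $\wedge^4\cE$. Under the fiber functor this weight-zero twist corresponds to $\wedge^4 E$ as a $G_E$-representation, and condition (2) then yields $\dim_{\Q_{p^f}}(\wedge^4 E)^{G_E}=1$.

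The main subtlety is the treatment of the nonzero weight of $\wedge^4\cE$ and the corresponding Tate twist: the statement of the lemma is phrased in terms of plain $G_E$-invariants, whereas the Frobenius condition in (2) naively produces $G_E$-invariants of a Tate-twisted version of $\wedge^4 E$. Once the polarization supplies the required normalization inside the Tannakian subcategory generated by $\cE$, the remainder of the argument is a formal consequence of Tannakian duality.
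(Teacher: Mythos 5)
Your proposal follows essentially the same route as the paper's proof: translate conditions (2) and (3) through the Tannakian equivalence into statements about $G_\univ$-invariants (hence $G_E$-invariants) of $\wedge^4 E$ and $\End(\wedge^2 E)$, with the only delicate point being the weight of $\wedge^4\cE$, which the paper likewise handles by passing to the Tate twist $\wedge^4\cE(2f)$ and asserting that the twist by $\chi^{2f}$ does not change the dimension of invariants. Your explicit appeal to the polarization to produce the Tate object inside the subcategory generated by $\cE$ is a slightly more careful justification of the paper's one-line remark that ``Tate twist only affects the weight,'' but the argument is otherwise the same.
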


\begin{proof}
Condition (2) is equivalent to $\dim_{\Q_{p^f}} \G((C/W)_\cri, \wedge^4 \cE(2f))^{F^f}\otimes \Q_{p^f}=1$. The space \[\G((C/W)_\cri, (\wedge^4 \cE)(2f))^{F^f}\]
 consists of invariant elements in $(\wedge^4 \cE ) (2f)$. The representation associated to $\wedge^4 \cE(2f)$ is $E\otimes \chi^{2f}$ where $\chi$ is the character of $G_\univ$ corresponding to Tate twist. Then $\dim_{\Q_{p^f}}(E\otimes \chi^{2f})^{G_\univ}=1$. Note Tate twist only affects the weight, $\dim_{\Q_{p^f}} (E)^G =\dim_{\Q_{p^f}} (E)^{G_\univ}=\dim_{\Q_{p^f}} (E\otimes \chi)^{G_\univ}=1$.
 
The isomorphism $\Q_{p^f} \otimes \End(\wedge^2 E)^G \cong \Q^{\times 4}_{p^f}$ follows directly.
\end{proof}
\subsection{Notations}We summarize the notations here. 
\begin{description}
\item[$\tilde C$]  \hfill \\
Any lifting of $C$ to $W(k_0)$.

\item[$\eta$]  \hfill \\
The generic point of $C$.
\item[$C' , \eta' $]  \hfill \\
 $C'$ an \'etale covering of $C$, $\eta'$ generic point of $C'$.
 Any symbol with a prime $'$ denotes its pullback from $C$ to $C'$, see \ref{notations on C'}.

\item[$\cal{E}$]  \hfill \\
 The crystalline higher direct image of $\pi$: $R^1\pi_{\cri, *}(\cal{O}_X)$, also the bundle with an integrable connection and Frobenius. 

\item[$G_\univ$]  \hfill \\
 The affine group scheme such that $F^f$-isoc$(C)$ is equivalent to $\Rep(G_\univ)$. 

\item[$E$] \hfill \\  The Tannakian duality of $\cE$ in $\Rep(G_\univ)$. 
\item[$G_E$]  \hfill \\ $\im(G_\univ \ra \Aut(E))$.
\item[$\frak{g}_E$] \hfill \\  The Lie algebra of $G_E$.
\item[$\cal{G}, \g$]  \hfill \\ A morphism $\g: \cE \ra \cG$ which is surjective after inverting $p$. 

\item[$G$]  \hfill \\ The BT group corresponding to $\cal{G}$. 
\item[$\r$]  \hfill \\ The morphism between BT groups $\r: G \ra X[p^\infty]$ corresponding to $\g: \cE \ra \cG$. 
\item[$K_n$]  \hfill \\ The $p^n$-torsion flat group scheme $\r_\eta(G_\eta)[p^n]^-$ on $C$.
\item[$\{\cT_n\}$] \hfill \\ A filtration $\{\cdots\cT_{n-1} \supset \cT_n \cdots\}$ with $\cT_n = \DD(X/K_n)$.
\item[$\{\cI_n\}$]  \hfill \\ A filtration $\{\cdots\cI_{n-1} \supset \cI_n \cdots\}$ with $\cI_n = \g^{-1}(p^n\cG)$.
\item[$H_n$] \hfill \\ $K_n/K_{n-1}$. 
\item[$i: W(k_0) \ra \tilde C$] \hfill \\ A section which lifts $\Spec k_0 \ra \eta$.
\item[$\cE_\eta$(or $i^*\cE$)]  \hfill \\the Dieudonne module over $W(k_0)$ which corresponds to $X[p^\infty]_\eta$.
\item[$\cG_\eta$(or $i^*(\cG)$)] \hfill \\  the Dieudonne module over $W(k_0)$ which corresponds to $G_\eta$ .
\item[$\cV_i$] \hfill \\ the rank 2 Dieudonne $F^f$-(iso)crystals appearing in the tensor decomposition of $\cE$ . 
\end{description}

\section{The structure of $G_E$} \label{a glance}

\subsection{Simplicity of $X/C$} 
\begin{prop}
There is no proper abelian subvariety $Y \hookrightarrow X$ over $C$.
\end{prop}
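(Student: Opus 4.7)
My plan is to contradict condition (2) of Theorem \ref{main thm} by producing two linearly independent $F^f$-eigenvectors of eigenvalue $p^{2f}$ inside $\G((C/W(k))_\cri, \wedge^4 \cE)$, starting from a supposed proper non-trivial abelian subvariety $Y\hookrightarrow X$ of relative dimension $r$ (with $1\leq r\leq 3$ and $s:=4-r$).

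First I would invoke Poincar\'e complete reducibility in the family. The principal polarization $\l: X\ra X^\vee$ composed with the dual $X^\vee \twoheadrightarrow Y^\vee$ of the closed immersion $Y\hookrightarrow X$ gives a morphism $X\ra Y^\vee$ of abelian schemes; the connected component of its kernel is an abelian subscheme $Z\subset X$ over $C$ of relative dimension $s$, and the addition map $Y\times_C Z \ra X$ is an isogeny over $C$. Applying $R^1\pi_{\cri,*}\cO$ turns this isogeny into an isomorphism of $F^f$-isocrystals
\[ \cE \;\cong\; \cE_Y\oplus \cE_Z, \]
with $\cE_Y$ of rank $2r$ and $\cE_Z$ of rank $2s$. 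The polarizations inherited by $Y$ and $Z$ from $\l$ (up to isogeny) give Frobenius-equivariant global sections $\o_Y\in \G(\wedge^2 \cE_Y)$ and $\o_Z\in \G(\wedge^2 \cE_Z)$ with $F\o_Y=p\o_Y$ and $F\o_Z=p\o_Z$.

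Using $\wedge^4 \cE=\bigoplus_{i+j=4}\wedge^i \cE_Y\otimes \wedge^j \cE_Z$, I produce the two required invariants. First, $\o_Y\wedge \o_Z$ lies in the summand $\wedge^2 \cE_Y\otimes \wedge^2 \cE_Z$ and satisfies $F(\o_Y\wedge \o_Z)=p^2(\o_Y\wedge \o_Z)$. Second, since $r+s=4$ and $r,s\geq 1$, at least one of $r,s$ is $\geq 2$; without loss of generality $r\geq 2$, and then $\o_Y^{\wedge 2}$ is a non-zero element of $\wedge^4 \cE_Y$ (a non-vanishing power of a non-degenerate alternating form on a $2r$-dimensional space with $2r\geq 4$), again with $F$-eigenvalue $p^2$. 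Since these two invariants lie in distinct summands of the direct sum decomposition of $\wedge^4 \cE$, they are linearly independent, yielding $\dim_{\Q_{p^f}} \G((C/W)_\cri, \wedge^4 \cE)^{F^f=p^{2f}}\otimes \Q_{p^f}\geq 2$, contradicting condition (2).

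The main obstacle I anticipate is the family version of Poincar\'e reducibility: over a single fibre it is classical, but over the curve $C$ one must verify that the complementary abelian subscheme $Z$ exists globally as an abelian subscheme of $X$ and not merely generically. This should follow from the regularity of $C$, the smoothness and properness of $X\ra C$, and standard rigidity for morphisms of abelian schemes. As a cleaner alternative, one can sidestep the geometric step and argue entirely in the Tannakian category of $F^f$-isocrystals: the closed immersion $Y\hookrightarrow X$ induces a sub-$F^f$-isocrystal $\cE_Y\subset \cE$, and the perfect alternating pairing on $\cE$ coming from $\l$ splits $\cE$ orthogonally as $\cE_Y\oplus \cE_Y^\perp$ after inverting $p$, delivering the direct sum decomposition needed for the rest of the argument.
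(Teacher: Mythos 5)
Your proposal is correct and follows essentially the same route as the paper: Poincar\'e reducibility gives an isogeny $X\sim Y\times_C Z$, the polarizations of the two factors produce two linearly independent Frobenius eigenvectors of eigenvalue $p^{2}$ in degree-$4$ cohomology (the paper packages this via the K\"unneth decomposition of $H^4_\cri(Y\times Z)$ and the Leray spectral sequence, you via $\wedge^4(\cE_Y\oplus\cE_Z)$, which is the same decomposition), contradicting the one-dimensionality hypothesis. Your explicit attention to the family version of Poincar\'e reducibility and the WLOG $r\geq 2$ case split are points the paper treats more tersely but in the same spirit.
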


\begin{proof}
If there exists a proper abelian subvariety $Y \subset X$, then by Poincare irreducibility theorem, we have $Z \subset X$ and an isogeny $g: X \ra Y \times Z $.  The isogeny induces a morphism $g^*: H^4_\cri(Y\times Z, \cO) \ra H^4_\cri(X, \cO)$. By Kunneth formula for crystalline cohomology, 
\[H^4_\cri(Y\times Z, \cO)=H^4_\cri(Y)\oplus H^4_\cri(Z)\oplus H^2_\cri(Y)\otimes H^2_\cri(Z) \oplus \cdots .\]
 The self product of polarizations on $Y$ and $Z$ gives the nontrivial elements in the second cohomology groups which are all Frobenius eigen-elements. If $\dim Y \geq 2$, then $H^4_\cri(Y)^{F-p^2}$ contains the self product of the polarization. Thereby $H^4_\cri(Y\times Z, \cO)^{F-p^2}$ contains at least two linearly independent idempotents.

The Leray spectral sequence $H^4_\cri(Y\times Z, \cO) \ra H^0((C/W)_\cri, R^4\pi_* (\cO_{Y\times Z}))$ induces diagram 
\[\xymatrix{
H^4_\cri(Y\times Z, \cO)^{F^f}  \ar[r]^{g^*} \ar[d]^{\text{pr}} & H^4_\cri(X, \cO)^{F^f} \ar[d] \\
H^0((C/W)_\cri, R^4\pi_* (\cO_{Y\times Z}))^{F^f} \ar[r] & H^0((C/W)_\cri, R^4\pi_*(\cO_X))^{F^f}.\\
}\]
The idempotents in $H^4_\cri(Y\times Z, \cO)$ gives two linearly independent elements in $\G((C/W)_\cri, R^4\pi_*(\cO_X))^{F^f}$. Then $\dim_{\Q_p} \G((C/W)_\cri, R^4\pi_*(\cO_X))^{F^f} \geq 2$ for any $f$, contradicting to condition (3) in \ref{main thm}.
\end{proof}

\subsection{ $G_E$ reductive}
We will show that $G_E$ is reductive. The idea is to show $E$ is a faithful irreducible representation of $G_E$. Firstly we show the following general fact.

\begin{thm} \label{simple and irreducible}
The abelian scheme $X/C$ is simple if and only if  $\DD(X/C)$ is an irreducible $F$-isocrystal over $C$.
\end{thm}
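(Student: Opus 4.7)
The plan is to prove the two implications separately, with the reverse direction elementary and the forward direction resting on a Tate-type input.

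For the reverse direction (irreducible $\Rightarrow$ simple), I argue contrapositively. Given a proper abelian sub-scheme $Y \hookrightarrow X$ over $C$, the principal polarization on $X$ produces a complement $Z \subset X$ so that $X \sim Y \times_C Z$ in the isogeny category of abelian schemes over $C$ (Poincar\'e reducibility). Applying the Dieudonn\'e functor yields an isogeny of $F$-isocrystals $\cE \sim \DD(Y) \oplus \DD(Z)$, contradicting irreducibility of $\cE$.

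For the forward direction, I argue by contradiction: assume $\cE$ admits a proper nonzero sub-$F$-isocrystal $\cF$. Saturating $\cF$ inside $\cE$ gives a sub-Dieudonn\'e crystal with locally free quotient; by the anti-equivalence of Theorem \ref{equiv on curve}, this corresponds (up to isogeny) to a short exact sequence of BT groups $0 \to G_1 \to X[p^\infty] \to G_2 \to 0$ over $C$, with $\DD(G_2) \sim \cF$ and $\DD(G_1) \sim \cE/\cF$. The induced polarization on $X[p^\infty]$ splits this sequence in the isogeny category, yielding $X[p^\infty] \sim G_1 \times G_2^\vee$ as BT groups on $C$.

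To lift this decomposition of the BT group to an isogeny decomposition of the abelian scheme $X$, I would descend $\pi$ to a model $X_0 \to C_0$ over $\F_{p^{f_0}}$ and pass to the generic fiber $X_{0,\eta}$, an abelian variety over the finitely generated field $K = \F_{p^{f_0}}(C_0)$. The key input is a Tate-type theorem asserting that $\End(X_{0,\eta}) \otimes \Q_p$ surjects onto the endomorphism algebra of $\cE_\eta$ in the category of $F$-isocrystals (the $\ell$-adic version is due to Zarhin; the $p$-adic version over function fields of positive characteristic follows from the theory of Dieudonn\'e crystals over regular bases combined with Tate's conjecture). The idempotent on the isocrystal side coming from $\cF$ then lifts to an idempotent in $\End(X_{0,\eta}) \otimes \Q$, giving a proper isogeny factor of $X_{0,\eta}$; its schematic closure extends to a proper abelian sub-scheme of $X$ over $C$, contradicting simplicity. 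The Tate-type lifting in this last step is the principal obstacle, requiring input beyond the crystalline formalism itself.
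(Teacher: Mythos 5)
Your reverse direction (irreducible $\Rightarrow$ simple) via Poincar\'e reducibility is fine and is the easy half, which the paper does not even spell out. The forward direction, however, has two genuine gaps, both located exactly where you try to replace an argument by a citation. First, the claim that ``the induced polarization on $X[p^\infty]$ splits this sequence in the isogeny category'' is unjustified and false in general: the polarization gives a perfect alternating pairing $\cE \otimes \cE \ra \cO(-1)$, which splits off a sub-object $\cF$ only when $\cF \cap \cF^{\perp} = 0$; if $\cF$ is isotropic there is no splitting. Isotropic sub-objects really occur --- the multiplicative part of an ordinary BT group is isotropic for the Weil pairing, and the corresponding extension is non-split for a non-isotrivial family. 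Semisimplicity is the hard content of Tate-type theorems and is \emph{proved by} the finiteness argument, not deduced formally from the existence of a polarization. Second, even granting a splitting, your idempotent lives in $\End(X_{0,\eta}) \otimes \Q_p$, and there is no reason its lift can be taken in $\End(X_{0,\eta}) \otimes \Q$: for a simple abelian variety $\End \otimes \Q$ is a division algebra whose completion at $p$ may be a product of matrix algebras and hence contain many idempotents (this is exactly what happens for an ordinary elliptic curve over a finite field, whose Dieudonn\'e isocrystal is reducible although the curve is simple). So the passage from a $\Q_p$-idempotent to a proper isogeny factor does not follow, and this failure shows that the phenomenon being proved is genuinely global over the proper curve $C$; restricting to the generic fiber discards the information that makes the statement true.

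The paper avoids both problems by never producing an idempotent. From the proper quotient $\g: \cE \ra \cG$ it builds a map of BT groups $\r: G \ra X[p^\infty]$ over all of $C$, does careful integral bookkeeping (the filtrations $\cT_n = \cI_n$ and the stabilization $H_n \cong H_{n+1}$) to extract a genuine sub-BT group $H$ of $Z_k = X/K_k$ over $C$, and then runs the Zarhin--de Jong finiteness argument on the quotients $Z'_n = Z_k/H[p^n]$ (constant $\det(\o_{Z'_n})$, hence finitely many isomorphism classes among the $Z'_n$) to manufacture a non-surjective endomorphism of $Z_k$, contradicting simplicity directly. In other words, it re-proves the relevant semisimplicity-type statement in the global setting rather than citing Tate's theorem; if you want to keep your shorter route you would have to supply precisely that finiteness argument to justify the splitting, at which point you have reproduced the paper's proof.
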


%Let $\eta \in C$ be the generic point. If $X_\eta$ is not simple, then it is isogenous to a product of abelian varieties $Y \times Z$. Extend the $Y$ and $Z$ to the whole curve and apply the Kunneth formula. 
\subsection{The proof of \ref{simple and irreducible}}
 If $\cE$ is not irreducible, then there exists a $F$-isocrystal $\cal{G}$ such that $\cE \ra \cG$ is surjective. The slopes of $\cG$ are between 0 and 1. Hence $\cG$ has a model of $F$-crystal over $C/W(k)$ (see Appendix \ref{crystal model}), which we still denote as $\cG$. It is easy to show the Verschiebung $V$ also descends to $W(k)$ and hence $\cG$ is a Dieudonne crystal.

However, the morphism between Dieudonne crystals 
\[\g: \cE \ra \cG\] may not be surjective. We only know that $\im \g \supset p^k\cG$ for some integer $k$.

By \ref{equiv on curve}, $\g$ corresponds to a morphism between BT groups: 
\[\r: G \ra X[p^\infty].\]
Since $p^k \cG \subset \im \g$, the kernel of $\r$ is a subgroup scheme of $G[p^k]$. In particular, $\ker \r$ is finite. Let $\eta \in C$ be the generic point. Though $\im\r$ is merely an fppf abelian sheaf, its generic fiber $\im\r_\eta \subset X[p^\infty]_\eta$ is a BT group. Further, due to the following lemma, we can further assume $\r_\eta$ is injective. 

\begin{lemma} \label{generically injective}
The morphism $\r$ factors through a BT group $G'$ such that $G'\ra X[p^\infty]$ is generically injective. 
\end{lemma}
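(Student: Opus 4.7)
The plan is to take $G' := G/N$, where $N\subset G$ is the scheme-theoretic closure of the finite generic kernel $N_\eta := \ker(\rho_\eta)$. Since $\ker(\rho)\subset G[p^k]$ is closed and contains $N_\eta$, the closure $N$ lies in $G[p^k]$. Because $C$ is a smooth curve and $G[p^k]$ is $C$-flat, the scheme-theoretic closure of a generic fiber inside a flat scheme over a regular one-dimensional base is itself flat, so $N$ is finite locally free over $C$. The group scheme structure on $N_\eta$ extends from $\eta$ to $C$ by applying the closure argument to the multiplication, inversion, and unit section of $G$.

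With $N\subset G$ a finite locally free closed subgroup, I form the fppf quotient $G' := G/N$, which is representable by a $C$-group scheme. The principal step is to check that $G'$ is a Barsotti--Tate group. The $p$-divisibility of $G'$ is inherited from that of $G$. For the $p^n$-torsion, the relation $N\subset G[p^k]$ forces $[p^n]^{-1}(N) \subset G[p^{n+k}]$, and this preimage is the fiber product $G[p^{n+k}]\times_{G[p^k]} N$. Since $[p^n]\colon G[p^{n+k}]\to G[p^k]$ is finite locally free (a standard BT property), the preimage is finite locally free over $N$, and therefore over $C$. Quotienting by the finite locally free $N$ gives that $(G/N)[p^n] = [p^n]^{-1}(N)/N$ is finite locally free, confirming that $G'$ is BT.

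To conclude, note that $\ker(\rho)$ is a closed subgroup scheme of $G$ containing $N_\eta$, so by definition of scheme-theoretic closure $N\subset\ker(\rho)$; hence $\rho$ descends through $G\to G'$, and the generic kernel of $G'\to X[p^\infty]$ equals $\ker(\rho_\eta)/N_\eta = 0$, giving the desired generic injectivity.

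The main obstacle is verifying the Barsotti--Tate property of the quotient $G/N$: this requires the BT-specific flatness of the layer-wise $p^n$-multiplication maps together with the fact that finite locally free quotients of finite locally free group schemes are again finite locally free. Everything else reduces to formal manipulations with scheme-theoretic closure on a regular one-dimensional base.
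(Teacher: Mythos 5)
Your proposal is correct and follows essentially the same route as the paper: both take the scheme-theoretic (flat) closure of $\ker(\rho_\eta)$ in $G$, observe that it is a finite locally free subgroup contained in $\ker\rho$, and set $G'$ to be the quotient BT group. The paper asserts without detail that $G/K$ is a BT group and that $K\subset\ker\rho$ (arguing via flatness and vanishing at $\eta$ rather than via minimality of the closure), so your write-up merely supplies standard verifications the paper leaves implicit.
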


\begin{proof}

Let $K=(\ker \r_\eta)^-$ be the closure of $\ker \r_\eta$ in $G$. Then $K$ is a finite flat group scheme over $C$. Further, $\r(K)=0$ since $\r(K)_\eta=0$ and $K$ is flat. Therefore $K\subset \ker \r$. Let $G'=G/ K$. Then $G'$ is a BT group and $\r$ factors through $G'$. 
\end{proof}

Since $\r_\eta$ is injective, $\r_\eta(G_\eta[p^n])^-=\r_\eta(G_\eta)[p^n]^-$. We denote it as $K_n$. Then $K_n$ is a finite flat group scheme over $C$. Therefore $X[p^\infty]/K_n$ is a BT group and applying the crystalline Dieudonne functor yields that $\cal{T}_n:=\DD(X/K_n)$ is a finite locally free subsheaf of $\cE$.  And they form a filtration $\cT_0=\cal{E} \supset\cdots\cT_{n-1}\supset \cT_n \supset \cT_{n+1}\cdots$ with subquotient $\cT_{n-1}/\cT_n=\DD(K_n / K_{n-1})$. Another filtration on $\cE$ is that $\{\cal{I}_n=\gamma^{-1}(p^n\cG)\}$. 

\begin{lemma} $\cT_n \subset \cI_n$. \end{lemma}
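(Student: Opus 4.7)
The plan is to translate the containment $\cT_n \subset \cI_n$ via the anti-equivalence of Theorem \ref{equiv on curve} into a statement about subgroup schemes of $X[p^n]$. By definition $\cI_n = \g^{-1}(p^n\cG)$, so $\cT_n \subset \cI_n$ is equivalent to the vanishing of the composite
\[ \cT_n \hookrightarrow \cE \xrightarrow{\g} \cG \twoheadrightarrow \cG/p^n\cG. \]
Using the identifications $\cG/p^n\cG \cong \DD(G[p^n])$ and $\cT_n = \DD(X[p^\infty]/K_n) = \ker(\cE \to \DD(K_n))$, together with the contravariant Dieudonne equivalence, this vanishing is equivalent to the vanishing of the composition of BT-group morphisms
\[ G[p^n] \xrightarrow{\r|_{G[p^n]}} X[p^\infty] \twoheadrightarrow X[p^\infty]/K_n. \]
Hence the lemma reduces to showing $\r(G[p^n]) \subset K_n$ as closed subgroup schemes of $X[p^n]$.

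I would verify this last inclusion by a flat-closure argument. On the generic fibre, $\r_\eta$ is injective by Lemma \ref{generically injective}, so $\r_\eta(G_\eta[p^n]) = G_\eta[p^n] = \r_\eta(G_\eta)[p^n] = K_{n,\eta}$; thus the generic restriction of $\r|_{G[p^n]}$ factors through $K_{n,\eta}$. Globally, $G[p^n]$ is a finite flat $C$-group scheme, and $K_n$ is by construction the flat closure of $K_{n,\eta}$ in $X[p^n]$, in particular a closed subscheme flat over $C$. Since $G[p^n]$ is $C$-flat and its generic fibre is dense, any morphism from $G[p^n]$ to $X[p^n]$ whose restriction to the generic fibre factors through a $C$-flat closed subscheme must factor through that subscheme globally (the pullback of the defining ideal becomes a $C$-torsion subsheaf of $\cO_{G[p^n]}$, hence vanishes by flatness). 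Applying this to $K_n$ yields $\r(G[p^n]) \subset K_n$.

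The main subtlety is keeping track of the Dieudonne duality in the first paragraph—in particular the compatibility of $\cG/p^n\cG \cong \DD(G[p^n])$ with $\g$, and the presentation of $\cT_n$ as the kernel of $\cE \to \DD(K_n)$. Once those identifications are in place, the flat-closure step is formal and the lemma follows.
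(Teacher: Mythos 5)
Your proof is correct and follows essentially the same route as the paper: the paper's argument is precisely the commutative diagram of BT groups with top arrow $G[p^n] \hookrightarrow K_n$ (i.e. $\r(G[p^n]) \subset K_n$), dualized under the Dieudonn\'e equivalence to read off $\g(\cT_n) \subset p^n\cG$. You simply make explicit the two steps the paper leaves implicit, namely the identification $\cG/p^n\cG \cong \DD(G[p^n])$ and the flat-closure justification of $\r(G[p^n]) \subset K_n$.
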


\begin{proof}
On the side of BT groups, we have the following diagram: 
\[\xymatrix{
G[p^n] \ar@{^{(}->}[r] \ar@{^{(}->}[d] & K_n \ar@{^{(}->}[d]\\
G \ar@{->}[r]^\r \ar@{->>}[d] & X[p^\infty] \ar@{->>}[d] \\
G \ar[r] & X[p^\infty]/K_n .\\
}\] Dually, on the side of Dieudonne crystals, 
\[\xymatrix{
\DD(G[p^n])   & \DD(K_n) \ar@{->>}[l] \\
\cG  \ar@{->>}[u]& \cE \ar@{->>}[u] \ar@{->}[l]_\gamma\\
\cG   \ar@{^{(}->}[u]_{p^n}&\cT_n . \ar@{^{(}->}[u] \ar[l]\\
}\] The above diagram gives that the image of $\cT_n$ in $\DD(G)$, composing the upper and right arrows, is contained in $p^n\cG$. Therefore $\cT_n \subset \cI_n$.
\end{proof}

On one hand, we restrict the two diagrams above at the generic point $\eta$. Note $\gamma_\eta$ is surjective. 
\[\xymatrix{
\DD(G_\eta[p^n]) & \DD(K_{n,\eta}) \ar[l]_\cong\\
\cG_\eta \ar@{->>}[u] & \cE_\eta \ar@{->>}[u] \ar@{->>}[l]_{\g_\eta}\\
\cG_\eta \ar@{^{(}->}[u] & \cT_{n,\eta} \ar@{^{(}->}[u] \ar@{->>}[l]
}\]
Since $G_\eta[p^n]\cong K_{n,\eta}$, we have $\cE_\eta/\cT_{n,\eta}\cong \DD(G_\eta[p^n])\cong \cG_\eta/p^n\cG_\eta \cong \cE_\eta / \cI_{n, \eta}$. We already have $\cT_n \subset \cI_n$. Hence $\cT_{n,\eta} = \cI_{n,\eta}$. 

On the other hand, since $\DD(K_n)$ is $p^n$-torsion,  $p^n\cE \subset \cT_n$. Therefore $\cT_n \otimes \cO_{\tilde C}[\frac{1}{p}] \cong \cE \otimes \cO_{\tilde C}[\frac{1}{p}]$. In particular, $\cT_n \otimes \cO_{\tilde C}[\frac{1}{p}]\cong \cI_n \otimes \cO_{\tilde C}[\frac{1}{p}]$. 

Therefore, $\cT_n \subset \cI_n$ induces isomorphisms over generical fiber of $\tilde C$ and generic point of $C$. In particular, it is isomorphic on every height 1 points in $\tilde C$. 

\begin{lemma} \label{zero quotient}
Let $(D, \frak{m})$ be a regular local domain of dimension 2 and 
\[0\ra M \ra N \ra Q \ra 0\] be a short exact sequence of $D$-modules with $M$ finite free, $N$ torsion free and $\mathrm{supp}(Q)\subset \{\frak{m}\}$, then $Q=0$.
 \end{lemma}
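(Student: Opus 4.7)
The plan is to extract $\mm$-torsion elements (socles) by applying $\Hom_D(D/\mm, -)$ to the short exact sequence and derive a contradiction from the assumption $Q \neq 0$.

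First I would observe that $Q$ is a finitely generated $D$-module (as a quotient of the finitely generated $N$), and $\mathrm{supp}(Q) \subset \{\mm\}$ means $Q$ is annihilated by some power of $\mm$, hence has finite length. In particular, if $Q \neq 0$ then the socle $\Hom_D(D/\mm, Q)$ is nonzero, since any nonzero finite-length module over a local ring has nonzero socle. Next I would apply $\Hom_D(D/\mm, -)$ to the short exact sequence to obtain the long exact sequence
\[
0 \to \Hom_D(D/\mm, M) \to \Hom_D(D/\mm, N) \to \Hom_D(D/\mm, Q) \to \mathrm{Ext}^1_D(D/\mm, M) \to \cdots .
\]
Since $N$ is torsion-free and $\mm \neq 0$, it has no nonzero $\mm$-torsion, so the middle term vanishes. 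Since $M$ is finite free over the two-dimensional regular local ring $D$, one has $\mathrm{depth}_\mm M = 2$, whence $\mathrm{Ext}^i_D(D/\mm, M) = 0$ for $i \leq 1$. The exact sequence therefore squeezes $\Hom_D(D/\mm, Q)$ between two zeros, forcing it to vanish; combined with the previous paragraph this forces $Q = 0$.

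There is no genuine obstacle here: the argument is a one-line application of depth theory (Rees's characterization of depth via vanishing of Ext into $D/\mm$) together with the fact that torsion-freeness is precisely the vanishing of $\mm$-torsion. An equivalent approach would be to invoke the depth lemma to conclude $\mathrm{depth}(N) \geq 2$, then use Auslander-Buchsbaum to deduce $N$ is free of the same rank as $M$, and finally observe that an inclusion $M \hookrightarrow N$ of free modules of equal rank with finite-length cokernel must actually be an isomorphism because over a regular local ring of dimension $2$ the cokernel of a rank-preserving inclusion of free modules is supported on a divisor if nonzero.
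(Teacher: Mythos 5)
Your argument is correct, and it is genuinely different from the one in the paper. The paper proves the lemma by passing to the reflexive hull: since $D$ is regular of dimension $2$, $N^{\vee\vee}$ is free of the same rank as $M$, the composite $M\hookrightarrow N\hookrightarrow N^{\vee\vee}$ is given by a square matrix $T$, and the cokernel $Q'$ (which still has support in $\{\mathfrak{m}\}$ and contains the support of $Q$) is cut out by $\det T$; since a nonunit determinant would produce a support of dimension $1$, one concludes $Q'=0$. Your route instead applies $\Hom_D(D/\mathfrak{m},-)$ and squeezes the socle of $Q$ between $\Hom_D(D/\mathfrak{m},N)=0$ (torsion-freeness of $N$) and $\mathrm{Ext}^1_D(D/\mathfrak{m},M)=0$ (Rees' theorem, since $M$ free over a $2$-dimensional Cohen--Macaulay local ring has depth $2$). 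This is cleaner and slightly more general: it never uses the classification of reflexive modules or the determinantal description of supports, and it would work verbatim for any $M$ with $\mathrm{depth}_{\mathfrak{m}} M\geq 2$, whereas the paper's argument leans on freeness of reflexive modules, which is special to regular local rings of dimension $\leq 2$. One caveat: the ``equivalent approach'' you sketch at the end does not quite work as stated --- from $0\to M\to N\to Q\to 0$ with $Q$ of finite length the depth lemma gives only $\mathrm{depth}(N)\geq \min(\mathrm{depth}(M),\mathrm{depth}(Q))=0$, not $\mathrm{depth}(N)\geq 2$; the usable form is $\mathrm{depth}(Q)\geq\min(\mathrm{depth}(M)-1,\mathrm{depth}(N))\geq 1$, contradicting $\mathrm{depth}(Q)=0$ for a nonzero finite-length module. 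Since that remark is an aside and your main argument is complete, this does not affect the proof.
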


\begin{proof}
Since $D$ is regular of dimension 2, we have the reflexive module $N^{\vee\vee}$ of $N$ is free(\cite[Chapter 2, Proposition 25]{Friedman}). The new short exact sequence $0\ra M \xrightarrow{T} N^{\vee\vee} \ra Q' \ra 0$ still satisfies that $\mathrm{supp}(Q')\subset \{\frak{m}\}$. Since $N$ is torsion free, still by (\cite[ Chapter 2, Corollary 21]{Friedman}), we have $N\subset N^{\vee\vee}$. Thereby $\mathrm{supp}(Q) \subset \mathrm{supp}(Q')$. Note both of $M$ and $N^{\vee\vee}$ are free of the same rank and hence $T$ can be represented as a square matrix with entries in $D$. So the support of $Q'$ is the zero set of $\det T$. But if $\det T$ is a nonunit, then the dimension of the zero set $Z(\det T)$ is of dimension 1. Hence $Q=0$. 
\end{proof}

\begin{cor} \label{same filtration}
$\cT_n=\cI_n$.
\end{cor}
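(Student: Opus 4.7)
The plan is to deduce $\cT_n = \cI_n$ by applying Lemma \ref{zero quotient} stalkwise at each closed point of $\tilde C$. First I would form the coherent quotient $Q := \cI_n/\cT_n$ sitting in the short exact sequence
\[0 \to \cT_n \to \cI_n \to Q \to 0.\]
The two observations established just before the corollary, namely $\cT_{n,\eta} = \cI_{n,\eta}$ at the generic point $\eta$ of $C$ and $\cT_n \otimes \cO_{\tilde C}[1/p] \cong \cI_n \otimes \cO_{\tilde C}[1/p]$ on the generic fiber of $\tilde C$, together kill $Q$ at every height-one point of the two-dimensional regular scheme $\tilde C$: the second observation handles the generic point of $\tilde C$ and (after localizing) the closed points of the generic fiber, while the first handles the generic point of the special fiber $C$. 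Hence $\mathrm{supp}(Q)$ is contained in the discrete set of closed points of $\tilde C$.

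Next, pick any closed point $x\in \tilde C$. The local ring $D := \cO_{\tilde C, x}$ is a regular local domain of dimension $2$ because $\tilde C$ is smooth of relative dimension $1$ over $W(k)$. The stalk $(\cT_n)_x$ is finite free over $D$ since $\cT_n = \DD(X/K_n)$ is a crystal in locally free sheaves, hence a vector bundle on $\tilde C$. The stalk $(\cI_n)_x$ is torsion free as a submodule of the locally free sheaf $\cE$. By the previous paragraph, $Q_x$ is supported in $\{\mathfrak{m}_x\}$. Lemma \ref{zero quotient} then forces $Q_x = 0$, and since this holds at every closed point while $Q$ is already zero elsewhere, we conclude $Q = 0$, i.e.\ $\cT_n = \cI_n$.

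There is no real obstacle: all the delicate work, showing that the inclusion becomes an isomorphism on the codimension-one complement of the closed points of $\tilde C$, has already been carried out in the preceding paragraphs, and the corollary is then a direct application of the algebraic lemma to the two-dimensional regular local rings of $\tilde C$.
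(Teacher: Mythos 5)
Your proposal is correct and follows essentially the same route as the paper: the paper's proof is the one-line instruction to localize the inclusion $\cT_n \hookrightarrow \cI_n$ at each closed point of $\tilde C$ and apply Lemma \ref{zero quotient}, which is exactly what you do, with the hypotheses of the lemma (local freeness of $\cT_n$, torsion-freeness of $\cI_n$, and the support condition on the quotient coming from the two isomorphisms in codimension one) verified explicitly.
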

\begin{proof}
Localize the injection $\cT_n \hookrightarrow \cI_n$ at each closed point of $\tilde C$ and apply lemma \ref{zero quotient}.
\end{proof}

\begin{prop} \label{H}
$H_n \cong H_{n+1}$ if $n$ large enough.
\end{prop}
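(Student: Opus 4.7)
My strategy is to compute the Dieudonn\'e crystal $\DD(H_n) = \cT_{n-1}/\cT_n$ explicitly using the identification $\cT_n = \cI_n$ from Corollary \ref{same filtration}, show that these quotients stabilize up to isomorphism once $n$ is large enough, and then invoke an anti-equivalence to lift this to an isomorphism of the group schemes $H_n$.

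First, since $\g$ is surjective after inverting $p$, I fix an integer $k \geq 0$ with $p^k \cG \subset \im \g$. For every $n \geq k$ the restriction of $\g$ to $\cI_n = \g^{-1}(p^n\cG)$ surjects onto $p^n\cG$ with kernel $\ker \g$ independent of $n$. Comparing the short exact sequences $0 \to \ker\g \to \cI_n \to p^n\cG \to 0$ for $n$ and $n+1$ yields a Dieudonn\'e-crystal isomorphism $\cI_n/\cI_{n+1} \cong p^n\cG/p^{n+1}\cG$; the $F$- and $V$-structures correspond because $\g$ is a morphism of Dieudonn\'e crystals and a direct check shows $\cI_n$ is preserved by $F$ and $V$ on $\cE$.

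Next, because $\cG$ is locally free (hence $p$-torsion free) over $\cO_{\tilde C}$, multiplication by $p$ gives $\cO_{\tilde C}$-linear isomorphisms $p^n\cG \xrightarrow{\sim} p^{n+1}\cG$. Since $\sigma(p)=p$ and multiplication by $p$ commutes with $F$ and $V$, these descend to Dieudonn\'e-crystal isomorphisms $p^{n-1}\cG/p^n\cG \cong p^n\cG/p^{n+1}\cG$ for all $n \geq 1$. Combining with the previous step, for every $n \geq k+1$,
\[
\DD(H_n) \;=\; \cT_{n-1}/\cT_n \;=\; \cI_{n-1}/\cI_n \;\cong\; p^{n-1}\cG/p^n\cG \;\cong\; p^n\cG/p^{n+1}\cG \;\cong\; \DD(H_{n+1})
\]
as Dieudonn\'e crystals on $C$.

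To finish I would invoke the anti-equivalence between $p$-torsion finite locally free group schemes on $C$ of truncated BT level $1$ and their Dieudonn\'e crystals --- a level-$1$ refinement of Theorem \ref{equiv on curve}, obtainable from \cite{deJ} since each $H_n$ sits inside the genuine BT group $X[p^\infty]$. Applying this anti-equivalence to the isomorphism above gives $H_n \cong H_{n+1}$ for $n \geq k+1$, which is the desired statement. The main obstacle I anticipate is precisely this last step: Theorem \ref{equiv on curve} as cited concerns honest BT groups, so one must be careful to justify that the crystal-level isomorphism actually lifts to a group-scheme isomorphism of the $p$-torsion pieces $H_n$, using that $H_n$ is BT1 at least at the generic point and hence everywhere by flatness.
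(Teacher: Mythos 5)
Your proof is correct and follows essentially the same route as the paper: identify $\DD(H_n)=\cT_{n-1}/\cT_n$ with a subquotient of $\cG$ via $\g$ and Corollary \ref{same filtration}, use $p$-torsion-freeness of $\cG$ to see these quotients stabilize under multiplication by $p$, and then pass back to the group schemes by the Dieudonn\'e anti-equivalence. The only cosmetic difference is that where the paper invokes the Artin--Rees lemma to get $\g(\cT_n)=p^{n-k}\g(\cT_k)$, you observe directly that $p^n\cG\subset\im\g$ forces $\g(\cI_n)=p^n\cG$ for $n\geq k$; both yield the same stabilization.
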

\begin{proof}
 By \ref{same filtration}, we have $\g(\cT_n)=p^n \DD(G) \cap \g(\cE)$. By Artin-Rees lemma, there exists an integer $k$ such that $\g(\cT_n)=p^{n-k}\g(\cT_k)$ for any integer $n>k$.  Since via \ref{same filtration}, $\ker \g \subset \cT_n$ for each $n$, $\cT_{n-1}/\cT_n \cong \g(\cT_{n-1})/\g(\cT_n)\cong \g(\cT_k) \otimes \cO_{\tilde C}[\frac{1}{p}] \cong  \g(\cT_{n-2})/\g(\cT_{n-1}) \cong \cT_{n-2}/\cT_{n-1}.$ So $H_n \cong H_{n+1}$ for $n>k$.
\end{proof}

Fix the integer $k$ in the proof of \ref{H}. Let $\r_\eta(G_\eta)^-$ denote the union $\cup_n K_n=\cup_n \r_\eta(G_\eta[p^n])$ and $H$ denote $\r_\eta(G_\eta)^-/K_k$. Then we have 
\begin{prop}
$H$ is a BT group over $C$. 
\end{prop}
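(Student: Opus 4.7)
The plan is to identify, for each $n \geq 0$, the candidate $L_n := K_{k+n}/K_k$ with the $p^n$-torsion $H[p^n]$, verify that the $L_n$ form a compatible system of truncated BT groups of height equal to the rank of the stable value $\bar H$ of $H_i$ for $i > k$ supplied by Proposition \ref{H}, and conclude that $H = \bigcup_n L_n$ is a BT group over $C$.

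\emph{Finite flatness.} By Proposition \ref{H}, all $H_i$ for $i > k$ are isomorphic to one finite locally free group scheme $\bar H$. Applied to the filtration $K_k \subset K_{k+1} \subset \cdots \subset K_{k+n}$, whose successive quotients are $H_{k+1},\ldots,H_{k+n}$, an induction presents $L_n$ as an $n$-fold extension of copies of $\bar H$ by finite locally free group schemes; hence $L_n$ is finite locally free over $C$ of order $|\bar H|^n$.

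\emph{$p^n$-torsion and BT compatibilities.} To see $p^n L_n = 0$, I would show $p^n K_{k+n} \subset K_k$ as closed subschemes of $X[p^\infty]$. This holds on the generic fiber because $\rho_\eta$ is injective (Lemma \ref{generically injective}), giving $p^n K_{k+n,\eta} = \rho_\eta(p^n G_\eta[p^{k+n}]) = \rho_\eta(G_\eta[p^k]) = K_{k,\eta}$. Since $K_{k+n}$ is $C$-flat and $K_k$ is the scheme-theoretic closure of $K_{k,\eta}$ in $X[p^\infty]$, flatness forces the composition $K_{k+n} \xrightarrow{[p^n]} X[p^\infty]$ to factor through $K_k$ globally. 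The same pattern handles $L_n = L_{n+1}[p^n]$ and surjectivity of $p: L_{n+1} \twoheadrightarrow L_n$ as fppf sheaves: in each case both sides of the claimed equality are finite locally free closed subgroup schemes of $L_{n+1}$ that agree on the generic fiber and, by rank stabilization, have the same order over $C$.

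With these in place, $H = \bigcup_n L_n$ is $p$-divisible, $p$-torsion, and has $H[p] = \bar H$ finite locally free — hence is a BT group over $C$. The principal difficulty is the shuttle between fppf operations (kernel, image, quotient) and scheme-theoretic closures, which Proposition \ref{H} resolves by converting generic-fiber identities into global identities via a rank count of the resulting finite flat group schemes.
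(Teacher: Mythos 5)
Your overall strategy matches the paper's: both arguments run on Proposition \ref{H} (stabilization of the graded pieces $H_i$ for $i>k$) together with the principle that an identity of $C$-flat closed subschemes of $X[p^\infty]$ verified at the generic point propagates to all of $C$ by taking scheme-theoretic closures. Your verification that $L_n=K_{k+n}/K_k$ is finite locally free of order $|\bar H|^n$, and that $p^nK_{k+n}\subset K_k$ (hence $H$ is $p$-torsion), is correct and in fact makes explicit some steps the paper leaves implicit.

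The gap is in your third step, which is the crux ($p$-divisibility). To run the rank count for $L_n=L_{n+1}[p^n]$ and for $\mathrm{im}\,(p\colon L_{n+1}\to L_n)=L_n$ you assert that both sides are finite locally free closed subgroup schemes of the same order. But $L_{n+1}[p^n]$ is the kernel of an endomorphism of a finite locally free group scheme over the curve, and such kernels need not be flat: it could equal $L_n$ generically while carrying extra non-flat components over finitely many closed points, and then it has no well-defined order to count. Likewise the fppf image of $p$ is $L_{n+1}/L_{n+1}[p]$, whose order you cannot compute without already knowing $L_{n+1}[p]=L_1$, which is one of the statements under proof; the closure argument only yields the easy inclusion $L_n\subset L_{n+1}[p^n]$ and schematic dominance of $[p]$ onto $L_n$, not fppf surjectivity. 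The paper sidesteps exactly this by working with the quotients $L_{m+1}/L_m=H_{k+m+1}$, which \emph{are} finite locally free by construction: multiplication by $p$ induces $H_{k+n+1}\to H_{k+n}$, an isomorphism by (the proof of) Proposition \ref{H}, and a snake-lemma induction in the abelian category of fppf sheaves then gives surjectivity of $p\colon L_{n+1}\to L_n$. You should replace the order count at this point by that cokernel argument, or else supply a separate proof that $L_{n+1}[p]$ is flat of order $|\bar H|$.
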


\begin{proof}
Obviously $\r_\eta(G_\eta)^-/K_k$ is $p$-torsion. And we have $\r_\eta(G_\eta)^-/K_k[p]=H_{k+1}$ is a finite locally free group scheme. It remains to show that $H$ is $p$-divisible. From Proposition \ref{H}, $H[p^2] \xrightarrow{p} H[p]$ is surjective. Now we proceed by induction. Suppose $H[p^n] \xrightarrow{p} H[p^{n-1}]$ is surjective. The following diagram is always commutative: 
\[\xymatrix{
H[p^n] \ar@{->>}[r]^p \ar@{^{(}->}[d] & H[p^{n-1}] \ar@{^{(}->}[d] \\
H[p^{n+1}] \ar[r]^p & H[p^n].
}\] 
Again by \ref{H}, the induced morphism on cokernels $H[p^{n+1}]/H[p^n] \ra H[p^n]/H[p^{n-1}]$ is isomorphic. Hence $H[p^{n+1}] \xrightarrow{p} H[p^n]$ is surjective. Therefore $H$ is $p$-divisible.
\end{proof}

Note $H$ is a subquotient BT group of $X[p^\infty]$. The following standard trick  follows from the proof of Theorem 2.6 in \cite{deJ2}.

Let $Z_k=X/K_k$. Then $Z_k$ is an abelian scheme over $C$ and $H$ is a sub BT group of $Z_k[p^\infty]$.  Put $Z'_n=Z_k/H[p^n]$. We have an exact sequence of truncated BT group schemes of level 1 over $C$ as follows
\[0\ra H[p] \ra Z_k[p] \ra Z'_n[p] \ra H[p] \ra 0.\] 
And hence an exact sequence 
\[0 \ra \o_H \ra \o_{Z'_n} \ra \o_{Z_k} \ra \o_H \ra 0.\]
We conclude that $\det(\o_{Z'_n})\cong \det(\o_{Z_k})$ independent of $n$. It is known (\cite{Zarhin}) that this implies there are only a finite number of isomorphism classes of abelian schemes among $Z'_n$. So we can find an abelian scheme $Z'_l$ such that there exists infinitely many $f_n \in \Hom(Z_k, Z'_l)$ such that $\ker f_n=H[p^n]$. Fix a $f_{n_0}$. And let $g_n \in \End(Z_k)$ be $ (f_{n_0})^{-1}\circ f_n $. Then $\ker g_n[p^\infty]$ is an extension of a fixed finite group scheme and $H[p^n]$. Let $g$ be the limit of $g_n$ in $\End(Z_k)$ and hence $\ker g[p^\infty]$ is an extension of a finite group scheme and $H$.  Therefore $\im g\subset Z_k$ is a proper subvariety. Since $Z_k$ and $X$ are isogenous, $g$ induces a morphism in $\End(X/C)$ which is not surjective. It contradicts to the assumption $X/C$ is simple. 

This is the end of the proof of \ref{simple and irreducible}.

Note by \ref{rep side}, we have
 \begin{equation} \label{*}
\dim_{B(k_0)}(\wedge^4 E)^{G_E}=1, \End(\wedge^2 E)^{G_E} \cong \Q^{\times 4}_{p^f}. \end{equation}

\begin{prop} \label{irreducibility1}
The representation $E$ is irreducible.
\end{prop}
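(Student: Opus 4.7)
The plan is to invoke the simplicity of $X/C$ established in the preceding subsection together with Theorem \ref{simple and irreducible}: since $X/C$ is simple, $\cE$ is irreducible as an $F$-isocrystal on $C$. What the proposition asserts is the stronger statement that $E$ is irreducible as a $G_E$-representation, which under the Tannakian duality $F^f$-isoc$(C)\simeq \Rep_{\Q_{p^f}}(G_\univ)$ amounts to the irreducibility of $\cE$ as an $F^f$-isocrystal. Bridging the gap between $F$- and $F^f$-irreducibility is the whole content of the argument.

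To bridge the gap I would argue by contradiction. Suppose there is a proper sub-$F^f$-isocrystal $\cE_1 \subsetneq \cE$, corresponding to a proper $G_E$-stable subspace of $E$. Form the Frobenius saturation
\[
\cE_1^{\mathrm{sat}} \;=\; \sum_{i=0}^{f-1} F^{i}\bigl(\sigma^{i*}\cE_1\bigr) \;\subset\; \cE.
\]
Each summand is itself a sub-$F^f$-isocrystal of the same rank as $\cE_1$, and a direct check (using $F^f(\sigma^{f*}\cE_1)\subset \cE_1$) shows that the whole sum is stable under $F\colon \sigma^*\cE \to \cE$, hence is a sub-$F$-isocrystal of $\cE$. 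By the $F$-irreducibility just recalled, $\cE_1^{\mathrm{sat}} = \cE$. Passing to an $F^f$-semisimplification, we conclude that $\cE$ decomposes as a sum of Galois-conjugate pieces of common rank $r\mid 8$ with $r<8$, so that the corresponding decomposition $E = E_1 \oplus \cdots \oplus E_s$ ($s=8/r>1$) is cyclically permuted by Frobenius.

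It then remains to exclude $s>1$ using Lemma \ref{rep side}: $\dim_{\Q_{p^f}}(\wedge^4 E)^{G_E} = 1$ and $\End(\wedge^2 E)^{G_E} \cong \Q_{p^f}^{\times 4}$. Expanding $\wedge^2 E$ and $\wedge^4 E$ in terms of the summands $E_i$ and their exterior powers, and using that Frobenius cycles the $E_i$, a case analysis on $s\in\{2,4,8\}$ shows that neither the prescribed one-dimensional invariant line in $\wedge^4 E$ nor the four-factor commutative algebra structure on $\End(\wedge^2 E)^{G_E}$ can be achieved unless $s=1$. I expect the hardest step to be the bookkeeping inside $\End(\wedge^2 E)$: one must match the Galois-permutation pattern among the summands $\wedge^2 E_i$ and $E_i\otimes E_j$ with the multiplicities and $\Hom$ groups that can contribute a factor of $\Q_{p^f}$, and verify that the total number of factors cannot be four when $s>1$. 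Once this case analysis is dispatched, $E$ is irreducible.
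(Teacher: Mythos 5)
Your overall route is the paper's: deduce $F$-irreducibility of $\cE$ from simplicity of $X/C$ via Theorem \ref{simple and irreducible}, saturate a putative proper sub-$F^f$-isocrystal under Frobenius to force a direct-sum decomposition of $E$ into Frobenius-conjugate pieces of equal rank, and then eliminate $s\in\{2,4,8\}$ using the two numerical conditions of Lemma \ref{rep side}. Two points, however, keep the proposal from being a proof. First, your reduction step is looser than it needs to be: from an arbitrary proper sub $\cE_1$, the saturation $\sum_i F^i(\sigma^{i*}\cE_1)$ equals $\cE$ but is not a priori a \emph{direct} sum, and ``passing to an $F^f$-semisimplification'' replaces $E$ by a different representation, to which the hypotheses $\dim(\wedge^4 E)^{G_E}=1$ and $\End(\wedge^2 E)^{G_E}\cong\Q_{p^f}^{\times 4}$ do not automatically transfer (note that at this stage reductivity of $G_E$ is not yet available --- it is a consequence of this very proposition). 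The paper avoids this by choosing $\cV\subset\cE$ irreducible of \emph{minimal} rank, so that $\sum_n F^n(\cV^{\sigma^n})$ is a sum of irreducibles, hence semisimple, and equals $\cE$ itself; this is what legitimately yields $E\cong\oplus_i V_i$ with equal-rank summands.

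Second, and more seriously, the decisive case $s=2$ is exactly where the hand-waving sits, and the $\End(\wedge^2 E)$ bookkeeping you propose will not close it: with $E=V_1\oplus V_2$ of rank $4$ each, $\wedge^2 E=\wedge^2V_1\oplus\wedge^2V_2\oplus V_1\otimes V_2$ gives only three visible blocks whose further decomposition is not controlled, so no contradiction with $\Q_{p^f}^{\times 4}$ is forced. The missing idea is the polarization: $E$ carries a symplectic form $\l$ whose self-product generates the one-dimensional space $(\wedge^4 E)^{G_E}$, and in either case ($\l$ preserving the two summands, or inducing $V_1\cong V_2^{\vee}$) the class $\l^2$ splits into two nonzero $G_E$-invariant components --- in $\wedge^4V_1\oplus\wedge^4V_2$ in the first case, in $S^2V_1\otimes S^2V_2\oplus\wedge^2V_1\otimes\wedge^2V_2\subset S^2(V_1\otimes V_2)$ in the second --- contradicting $\dim(\wedge^4 E)^{G_E}=1$. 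Without this input the case $s=2$ remains open, so the argument as written has a genuine gap. (The cases $s=4,8$ are indeed dispatched by counting summands of $\wedge^2 E$ against the four-factor endomorphism algebra, as you indicate.)
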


\begin{proof}
If $E$ is not an irreducible $G_E$-representation, then $E$ has a proper sub-representation. Let $V$ be an irreducible sub-representation of $E$ with the smallest dimension. Then $V$ gives a proper sub object $\cV\subset \cE$ with minimal rank. Since $F: \cE^\s \ra \cE$ is an isomorphism between isocrystals, $F(\cV^\s)$ is also irreducible of the smallest rank and hence $F(\cV^\s) \cap \cV = 0$.  Consider $\sum_n F^n(\cV^{\s^n})$. It is a proper sub-isocrystal of $\cE$ and invariant under $F$. By \ref{simple and irreducible}, $\sum_n F^n(\cV^{\s^n}) =\cE$. As a quotient of the sum of irreducible elements, 
\[\cE\cong \oplus_{i\in I} \cV^{\s^i}\]
for some index set $I$. 

Therefore $E\cong \oplus_i V_i$. Since each $V_i$ has the same rank, the number of direct summands is either 1, 2, 4 or 8.  This number is not greater than 2 otherwise it violates $\End(\wedge^2 E)^{G_E} \cong \Q^{\times 4}_{p^f}$. If there are two direct summands, let 
\[E \cong V_1 \oplus V_2.\] Note $E$ admits a symplectic form $\l$ and $(\wedge^4 E)^{G_E}$ is generated by the self-product of $\l$. 

If $\l$ preserves the direct summands $V_1$ and $V_2$, then in the decomposition  
\[\wedge^4 E \cong \wedge^4 V_1 \oplus \wedge^4 V_2 \oplus \cdots,\]
 the self product $\l^2$ has nontrivial components in $\wedge^4 V_1$ and $\wedge^4 \cV_2$. Let $\l^2_1$ and $\l^2_2$ be the two components. Then both of them are invariant under $G_E$, contradicting to  $\dim_{B(k_0)}(\wedge^4 E)^{G_E}=1$. 
 
If the polarization does not preserve the direct summands, then $\l$ induces isomorphisms $V_1 \ra {V_2}^\vee$ and $V_2 \ra V^\vee_1$. Note $\wedge^2 E \cong \wedge^2 V_1 \oplus \wedge^2 V_2 \oplus V_1 \otimes V_2$ and there is a surjection 
\[S^2(\wedge^2 E) \ra \wedge ^4 E, w_1.w_2 \mapsto w_1 \wedge w_2.\]
 And then $\l^2$ lies in 
\[S^2(V_1 \otimes V_2) \cong S^2 V_1 \otimes S^2 V_2 \oplus \wedge^2 V_1 \otimes \wedge^2 V_2.\]
Let the $\l^2_1$ and $\l^2_2$ be the image $\l$ in the two components in $S^2(V_1 \otimes V_2)$. Then they are invariant under $G_E$, again contradicting to $\dim_{B(k_0)}(\wedge^4 E)^{G_E}=1$.
\end{proof}

\begin{cor}\label{irreducibility}
The group $G_E$ is reductive. 
\end{cor}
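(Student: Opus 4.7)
The plan is to deduce reductivity from the fact that $E$ is a faithful irreducible $G_E$-representation, via the standard argument that a connected algebraic group in characteristic zero with a faithful irreducible representation is reductive.

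First I would record the two ingredients. By construction $G_E=\im(G_\univ\to\Aut(E))$, so $G_E$ is a subgroup of $\Aut(E)$ and hence acts faithfully on $E$. The preceding Proposition \ref{irreducibility1} supplies that $E$ is an irreducible $G_E$-representation. Note also that the Tannakian category of $F^f$-isocrystals is linear over the characteristic zero field $\Q_{p^f}$, so the theory of the unipotent radical applies in its familiar form.

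Now let $U=R_u(G_E^\circ)$ be the unipotent radical of the identity component; it is a normal closed subgroup of $G_E$. The subspace $E^U\subset E$ of $U$-fixed vectors is $G_E$-stable because $U$ is normal in $G_E$: for $g\in G_E$, $u\in U$, and $v\in E^U$ one has $u(gv)=g(g^{-1}ug)v=gv$ since $g^{-1}ug\in U$. On the other hand, because $U$ is unipotent and acts on the nonzero finite-dimensional $\Q_{p^f}$-vector space $E$, a standard Lie--Kolchin/fixed-vector argument gives $E^U\neq 0$.

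Combining these two observations with the irreducibility of $E$ forces $E^U=E$, so $U$ acts trivially on $E$. Since $G_E$ acts faithfully on $E$ (as $G_E\subset\Aut(E)$), we conclude $U=1$, i.e.\ the unipotent radical of $G_E^\circ$ is trivial, so $G_E$ is reductive. The argument is short and the only subtle point is to make sure that in passing to $G_E$ from $G_\univ$ we have retained the faithfulness of the action on $E$, which follows immediately from the definition of $G_E$ as the image.
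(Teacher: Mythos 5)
Your proposal is correct and is essentially the paper's argument: the paper's proof of this corollary is the one-line observation that a group with a faithful completely reducible representation is reductive, citing \ref{irreducibility1} for irreducibility, and your write-up simply fills in the standard unipotent-radical/fixed-vector details behind that criterion. No gap; the expansion is accurate.
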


\begin{proof}
Note $G_E$ is reductive if $G_E$ admits a faithful and completely reducible representation.
\end{proof}
\subsection{$G_E$ nonsimple}

Since $X/C$ is principally polarized, the $F$-isocrystal $\cE$ admits a non-degenerate alternating form(\cite[Section 5.1]{BBM}). So $E$ also admits an alternating form which is preserved by $G_E$. Therefore the action of $G_E$ factors through $Sp(8, \Q_{p^f})$. Then the reductive Lie algebra $\frak{g}_E$ factors through $\frak{sp}(8)$. 

\begin{prop}
$G_E$ is not simple and the semisimple part $(\frak{g}_E)^{ss}_\C \cong \frak{sl}(2)^{\times 3}$.
\end{prop}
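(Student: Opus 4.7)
The plan is to identify $\mathfrak{g}^{ss} := (\mathfrak{g}_E)^{ss}_{\mathbb{C}}$ by classifying all faithful irreducible $8$-dimensional symplectic representations of a complex semisimple Lie algebra, and then to use the constraints (\ref{*}) to eliminate every candidate except $\mathfrak{sl}(2)^{\oplus 3}$. By Proposition \ref{irreducibility1} together with the symplectic form on $E$ recorded just before the statement, the action of $\mathfrak{g}^{ss}$ on $E_{\mathbb{C}}$ is faithful, irreducible, and preserves a non-degenerate alternating form.

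Writing $\mathfrak{g}^{ss} = \bigoplus_{i=1}^{k} \mathfrak{g}_{i}$ with each $\mathfrak{g}_i$ simple, $E_{\mathbb{C}} = V_{1} \boxtimes \cdots \boxtimes V_{k}$ with $V_i$ a nontrivial irreducible of $\mathfrak{g}_{i}$. Since $\dim E = 8 = 2^{3}$, the multiset $\{\dim V_i\}$ is $\{8\}$, $\{4,2\}$, or $\{2,2,2\}$; and for the tensor product of self-dual irreducibles to be symplectic the number of symplectic factors must be odd (Frobenius--Schur). Going through the low-dimensional irreducibles of simple Lie algebras, one sees that the only symplectic $8$-dimensional irreducibles of a single simple factor are the standard of $\mathfrak{sp}(8)$ and $\mathrm{Sym}^{7}$ of the standard of $\mathfrak{sl}(2)$ (the adjoint of $\mathfrak{sl}(3)$, the spin representation of $\mathfrak{so}(7)$, and all $8$-dimensional irreducibles of $\mathfrak{so}(8)$ are orthogonal); that each $\{4,2\}$-case either fails symplectic type or arises from $\mathfrak{so}(4) \cong \mathfrak{sl}(2)^{\oplus 2}$ (which is not simple, and recovers the $\{2,2,2\}$-case); and that $\{2,2,2\}$ forces $\mathfrak{g}^{ss} = \mathfrak{sl}(2)^{\oplus 3}$ acting by the triple tensor of standards. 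The three candidates are therefore
\begin{enumerate}[(i)]
\item $\mathfrak{g}^{ss} = \mathfrak{sp}(8)$ with the standard representation,
\item $\mathfrak{g}^{ss} = \mathfrak{sl}(2)$ with $\mathrm{Sym}^{7}$ of the standard,
\item $\mathfrak{g}^{ss} = \mathfrak{sl}(2)^{\oplus 3}$ with $V_1 \otimes V_2 \otimes V_3$.
\end{enumerate}

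I will then eliminate (i) and (ii). For (i), the decomposition $\wedge^{2}\mathbb{C}^{8} = \mathbb{C}\cdot\omega \oplus V(\omega_{2})$ with $V(\omega_{2})$ the $27$-dimensional second fundamental of $\mathfrak{sp}(8)$ gives $\End(\wedge^{2}E)^{G_{E}}\otimes_{\mathbb{Q}_{p^f}}\mathbb{C} \cong \mathbb{C}^{\times 2}$, violating the four-factor condition. For (ii), the weights of $\mathrm{Sym}^{7}$ are $\{\pm 1, \pm 3, \pm 5, \pm 7\}$, so the trivial multiplicity in $\wedge^{4}\mathrm{Sym}^{7}$ equals the number of $4$-element subsets summing to zero; a short enumeration (six pairings into two opposite pairs, plus $\{7,-5,-3,1\}$ and its negation) yields $8$, contradicting $\dim(\wedge^{4}E_{\mathbb{C}})^{G_{E}} = 1$. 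Only (iii) survives, so $(\mathfrak{g}_E)^{ss}_{\mathbb{C}} \cong \mathfrak{sl}(2)^{\oplus 3}$ and $G_E$ is not simple; the consistency of (iii) with (\ref{*}) is the computation already recorded in Section \ref{example}. The hard part will be the enumeration itself---keeping track of Frobenius--Schur signs across the mixed-rank tensor cases and correctly reducing the hybrid $\mathfrak{sl}(2)\boxtimes\mathfrak{so}(4)$ factorization back into the triple-$\mathfrak{sl}(2)$ form.
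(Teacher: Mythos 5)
Your overall strategy---classify the faithful irreducible $8$-dimensional symplectic representations of a semisimple Lie algebra, then use (\ref{*}) to eliminate every candidate except $\frak{sl}(2)^{\times 3}$---is the same as the paper's: Appendix \ref{simple groups} carries out the classification and elimination for simple factors, and the $\{4,2\}$ tensor factorization is disposed of in the text following the proposition. Your use of Frobenius--Schur indicators to organize the non-simple cases is tidier than the paper's case-by-case dimension formulas, your list of three surviving candidates agrees with the paper's, and your elimination of $\frak{sp}(8)$ via $\wedge^2\C^8=\C\cdot\omega\oplus V(\omega_2)$ is correct and identical in substance to the paper's.

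The gap is in your elimination of case (ii). The number of $4$-element subsets of $\{\pm1,\pm3,\pm5,\pm7\}$ summing to zero is the dimension of the \emph{zero weight space} of $\wedge^4\mathrm{Sym}^7$, not the multiplicity of the trivial representation; for $\frak{sl}(2)$ the latter is $\dim W[0]-\dim W[2]$. Your count $\dim W[0]=8$ is right, but $\dim W[2]=7$ (six subsets with two positive and two negative weights, plus $\{1,3,5,-7\}$), so the trivial multiplicity is $8-7=1$, not $8$. As computed, the condition $\dim(\wedge^4 E)^{G_E}=1$ therefore does \emph{not} exclude $\mathrm{Sym}^7$; worse, $\wedge^2\mathrm{Sym}^7\cong V_{12}\oplus V_8\oplus V_4\oplus V_0$ has exactly four pairwise non-isomorphic summands, so the $\End(\wedge^2E)^{G_E}\cong\Q_{p^f}^{\times 4}$ condition does not obviously exclude it either. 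Note that the paper's appendix computes the same quantity through the plethysm $\wedge^4(S^7V)\cong S^4(S^4V)$ and asserts the answer is $3$; your $8$, the paper's $3$, and the weight-count value $1$ cannot all be correct, and until this discrepancy is resolved the exclusion of the principal $\frak{sl}(2)$---the only step separating the conclusion from a surviving second candidate---is not established. You need either a correct invariant-theoretic computation that actually distinguishes $\mathrm{Sym}^7$ from $V_1\otimes V_2\otimes V_3$, or an additional input beyond conditions (2) and (3) (for instance the symplectic weight-one/ordinariness constraints) to rule this case out.
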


\begin{proof}

If $G_E$ is simple, then the Lie algebra $\frak{g}_E$ is a simple  subalgebra of $\frak{sp}(8, \Q_{p^f})$. Then condition \ref{*} can be stated in terms of Lie algebra:
\[\dim_{\Q_{p^f}}(\wedge^4 E)^{\frak{g}_E}=1, \End(\wedge^2 E)^{\frak{g}_E}  \cong \Q^{\times 4}_{p^f}.\]

Base change to $\C$. By Appendix \ref{simple groups}, there is no simple complex Lie algebras satisfying the conditions above. Therefore ${G_E}_\C$ is not simple. Then the semisimple part $\frak{g}^{ss}_\C=\frak{sl}(2)\times \frak{sl}(2) \times \frak{sl}(2)$ and $E_\C=E_1 \otimes E_2 \otimes E_3$ where $E_i$ is the standard representation of $\frak{sl}(2)$. Therefore 
\begin{multline}
\wedge^2 E_\C \cong (S^2 E_1 \otimes S^2 E_2 \otimes \wedge^2 E_3 )\oplus (\wedge^2 E_1 \otimes S^2 E_2 \otimes S^2 E_3 ) \oplus \\ (S^2 E_1 \otimes \wedge^2 E_2 \otimes S^2 E_3 )\oplus (\wedge^2 E_1 \otimes \wedge^2 E_2 \otimes \wedge^2 E_3)
\end{multline} as direct sum of irreducible representations. Therefore $\End_{G_E}(\wedge^2 E)_\C \cong \C^{\oplus 4}$ and $\wedge^2 E_1 \otimes \wedge^2 E_2 \otimes \wedge^2 E_3$ is the polarization. 

For $\frak{g}_E$,  $\End_\frak{g}(\wedge^2 E)\cong\Q^{\times 4}_{p^f}$. Therefore each of the 4 components of $\wedge^2 E_\C$ is defined over $\Q_{p^f}$. Note they are all nonfaithful nontrivial representations of $\frak{g}$. Thereby $\frak{g}$ and then $G_{E}$ is not simple. 
\end{proof}

So the Lie algebra $\frak{g}^{ss}_E=\frak{g}_1 \times \cdots \times \frak{g}_n$. Then $E=E_1\otimes \cdots\otimes E_n$, where $n\geq 2$, the $\frak{g}_i$ are simple Lie algebras and $E_i$ is a faithful representation of $\frak{g}_i$. Clearly at least one of the $E_i$ say $E_1$, has dimension 2, and this implies that $\frak{g}_1=\frak{sl}(2)$. Then $E_2 \otimes \cdots \otimes E_n$ is a 4-dimensional representation of $\frak{g}_2 \times \cdots \times \frak{g}_n$. Since $E_1$ is already symplectic, $E_2 \otimes \cdots \otimes E_n$ must be orthogonal. For simplicity, let $\frak{h}=\frak{g}_2 \times \cdots\times \frak{g}_n$ which acts orthogonally on $E_2 \otimes \cdots \otimes E_n$. Let us denote it as $W$.

\begin{cor}
\begin{equation} \label{the first decomposition}
\wedge^2 E\cong \Q_{p^f} \oplus W_1 \otimes W_2 \oplus W_1 \otimes W_3 \oplus W_2 \otimes W_3
\end{equation} over $\Q_{p^f}$ where each $W_i$ is a dimension 3 representation of $G_E$. Further $W_i$ admits a symmetric product. 
\end{cor}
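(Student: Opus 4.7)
The plan is to use the tensor factorization $E = E_1 \otimes W$ just established, compute $\wedge^2 E$ in terms of it, and then force every component to descend to $\Q_{p^f}$ by exploiting the rigidity of the algebra hypothesis $\End_{G_E}(\wedge^2 E) \cong \Q_{p^f}^{\times 4}$.

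First, I would apply the identity $\wedge^2(A\otimes B) \cong \wedge^2 A \otimes S^2 B \oplus S^2 A \otimes \wedge^2 B$ with $A = E_1$ and $B = W$. Because $E_1$ is 2-dimensional with its $\frak{sl}(2) = \frak{g}_1$-invariant symplectic form, $\wedge^2 E_1 \cong \Q_{p^f}$ is trivial; and because $W$ is 4-dimensional orthogonal, $S^2 W \cong \Q_{p^f}\cdot q \oplus S^2_0 W$ for $q$ the $\frak{h}$-invariant quadratic form and $S^2_0 W$ the 9-dimensional traceless piece. Setting $W_1 := S^2 E_1$, a 3-dimensional absolutely irreducible $\frak{g}_1$-representation (the adjoint of $\frak{sl}(2)$) defined over $\Q_{p^f}$, this gives the coarse decomposition
\[\wedge^2 E \cong \Q_{p^f} \oplus S^2_0 W \oplus (W_1 \otimes \wedge^2 W)\]
over $\Q_{p^f}$.

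Next, I would push this further using the endomorphism-algebra hypothesis. The three summands above are pairwise non-isomorphic as $G_E$-modules (they have distinct dimensions and different $\frak{g}_1$-isotypic content), so $\End_{G_E}(\wedge^2 E)$ is the direct product of their individual endomorphism algebras. Since $W_1$ is absolutely irreducible over $\frak{g}_1$, one has $\End_{G_E}(W_1 \otimes \wedge^2 W) \cong \End_\frak{h}(\wedge^2 W)$. The hypothesis $\End_{G_E}(\wedge^2 E) \cong \Q_{p^f}^{\times 4}$ then forces $\End_{G_E}(S^2_0 W) = \Q_{p^f}$ and $\End_\frak{h}(\wedge^2 W) \cong \Q_{p^f}\times\Q_{p^f}$. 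In particular $S^2_0 W$ is absolutely irreducible, and $\wedge^2 W$ splits over $\Q_{p^f}$ as $W_2 \oplus W_3$ where $W_2, W_3$ are two non-isomorphic absolutely irreducible 3-dimensional $\frak{h}$-submodules. Hence $W_1 \otimes \wedge^2 W = W_1 \otimes W_2 \oplus W_1 \otimes W_3$.

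It remains to identify $S^2_0 W$ with $W_2 \otimes W_3$ over $\Q_{p^f}$ and to produce the symmetric form on each $W_i$. For the first identification: over $\bar\Q_{p^f}$, $\frak{h}$ becomes $\frak{sl}(2)\times\frak{sl}(2)$ with $W = E_2 \otimes E_3$, and the decomposition $S^2(E_2 \otimes E_3) = S^2 E_2 \otimes S^2 E_3 \oplus \wedge^2 E_2 \otimes \wedge^2 E_3$ identifies both $S^2_0 W$ and $W_2 \otimes W_3$ with $S^2 E_2 \otimes S^2 E_3$; Schur then gives $\dim_{\Q_{p^f}}\Hom_\frak{h}(S^2_0 W, W_2 \otimes W_3) = 1$ and any nonzero element is an isomorphism. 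For the symmetric form: over $\bar\Q_{p^f}$ each $W_i$ is the adjoint of an $\frak{sl}(2)$-factor and so carries a Killing form; absolute irreducibility forces $\dim_{\Q_{p^f}}\Hom_{G_E}(W_i \otimes W_i, \Q_{p^f}) = 1$, and since symmetry is a linear condition stable under base change, this form descends to a $G_E$-invariant symmetric bilinear form on $W_i$ over $\Q_{p^f}$.

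The main subtlety lies in the second step: one must rule out the possibility that $\End_\frak{h}(\wedge^2 W)$ is a nontrivial quadratic field extension $F/\Q_{p^f}$, which could a priori occur if $\frak{h}$ is simple over $\Q_{p^f}$ and splits as $\frak{sl}(2)\times\frak{sl}(2)$ only over $\bar\Q_{p^f}$, so that $\wedge^2 W$ is simple over $\Q_{p^f}$ but not absolutely simple. The $\Q_{p^f}$-algebra-level hypothesis $\End_{G_E}(\wedge^2 E) \cong \Q_{p^f}^{\times 4}$ is exactly what excludes this: a non-split factor $F$ would contradict the split form $\Q_{p^f}^{\times 4}$, and this same rigidity underlies the absolute irreducibility of $S^2_0 W$.
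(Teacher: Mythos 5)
Your proposal is correct and follows the same skeleton as the paper's proof: both start from $E=E_1\otimes W$, apply $\wedge^2(A\otimes B)\cong \wedge^2A\otimes S^2B\oplus S^2A\otimes\wedge^2B$, set $W_1=S^2E_1$, and use the hypothesis $\End_{G_E}(\wedge^2E)\cong\Q^{\times 4}_{p^f}$ (equivalently, the rationality of the four idempotents) to split $\wedge^2W=W_2\oplus W_3$ over $\Q_{p^f}$. You diverge in two places, both to your credit. First, the paper leaves the summand $S^2W$ untouched in its displayed decomposition and never explicitly identifies its traceless part with $W_2\otimes W_3$ over $\Q_{p^f}$; your Schur-lemma argument (the Hom space has $\Q_{p^f}$-dimension equal to its $\C$-dimension, namely $1$, and any nonzero map between the absolutely irreducible $9$-dimensional pieces is an isomorphism) supplies exactly the step the paper elides. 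Second, for the symmetric product on $W_2,W_3$ the paper argues indirectly: $W_1\otimes W_2$ inherits a symmetric form as a summand of $\wedge^2E$, and decomposing $S^2(W_1\otimes W_2)$ shows the unique trivial summand must sit inside $\Q_{p^f}\otimes S^2W_2$, forcing $S^2W_2$ to contain the trivial representation. You instead argue directly that $\Hom_{G_E}(W_i\otimes W_i,\Q_{p^f})$ is one-dimensional and that its generator, being symmetric after base change to $\C$ (the Killing form), is symmetric over $\Q_{p^f}$. Both arguments are valid; yours is shorter and makes the descent mechanism transparent, while the paper's has the mild advantage of only using the ambient form on $\wedge^2E$ rather than self-duality of the $W_i$. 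Your closing remark correctly isolates the real content of the hypothesis, namely ruling out a quadratic \'etale algebra inside $\End_{\frak{h}}(\wedge^2W)$, which is also precisely what the paper means by ``the four idempotents are defined over $\Q_{p^f}$.''
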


\begin{proof}
Since $E=E_1 \otimes_{\Q_{p^f}} W$, 
$\wedge^2 E\cong S^2 W \oplus S^2 E_1 \otimes \wedge^2 W$. Let $W_1$ be $S^2 E_1$. Since the four idemponents in $\End_{G_E}(\wedge^2 E)_\C$ are defined over $\Q_{p^f}$, comparing with the decomposition over $\C$, $\wedge^2 W$ is the direct sum of two rank 3 representations, say $W_2$, $W_3$ and then 
\[\wedge^2 E = S^2 W \oplus W_1 \otimes W_2 \oplus W_2 \otimes W_3.\] 

As a subrepresentation of $\wedge^2 E$, $W_1 \otimes W_2$ admits a symmetric product. Therefore $S^2(W_1 \otimes W_2)$ has a one-dimensional trivial direct summand. Note 
\[\begin{aligned}
S^2(W_1\otimes W_2)&=S^2W_1\otimes S^2 W_2 \oplus \wedge^2 W_1 \otimes \wedge^2 W_2 \\
&= (\Q_{p^f} \oplus W'_1) \otimes S^2 W_2 \oplus \wedge^2 W_1 \otimes \wedge^2 W_2
\end{aligned}\] 
where $W'_1$ is irreducible. The 1-dimensional direct summand can only come from $\Q_{p^f}\otimes S^2 W_2$. Therefore $S^2 W_2$ has a one-dimensional trivial direct summand and hence $W_2$, as well as $W_3$, also admits a symmetric product. 
\end{proof}

However, we don't know whether $E_i$ are defined over $\Q_{p^f}$ yet. So we can not write $W_i$ as $S^2 V_i$ for some rank 2 representation $V_i$.  To remedy this situation, we take an \'etale covering of $C$ and increase the power $f$ if necessary.  

\section{The choice of the \'etale covering} \label{the choice}

In this section, we show how to choose the \'etale covering of $C$ for the existence of $V_i$. 

We consider a more general setting: if we have a rank 3 $F^f$-isocrystal $\cW$ with symmetric product, when can we write $\cW$ as $S^2 \cV$ for some rank 2 $F^f$-isocrystal $\cV$ which corresponds to a $SL(2)$- representation? If not in general, we compute the obstruction.

Firstly as crystals, $\cW$ corresponds to a bundle with an integrable connection over $\tilde C$. 

\begin{prop} \label{bundle}
The obstruction $o_1$ to the existence of a rank 2 bundle $\cV$ over $\tilde C$ such that $\cW =S^2 \cV[\frac{1}{p}]$ is in $H^2_{\mathrm{et}}(\tilde C, \m_2).$
\end{prop}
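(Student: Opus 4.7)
The plan is to realise the existence of $\cV$ as a lifting problem for \'etale torsors across the degree two isogeny $SL(2) \twoheadrightarrow SO(3)$, and to extract $o_1$ as the resulting coboundary class in non-abelian \'etale cohomology. I would first package the data: a rank $3$ bundle $\cW$ equipped with a non-degenerate symmetric product gives an orthogonal structure on $\cW$, and hence a principal $SO(3)$-bundle $P$ on $\tilde C_{\mathrm{et}}$. Strictly, one obtains only an $O(3)$-torsor; but modulo squares, the discriminant is trivial for any bundle arising as $S^2\cV$, so one may normalise to an $SO(3)$-structure or else push the $\mathbb{Z}/2$-ambiguity into a separate, lower-order obstruction.

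Next I would use the central isogeny. Because $p > 2$, the finite group scheme $\m_2$ is \'etale, and the symmetric-square representation $SL(2) \to GL(S^2 \A^2) = GL(3)$ lands in $SO(3)$ with kernel exactly $\{\pm 1\} = \m_2$. This produces a central short exact sequence of sheaves of groups on $\tilde C_{\mathrm{et}}$,
\[
1 \longrightarrow \m_2 \longrightarrow SL(2) \longrightarrow SO(3) \longrightarrow 1.
\]
The long exact sequence of non-abelian \'etale cohomology then yields a coboundary map of pointed sets
\[
\delta \colon H^1_{\mathrm{et}}(\tilde C, SO(3)) \longrightarrow H^2_{\mathrm{et}}(\tilde C, \m_2),
\]
exact at the middle term. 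Set $o_1 := \delta([P])$; by construction $o_1 = 0$ precisely when $P$ admits a lift to a principal $SL(2)$-bundle $\tilde P$.

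Given any such lift $\tilde P$, setting $\cV := \tilde P \times^{SL(2)} \A^2$ produces a rank $2$ bundle on $\tilde C$, and functoriality of the associated-bundle construction forces $S^2 \cV$ to coincide with the bundle attached to the $SO(3)$-torsor recovered from $\tilde P$, namely $P$ itself, i.e.\ with $\cW$ after inverting $p$ (the role of the $[\tfrac{1}{p}]$ being to absorb any residual integral twist). The main technical obstacle I anticipate is the $O(3)$ versus $SO(3)$ bookkeeping: one must verify that the discriminant class in $H^1_{\mathrm{et}}(\tilde C, \mathbb{Z}/2)$ is harmless for the subsequent application, either because it is automatically trivial under the hypotheses of Theorem \ref{main thm}, or because it can be killed by enlarging the finite \'etale cover of $C$ that is being constructed in this section (consistent with the fact that the paper already allows itself such further covers).
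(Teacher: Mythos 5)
Your proof is correct and is essentially the paper's argument in different dress: the conic bundle cut out in $\PP(\cW)$ by the symmetric form is exactly the Severi--Brauer scheme of the $SO(3)\cong PGL(2)$-torsor you construct, and the ``special Brauer class'' the paper invokes is precisely your coboundary $\delta([P])\in H^2_{\mathrm{et}}(\tilde C,\mu_2)$ for the central extension $1\to\mu_2\to SL(2)\to PGL(2)\to 1$. The $O(3)$-versus-$SO(3)$ normalisation you flag is the only point where the two formulations differ, and the paper's conic-bundle phrasing silently absorbs it (a conic depends on the form only up to scalar), so no further content is needed.
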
 

\begin{proof}
Note the symmetric form $( , )$ gives an element in $\cO_{\PP(\cW)}(2)$. So it defines a conic bundle over $\tilde C$. If this conic bundle is isomorphic to $\PP(\cV)$ for some bundle $\cV$ on $\tilde C$, then $\cW\cong S^2 \cV$ and the symmetric product $(, )$ on $\cW$ is a scalar multiple of $S^2<,>$ where $<,>$ is an alternating form on $\cV$. So the obstruction is just the special Brauer class in $H^2_{\mathrm{et}}(\tilde C, \mu_2)$.
\end{proof} From the proof, if $\cW=S^2\cV$, we have $(,)=aS^2<,>$ for some $a\in \Q^*_{p^f}$. In particular, $(x^2,x^2)=0$ for any local section $x$ of $\cV$. 

\begin{rmk}
For the dimension 3 $PGL(2)$-representation $W$ and standard representation $V$ of $SL(2)$, $W=S^2V$ and there exists a 2-uple embedding $\PP(V)\cong \PP^1  \hookrightarrow \PP(W)$.
\end{rmk}

\begin{prop} \label{connection}
If $o_1=0$, then there exists a rank 2 bundle with connection $(\cV, \nabla_\cV)$ on $\tilde C$ such that $S^2\cV \cong \cW$ as modules with connection.
\end{prop}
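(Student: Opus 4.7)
The plan is to leverage the classical accidental isomorphism between $SL(2)$ and $SO(3)$ at the Lie-algebra level in order to transfer the integrable connection $\nabla_\cW$ to a connection on $\cV$. Proposition \ref{bundle} already supplies a rank 2 bundle $\cV$ on $\tilde C$ with $\cW \cong S^2\cV$ as bundles; the task is to endow $\cV$ with an integrable connection $\nabla_\cV$ so that $S^2\nabla_\cV$ is identified with $\nabla_\cW$ under this isomorphism.

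The key input is that the representation $S^2\colon GL_2 \to GL_3$ factors through the conformal orthogonal group $CO_3 = SO_3 \cdot \GG_m$, and that its derivative is a Lie-algebra isomorphism $d(S^2)\colon \mathfrak{gl}(\cV) \xrightarrow{\sim} \mathfrak{co}(\cW) = \mathfrak{so}(\cW) \oplus \cO$. Concretely, I would pick an open $U \subset \tilde C$ over which $\cV$, and hence $\cW = S^2\cV$, is trivialized compatibly, and write $\nabla_\cW|_U = d + A_U$ with $A_U \in \mathfrak{gl}(\cW) \otimes \Omega^1_U$. Assuming $A_U \in \mathfrak{co}(\cW) \otimes \Omega^1_U$ (see below), the isomorphism yields a unique $B_U \in \mathfrak{gl}(\cV) \otimes \Omega^1_U$ with $d(S^2)(B_U) = A_U$, and I would set $\nabla_\cV|_U = d + B_U$. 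Uniqueness of $B_U$ forces the local pieces to agree on overlaps and hence to glue into a global connection $\nabla_\cV$ on $\cV$; integrability is automatic since $d(S^2)$ is injective on brackets, so the curvature of $\nabla_\cV$ maps to the vanishing curvature of $\nabla_\cW$ and must therefore vanish. By construction $S^2\nabla_\cV$ agrees with $\nabla_\cW$.

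The main obstacle is the step of showing $A_U \in \mathfrak{co}(\cW) \otimes \Omega^1_U$, i.e.\ that $\nabla_\cW$ preserves the symmetric form $(\,,\,)$ up to a scalar 1-form. I would derive this from the Tannakian origin of $(\,,\,)$: the symmetric product on $W = W_i$ is a $G_E$-equivariant map $S^2 W \to L$ for some $1$-dimensional representation $L$, hence corresponds to a morphism of $F^f$-isocrystals $S^2 \cW \to \cL$, which is automatically horizontal for the induced connections. Translating back to local matrices, the trace-free part of $A_U$ is forced into $\mathfrak{so}(\cW)$, while the scalar part is absorbed by a suitable connection on $\det \cV$; note that the compatibility $\det \cW = (\det \cV)^{\otimes 3}$ dictates this scalar in terms of the trace of $A_U$. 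Once this horizontality is in hand, the remainder of the argument is pure Lie-algebra bookkeeping via the uniqueness provided by $d(S^2)$.
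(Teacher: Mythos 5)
Your first half is essentially the paper's argument in different clothing: the horizontality of the symmetric form (which does follow from its Tannakian origin, as you say) lets you transfer $\nabla_\cW$ to $\cV$, and your route through the Lie-algebra isomorphism $\mathfrak{gl}(\cV)\cong\mathfrak{co}(\cW)$ is a coordinate-free packaging of the paper's observation that $((\nabla_\cW)_\tau(x^2),x^2)=0$ forces $(\nabla_\cW)_\tau(x^2)=x\cdot v$, whence one sets $(\nabla_\cV)_\tau(x):=v$. Your gluing and integrability remarks are fine.

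The gap is integrality. Proposition \ref{bundle} only gives $\cW=S^2\cV[\frac{1}{p}]$: the identification of $\cW$ with $S^2\cV$ is an isomorphism after inverting $p$, not an isomorphism of lattices. Consequently, in a trivialization of $\cV$ by $x,y$ the matrix $A_U$ of $\nabla_\cW$ written in the basis $x^2,xy,y^2$ has entries in $\cO_U[\frac{1}{p}]$, and so does your $B_U=d(S^2)^{-1}(A_U)$. What your construction produces is therefore a connection on $\cV\otimes\cO_{\tilde C}[\frac{1}{p}]$, i.e.\ an isocrystal, whereas the proposition demands a rank $2$ bundle with connection \emph{on} $\tilde C$, i.e.\ a crystal --- and this integrality is what the subsequent steps (descending the Frobenius, invoking the Dieudonn\'e theory) rely on. The entire second half of the paper's proof addresses exactly this point: it shows that the lattice $N'=\sum_n(\nabla_\cV)^{(n)}_\tau(N)$ is finitely generated, by proving that the orders $\mathrm{ord}(A^n)$ are bounded below (the key input being that $\nabla^n(x^2)$ must remain in a fixed lattice $p^{-k_0}S^2N$, which forces the leading coefficients of $A^n$ to vanish and eventually yields a contradiction if $\mathrm{ord}(A^n)\to-\infty$); only then does the reflexive hull $N'^{\vee\vee}$ give a genuine locally free sheaf with integral connection. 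Your proposal does not engage with this step, and without it the statement is not proved.
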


\begin{proof}
If $o_1=0$, the let $\cV$ be the rank 2 bundle on $\tilde C$ such that $S^2 \cV \cong \cW$ over $\tilde C[\frac{1}{p}]$. For any affine open subset $U\subset \tilde C$ such that $\cV(U)$ and $\O^1_{\tilde C}(U)$ are free, suppose $T_{\tilde C} (U)$ is generated by $\t$. Let $N=\cV(U)$ and $M=\cW(U)$. For any section $x\in N$, $x^2 \in M$. Since $\nabla_\cW$ is compatible with the symmetric product and $(x^2,x^2)=0$, $((\nabla_\cW)_\t (x^2), x^2)=0$. Then $(\nabla_\cW)_\t (x^2)=x.v$ for some local section $v$ of $N[\frac{1}{p}]$. Define a map $\nabla_\cV$ locally as $(\nabla_\cV)_\t(x)=v$. It is easy to check $\nabla_\cV$ is a well-defined connection and it can be defined globally over $\tilde C[\frac{1}{p}]$. Further it is easy to show $S^2 \nabla_\cV = \nabla_\cW$.

From $S^2N[\frac{1}{p}] \cong M[\frac{1}{p}]$, there is an injective morphism $S^2N \ra M$. Now locally over $U$, let $N'=\sum_n (\nabla_\cV)^{(n)}_\t(N)$ and $(S^2N)'= \sum_n(\nabla_\cV)^{(n)}_\t (S^2N)$. Then $N \subset N' \subset N[\frac{1}{p}]$. Since $(S^2N)'\subset (S^2N)[\frac{1}{p}] \cap M$, $(S^2N)'$ is noetherian and hence there exists $k_0$ such that $(S^2N)' \subset \frac{1}{p^{k_0}} S^2N$.   

Next we prove $N'$ is also finitely generated. Choose generators $\{x, y\}$ of $N$. For simplicity, we use $\nabla$ to denote $(\nabla_\cV)_\t$. Then
\[\begin{pmatrix} \nabla x \\ \nabla y\end{pmatrix}= A \binom{x}{y} \] where $A$ is a 2$\times$2 matrix with entries in $\cO_U[\frac{1}{p}]$. Then roughly
\[\binom{\nabla^n x}{\nabla^n y}=(\nabla^n A + \cdots + A^n)\binom{x}{y}.\]

For any $z\in N'$(resp. $(S^2N)'$), let the order $\ord(z)$ of $z$ be the minimal integer $-k$ such that $z \in p^{-k}N $ (resp. $p^{-k} S^2 N$). Similarly, the order $\ord(A)$ of a matrix $A$ is the minimal integer among the orders of its entries. Let $A=p^{\ord(A)}\begin{pmatrix} a &b\\c&d \end{pmatrix}$.

We explore some identities of the order. Obviously $\ord(zz')=\ord(z)+\ord(z')$,  $\ord(AB)\leq \ord(A)+\ord(B)$. Since $\nabla$ commutes with $p$,  $\ord(\nabla^n A)=\ord(A)$ for any $n$ and $\ord(A\nabla A) = \ord(A^2)$.

If the sequence $\{\ord(A^n)\}$ is bounded below, then by the identities above, $\ord(\binom{\nabla^n x}{\nabla^n y})$ is also bounded below. Thus $N'$ is finitely generated. 

Now we assume $\{\ord(A^n)\}$ is not bounded below. By direct computation, it is easy to show the sequence $\{\ord(A^n)\}$ is strictly decreasing. Without loss of generality, we can assume $\ord(\nabla x) =\ord (A)$. Then $\ord((1,0) A^n)=\ord(A^n)$. Since
\[\nabla^n(x)=(1,0)( \nabla^n A + n A \nabla^n A + \cdots + A^n ) \binom{x}{y},\]  $\ord(\nabla^n x) = \ord(A^n)$. 
Consider $\nabla^n (x^2)=2\sum^n_{k=0} \nabla^k(x) \nabla^{n-k}(x) $.  The order of the product $\nabla^k(x)\nabla^{n-k}(x)$ is given by $(1,0) A^k \binom{x}{y}. (1,0)A^{n-k}\binom{x}{y}$. 

Let $A^k=p^{\ord(A^k)} \begin{pmatrix}a_k & b_k \\ c_k & d_k \end{pmatrix}$. Then 
\[\nabla^n(x^2)= [ (\sum a_k a_{n-k} )x^2 + 2(\sum a_k b_{n-k}) xy+ (\sum b_k b_{n-k}) y^2 ]p^{\ord(A^n)} + \text{lower order terms } .\]

Note $\nabla^n(x^2)\in (S^2 N)' \subset p^{k_0} S^2 N$. For $n$ large, the coefficient of $p^{\ord(A^n)}$ has to be zero. Since $x^2, xy, y^2$ are basis of $S^2N$, we have 
\[\sum a_k a_{n-k} =\sum a_kb_{n-k} = \sum b_k b_{n-k}=0.\]
Repeat the analysis for $\nabla^n(xy)$ and $\nabla^n(y^2)$.  We have for $n$ large, the following terms 
\[\sum a_k c_{n-k}, \sum b_k d_{n-k}, \sum c_kc_{n-k}, \sum d_kd_{n-k}, \sum c_kd_{n-k}, \sum(a_kd_{n-k}+b_kc_{n-k})\] are all zero. 
Note $A^n=p^{\ord(A^n)} \begin{pmatrix} \sum a_ka_{n-k}+ b_kc_{n-k} & \sum a_kb_{n-k}+b_kd_{n-k} \\ \sum c_ka_{n-k} + d_kc_{n-k} & \sum c_kb_{n-k}+d_kd_{n-k} \end{pmatrix}$ for any $k\leq n$. Thus 
\begin{multline}
nA^n=p^{\ord(A^n)} \begin{pmatrix} \sum_k a_ka_{n-k}+ b_kc_{n-k} &\sum_k a_kb_{n-k}+b_kd_{n-k} \\ \sum_k c_ka_{n-k} + d_kc_{n-k} & \sum_k c_kb_{n-k}+d_kd_{n-k} \end{pmatrix} \\ = p^{\ord(A^n)} \begin{pmatrix} \sum_k  b_kc_{n-k} &0 \\ 0 & \sum_k c_kb_{n-k} \end{pmatrix}.\end{multline}
In particular, for $n$ large, $A^n$ is always diagonal which implies $A$ has to be diagonal, i.e. $b_k =c_k=0$. Then $A^n$ is further the zero matrix, contradiction to the assumption $\{\ord(A^n)\}$ not bounded below.
%then there exists $K\in \mathbb{N}$ and $x \in N$ such that $\nabla^{(K)}_\cV(n) \notin \frac{1}{p^k} N$ while for any $k<K$, $\nabla^{(k)}_\cV(n) \in \frac{1}{p^k} N$ and $\frac{1}{p}m \notin M$. Then 
%\[ (S^2M)' \ni \nabla^{(K)}_\cW(n^2)=\sum_{k=0} \nabla^{(k)}_\cV (n) \nabla^{(K-k)}(n) \notin \frac{1}{p^k} S^2N ,\]contradiction. Therefore $N'$ is finitely generated. 

Hence $N'$ is finitely generated.

Since $\tilde C$ is a regular dimension 2 scheme, similar to the proof of \ref{zero quotient}, the double dual $N'^{\vee\vee}$ is locally free which also admits a connection.

Since all the arguments above are canonical, the existence holds globally.
\end{proof}

Therefore there is no obstruction to the connection. Now since the \'etale cohomology group $H^2_{\mathrm{et}}(\tilde C, \mu_2)$ is killed by any 2:1 \'etale covering of $C$. So over any 2:1 \'etale covering $C'$ of $C$, there exists a rank 2 crystal $\cV$ such that $S^2 \cV \cong \cW_{C'}$. Next we consider the Frobenius. 

\begin{prop} \label{F^f-isocrystal}
There exists an \'etale covering $C'\ra C$ such that over $C'$, there is a rank 2 $F^f$-isocrystal $\cV$ with $S^2\cV=\cW_{C'}$.
\end{prop}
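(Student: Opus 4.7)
The plan is to build on Propositions \ref{bundle} and \ref{connection}. After pulling back along a $2{:}1$ \'etale covering $C_1 \to C$ that kills the Brauer obstruction $o_1$, we obtain on $\tilde C_1$ a rank $2$ bundle with integrable connection $(\cV,\nabla_\cV)$ together with a horizontal isomorphism $\alpha\colon S^2\cV \xrightarrow{\sim} \cW_{C_1}$. What remains is to promote $\cV$ to an $F^f$-isocrystal, that is, to construct a horizontal isomorphism $F_\cV\colon \cV^{\sigma^f}\to\cV$ of isocrystals whose symmetric square matches the Frobenius $F_\cW$ on $\cW$, possibly after a further \'etale cover.

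Transport $F_\cW$ across $\alpha$ to obtain a horizontal isomorphism
\[
G \;=\; \alpha^{-1} \circ F_\cW \circ S^2(\alpha^{\sigma^f})\colon S^2(\cV^{\sigma^f}) \longrightarrow S^2\cV
\]
of isocrystals on $\tilde C_1$. The desired $F_\cV$ is characterized by $S^2 F_\cV = G$. Since $\alpha$ identifies the symmetric form on $\cW$ with the one on $S^2\cV$ induced by an alternating form on $\cV$ (up to a scalar), the map $G$ is $SO(3)$-valued up to similitude. Via the classical $2{:}1$ covering $SL(2)\to SO(3)$ realized by $S^2$, we can produce the lift locally: on any affine open $U_i\subset\tilde C_1$ trivializing $\cV$, a direct linear-algebra argument yields an isomorphism $F_i\colon \cV^{\sigma^f}|_{U_i}\to\cV|_{U_i}$ with $S^2 F_i = G|_{U_i}$, and any two such $F_i$ differ by a sign (here we use $p>2$).

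On overlaps the discrepancies $\epsilon_{ij} := F_i\circ F_j^{-1} \in\{\pm 1\}$ form a \v{C}ech $1$-cocycle, giving a class $[\epsilon]\in H^1_{\et}(C_1,\mu_2)$ which is the sole obstruction to gluing the $F_i$. The $\mu_2$-torsor defined by $[\epsilon]$ is a finite \'etale cover $C'\to C_1$; pulling back to $\tilde C'$ trivializes $[\epsilon]$ and the $F_i$ glue to a global $F_\cV\colon \cV^{\sigma^f}\to\cV$ on $\tilde C'$ with $S^2 F_\cV = G$. Horizontality of $F_\cV$ follows from that of $G$ together with the fact that the map
\[
S^2\colon \Hom(\cV^{\sigma^f},\cV)\longrightarrow \Hom(S^2\cV^{\sigma^f}, S^2\cV)
\]
is injective for rank $2$ bundles when $p>2$, so $\nabla G=0$ forces $\nabla F_\cV=0$. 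The composite $C'\to C$ is the desired finite \'etale covering, and $(\cV,\nabla_\cV,F_\cV)$ is the sought rank $2$ $F^f$-isocrystal with $S^2\cV\cong\cW_{C'}$.

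The main subtle point is the identification of the $\mu_2$-cocycle $[\epsilon]$ and the verification that it is the only obstruction: one must check that the local lifts $F_i$ can automatically be chosen horizontal, and that the $\pm 1$ ambiguity is the entire indeterminacy. Both come from the fact that $SL(2)\to SO(3)$ via $S^2$ is a central isogeny with kernel $\mu_2$, together with the injectivity of $S^2$ on $\Hom$ in rank $2$. Everything else is routine \'etale-cohomological descent.
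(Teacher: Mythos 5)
Your overall strategy is the same as the paper's: reduce to lifting the transported Frobenius through the central $\mu_2$-isogeny $S^2\colon GL(2)\to \mathrm{image}$, observe that local lifts differ by $\pm 1$, and kill the resulting class in $H^1_{\mathrm{et}}(\cdot,\mu_2)$ by a double cover. The gluing step and the horizontality argument (which should really be phrased via the polarization $H\mapsto F\cdot H$ of the quadratic map $S^2$, since $S^2$ on $\Hom$ is not linear --- but invertibility of $F$ in characteristic $0$ makes that derivative injective, so the conclusion stands) are fine.

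The genuine gap is at the step ``a direct linear-algebra argument yields $F_i$ with $S^2F_i=G|_{U_i}$.'' The map $G$ is only an orthogonal \emph{similitude}: $F_\cW$ multiplies the symmetric form by a scalar, so in local coordinates one only gets $G=\o\, S^2F_i$ for some horizontal $\o\in A[\frac1p]^{*}$ (this is exactly the $\o$ in the paper's computation $F_M=\o\,S^2F_N$). Absorbing $\o$ into $F_i$ requires a square root of $\o=p^{v}u$. Two problems follow. First, even for the unit part $u$, the lift need \emph{not} exist over an arbitrary trivializing affine open: the fiber of $S^2$ over $G|_{U_i}$ is a $\mu_2$-torsor over $U_i$, and $H^1_{\mathrm{et}}(U_i,\mu_2)\supset A^{*}/(A^{*})^{2}\neq 0$ in general, so your cocycle $\epsilon_{ij}$ is not yet defined; this part is repairable by working with the total $\mu_2$-torsor of lifts rather than a \v{C}ech cocycle for a fixed cover, and is still killed by a double cover of $C$. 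Second, and not repairable by any \'etale cover of $C$: if $v=\mathrm{ord}_p(\o)$ is odd, $\o$ is not a square in any extension obtained from an \'etale covering of the curve, since such coverings do not change the $p$-adic valuation. The paper handles this by replacing $F^f$ with $F^{2f}$ (i.e.\ enlarging $f$), which replaces $\o$ by $\tilde\s(\o)\o$ of even valuation. Your argument has no mechanism for this and would break exactly here; you need to add the valuation analysis of the similitude factor and the passage to $F^{2f}$ before the $\mu_2$-cocycle discussion applies.
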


\begin{proof}
By \ref{bundle}, \ref{connection}, we can choose any 2:1 covering killing the obstruction to the existence of a crystal. So we assume there exists crystal $\cV$ such that $S^2 \cV = \cW$ over $\tilde C$. 

Notations as in the proof of \ref{connection}, we choose an open affine subset $U =\Spec A\subset \tilde C$ and $N=\cV(U), M=\cW(U)$. So $S^2 N =M$. Let $\tilde \s$ be the lifting of $\s$ to $U$.  Note $\cW$ is an $F^f$-isocrystal. Without loss of generality, we can assume $f=1$ and then $F_M: M^{\tilde \s} \ra M$ and it is compatible with the symmetric forms $(,)$ and $(,)^{\tilde \s}$.

Shrinking $U$ if necessary, assume further $N$ is free and generated by $(x,y)$. Then correspondingly, $M$ is generated by $(x^2, xy, y^2)$. Since $(x^2, x^2)=0$, $(F_M(x^2), F_M(x^2))=0$. Thereby $F_M(x^2)$ is contained in the conic bundle. Because the obstruction $o_1$ is trivial, $F_M(x^2)$ comes from a square of some element in $N$. Thus $F_M(x^2)$ is also a square up to a scalar: $F_M(x^2)=\l x'^2$. Similarly, $F_M(y^2)=\mu y'^2$. Here $x'.y'$ are elements in $N$ and $\l, \mu \in A[\frac{1}{p}]^*$.  Since $(xy, x^2)=(xy, y^2)=0$, $F_M(xy)=\nu x'y'$. 

Note $x^2 + sxy+y^2$ is also a square in $M$ and thus 
\[F_M(x^2 + sxy+y^2)=\l x'^2 + 2\nu x'y' + \mu y'^2\]
is also a square up to scalar.  We can rewrite it as 
\begin{equation*}
\begin{aligned}
F_M(x^2)&=& \o x''^2\\
F_M(xy)&=& \o x''y''\\
F_M(y^2)&=& \o y''^2
\end{aligned}
\end{equation*} for some $\o\in A[\frac{1}{p}]^*$. Therefore we choose $F_N$ such that $F_N(x)=x''$ and $F_N(y)=y''$ and 
\[F_M=\o S^2F_N.\]

We also need the $F_N$ to be horizontal. Note 
\[\nabla_\cW(F_M(x^2))=\nabla_\cW(\o x^2)=d\o \otimes x'^2 + \o (2x') \nabla_\cV x' \] and 
\[\nabla_\cW(F_M(x^2))=F_M(\nabla_\cV(x^2))=F_M(2x\nabla_\cV x)=2\o F_N(x)F_N(\nabla_\cV x).\] Similarly, we can compute for $F_M(y^2)$. So $F_N$ commutes with $\nabla$ if $d\o=0$. 

Write $\o=p^v.u$ where $u \in A^*$. If $v$ is not even, then consider $F^2_M$ instead of $F_M$ in which case $\o$ is replaced by $\tilde \s(\o)\o=p^{2v}u'$. So we can assume $v$ is even.

For the unit $u$, there exists 2:1 \'etale covering $U' \xrightarrow{f} U$ (note $p>2$), such that $f^*(u)$ is a square in $A'^*$. Then $f^*(\o)=p^v f^*(u)$ is a square $\o'^2$.  Adjust $F_N$ such that $F_N(x)=\o' x'', F_N(y)=\o'y''$. Then 
\[F_M=S^2 F_N\]
and $F_N$ is compatible with the connection $\nabla_\cV$ and alternating form $<,>$.  

Now we consider the global case. For any affine covering $\tilde C=\cup_i U_i$, there exist 2:1 covering $U'_i \ra U_i$ such that we can find $F_{N,i}$ with $F_M=S^2F_{N,i}$.  Over $U'_i \times_C U'_j$, $S^2F_{N,i}=S^2F_{N,j}$ and thus $F_{N,i}=\tau F_{N,j}$ with $\t^2=1$, i.e.
\[\tau\in\m_2(A[\frac{1}{p}])=\m_2(A)=\m_2(A/p).\]

Therefore the obstruction of the existence of $F_\cV$ is in $H^1_{\mathrm{et}}(C,\m_2)$. This \'etale cohomology group can be killed by some 2:1 \'etale covering of $C$.
\end{proof}

Each $W_i$ in (\ref{the first decomposition}) corresponds to a rank 3 isocrystal $\cW_i$ over $C$ with a symmetric form. Hence we can choose a finite \'etale covering $C' \ra C$. The base change of $\cW_i$ to $C'$ are the second power symmetric product of some rank 2 isocrystals $S^2 \cV_i $. 

\begin{rmk} \label{notations on C'}
In the rest of the paper, we consider all the datum base change to $C'$ and use $\cE'$, $E'$, $G'_E$ to denote the pullback of $\cE, E, G_E$ to $C'$.
\end{rmk}

\section{The structure of $\cE'$ as an $F$-isocrystal} \label{the structure}

In this section, we will show that as  $F^f$-isocrystal over the \'etale covering $C'$ of $C$, $\cE$ has a tensor decomposition $\cV_1 \otimes \cV_2 \otimes \cV_3$. 

In the Tannakian formalism, the representation $V_i$ corresponding to $ \cV_i$ is dimension 2 with an alternating form and $S^2V_i=W_i$. Therefore $G'_\univ$ acting on $V_i$ factors through $SL(2)$ and then the Tannakian group of $\cV_1\otimes \cV_2 \otimes \cV_3$ is $\im(G'_\univ \ra SL(2)^{\times 3})$.  Now (\ref{the first decomposition}) transforms to 
\begin{equation} \label{the second decomposition}
\wedge^2 E' \cong \wedge^2 V_1 \otimes \wedge^2 V_2 \otimes \wedge ^2 V_3 \oplus S^2V_1 \otimes S^2 V_2 \otimes \wedge^2 V_3 \oplus S^2V_1 \otimes \wedge^2V_2 \otimes S^2 V_3 \oplus \wedge^2 V_1 \otimes S^2 V_2 \otimes S^2V_3.
\end{equation}
Thus we have the commutative diagram:

\[\xymatrix{
&G'_E \ar[dr] & \\
G'_\univ \ar[ur] \ar[dr] & & PGL(2)^{\times 3}\\
& SL(2)^{\times 3} \ar[ur]\\
}\]

\begin{lemma}
The map $G'_E \ra PGL(2)^{\times 3} $ is surjective. 
\end{lemma}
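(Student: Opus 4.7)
The plan is to reduce the claim to a Lie-algebra computation, which after base change to $\bar{\Q}_{p^f}$ becomes immediate from the structure theorem for $\mathrm{Lie}(G_E)$ established in the previous section. Since $PGL(2)^{\times 3}$ is connected and surjectivity of an algebraic-group homomorphism is unaffected by passage to the algebraic closure, it suffices to show that the induced differential
\[ d\phi \colon \mathrm{Lie}(G'_E)_{\bar{\Q}_{p^f}} \longrightarrow \mathfrak{pgl}(2)^{\times 3} \]
is surjective.

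First I would assemble the Lie-algebraic input. By \ref{same dimension}, the inclusion $G'_E \hookrightarrow G_E$ is an equality on identity components, so $\mathrm{Lie}(G'_E) = \mathrm{Lie}(G_E)$. The analysis of the preceding section then gives a reductive decomposition $\mathrm{Lie}(G'_E) = \mathfrak{z} \oplus \mathfrak{sl}(2)^{\times 3}$, where the three $\mathfrak{sl}(2)$-summands act on the three rank-$2$ pieces $V_1, V_2, V_3$ constructed in Section \ref{the choice}: the $i$-th summand acts standardly on $V_i$ and trivially on $V_j$ for $j \neq i$. Since $\mathfrak{pgl}(2)^{\times 3}$ has trivial center, $\mathfrak{z} \subseteq \ker d\phi$, so $\mathrm{im}(d\phi) = d\phi(\mathfrak{sl}(2)^{\times 3})$.

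Next I would pin down $d\phi$ factor by factor. The map $\phi$ is the action on $W_1 \oplus W_2 \oplus W_3$ with $W_i = S^2 V_i$, so the $i$-th $\mathfrak{sl}(2)$-summand of $\mathrm{Lie}(G'_E)^{ss}$ maps nontrivially into $\mathfrak{pgl}(W_i) \cong \mathfrak{sl}(2)$ and trivially into the other two factors. Simplicity of $\mathfrak{sl}(2)$ forces each nontrivial component map $\mathfrak{sl}(2) \to \mathfrak{pgl}(2)$ to be injective, hence an isomorphism by dimension; assembling the three components shows that $d\phi|_{\mathfrak{sl}(2)^{\times 3}}$ is an isomorphism onto $\mathfrak{pgl}(2)^{\times 3}$. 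The surjectivity of $G'_E \to PGL(2)^{\times 3}$ then follows from connectedness of the target.

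The only point that requires any care is the labelling: the three $\mathfrak{sl}(2)$-summands detected in the classification of $\mathrm{Lie}(G_E)^{ss}$ must be identified with the three factors of the target indexed by the $W_i$. This is automatic from the way $V_i$ was extracted so that $S^2 V_i = W_i$: over $\bar{\Q}_{p^f}$, $V_i$ coincides with the standard representation of the $i$-th $\mathfrak{sl}(2)$-summand up to a $\mu_2$-twist, and such a twist does not affect the Lie-algebra action.
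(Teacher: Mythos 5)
Your proof is correct and rests on the same two inputs as the paper's: the equality $\dim G'_E=\dim G_E$ from Remark \ref{same dimension} and the structure result $(\frak{g}_E)^{ss}_{\bar{\Q}_{p^f}}\cong\frak{sl}(2)^{\times 3}$ acting factorwise on $E=E_1\otimes E_2\otimes E_3$. The only organizational difference is that the paper gets the $9$-dimensional image by showing the kernel is the finite group $\pm I$ (via faithfulness of $E'$ and the decomposition of $\wedge^2E'$), whereas you compute the image of the differential directly; both then conclude by connectedness of the $9$-dimensional target.
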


\begin{proof}
From (\ref{the first decomposition}), we know that $\ker(G'_E \ra PGL(2)^{\times 3})=\ker(G'_E \ra \Aut(\wedge^2 E'))$. Since $E'$ is a faithful representation of $G'_E$, the kernel is just $\pm I$. 

By \ref{same dimension}, since $\dim \frak{g}^{ss}_\C=9$, $\dim G'^{der}_E =\dim G^{der}_E = 9$. Hence $\im(G'_E \ra PGL(2)^{\times 3})$ has dimension 9. Since $PGL(2)$ is simple, it must be surjective. 
%Since $W_i$ are all $G'_E$ representation, $W_1\otimes W_2\otimes W_3$ is in $\Rep(G'_E)$ which induces a natural morphism between groups:
%%\[G'_E \ra PGL(2)^{\times 3}.\] 
%Note the composition of this morphism with projection to any two factors yields a %surjection $G'_E \ra PGL(2)^{\times 2}$. Thus for any $(g_1, g_2,g_3)\in PGL(2)^{\times 3}$,  
\end{proof}

Since $SL(2)$ is simply connected as an algebraic group and both of $SL(2)^{\times 3}$ and $G'^{der}_E$ are finite covering of $PGL(2)^{\times 3}$, there is a natural surjection $SL(2)^{\times 3} \ra G'^{der}_{E}$ with kernel isomorphic to $\m_2 \times \m_2$. Then 
\[\im(SL(2)^{\times 3} \ra \Aut(V_1 \otimes V_2 \otimes V_3))\cong G'_E.\] 

Since the action of $G'^{der}$ on $E'$ is absolutely irreducible, the center $Z(G'_E)$ acts as scalars on $E'$. So the action of $G'_E$ on $E'$ is the same as $SL(2)^{\times 3}$ up to some power of determinant. Let $\c$ be the character.

 Now we have two morphisms from $G'_\univ$ to $G'_E$, one induced by $\cV_1\otimes \cV_2 \otimes \cV_3$ and the other induced by $\cE'$. Denote them by $f_1, f_2$ respectively: 
\[\xymatrix{
G'_\univ \ar@/^/[r]^{f_1} \ar@/_/[r]_{f_2} & G'_E.
}\]
For any $g\in G'_\univ$, $f_1(g)=\c(g) f_2(g)$, i.e. the image of 
\[G'_\univ \xrightarrow{f_1\times f_2} G'_E \times G'_E\] is contained in $\D\cup \D'$ where $\D'=\{(g,\c(g)g)| g\in G'_E\}.$ The image is isomorphic to a subgroup of $G'_E \times \GG_m$. 

\begin{prop}
There exists an $F^f$- isocrystal $\cL$ such that 
\[\cV_1\otimes \cV_2\otimes \cV_3 \cong \cE' \otimes \cL\] as $F^f$-isocrystals. 
\end{prop}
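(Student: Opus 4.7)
The plan is to convert the character $\chi$ produced in the preamble into a rank-1 $F^f$-isocrystal via Tannakian duality, and then read the identity $f_1 = \chi \cdot f_2$ back as a tensor identity of isocrystals.

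First I would verify that $\chi$ is a genuine character $G'_\univ \to \GG_m$. This is essentially what the paper has just set up: the image of $f_1\times f_2\colon G'_\univ \to G'_E\times G'_E$ lies in the subgroup $\Delta \cup \Delta' \subset G'_E\times G'_E$, which is isomorphic to $G'_E\times \GG_m$ via $(g,c)\mapsto (g,cg)$. Composing with the projection onto $\GG_m$ gives a group homomorphism $G'_\univ \to \GG_m$, and this homomorphism is precisely $\chi$. By Theorem \ref{Deli} and Remark \ref{the equivalence}, the category $F^f$-isoc$(C')$ is tensor-equivalent to $\Rep_{\Q_{p^f}}(G'_\univ)$, so the one-dimensional representation $\chi$ corresponds to a rank-1 $F^f$-isocrystal on $C'$; call it $\cL$.

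Next I would translate the relation $f_1(g)=\chi(g)f_2(g)$ back across this equivalence. Identifying the underlying $8$-dimensional $\Q_{p^f}$-vector spaces of $V_1\otimes V_2\otimes V_3$ and $E'$, the relation says exactly that the two $G'_\univ$-actions on this common space differ by the scalar twist $\chi$; in other words, $V_1\otimes V_2\otimes V_3 \cong E'\otimes \chi$ in $\Rep_{\Q_{p^f}}(G'_\univ)$. Since the Tannakian equivalence is a tensor equivalence, this yields $\cV_1\otimes \cV_2\otimes \cV_3 \cong \cE'\otimes \cL$ in $F^f$-isoc$(C')$, as claimed.

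The substantive work is already contained in the preceding paragraphs, which show that the discrepancy between the $G'_E$-actions on $E'$ and on $V_1\otimes V_2\otimes V_3$ is captured by a single scalar character: the kernel of $G'_E\to PGL(2)^{\times 3}$ is $\{\pm I\}$, the map $SL(2)^{\times 3}\to G'^{der}_E$ is a finite central cover, and $Z(G'_E)$ acts by scalars on $E'$ by absolute irreducibility. Once this is in place the proposition is a purely formal consequence of Tannakian duality, and so I do not expect any further obstruction beyond correctly identifying the character $\chi$.
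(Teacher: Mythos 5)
Your argument is correct and is essentially the paper's own: the paper likewise packages the relation $f_1(g)=\chi(g)f_2(g)$ into the embedding of the Tannakian group of $\{\cV_1\otimes\cV_2\otimes\cV_3,\cE'\}$ into $G'_E\times\GG_m$, reads off the character as a rank-1 $F^f$-isocrystal $\cL$, and obtains the isomorphism $\cV_1\otimes\cV_2\otimes\cV_3\cong\cE'\otimes\cL$ by comparing the projection and multiplication maps. Your phrasing via the explicit character $\chi$ and a scalar twist is just a more direct rendering of the same Tannakian step.
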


\begin{proof}
Consider the sub Tannakian category generated by $\{\cV_1\otimes \cV_2 \otimes \cV_3, \cE'\}$. The group corresponds to this sub category is given by 
\[\im(G'_\univ \ra \Aut(V_1\otimes V_2 \otimes V_3)\times G'_E) \hookrightarrow G'_E \times \GG_m.\] 
Hence we have 
\[\xymatrix{
 &&G'_E\\
G'_\univ \ar[r] & G'_E \times \GG_m \ar[ur]^{\text{pr}} \ar[dr]_{m} \\
&& G'_E
}\] where $\text{pr}$ is the first factor projection while $m$ is the multiplication.  Through $m$, $\cV_1\otimes \cV_2 \otimes \cV_3\cong \cE'\otimes \cL$ where $\cL$ is a rank 1 $F^f$-isocrystal. 
\end{proof}

Replace $\cV_3$ by $\cV_3 \otimes \cL$ and we still denote it as $\cV_3$. Summarize the results and we have the following theorem.

\begin{thm} \label{tensor decomposition}

There exists a finite \'etale covering $C'$ of $C$ such that after base change to $\cri(C'/W(k))$, we have an isomorphism of $F^f$-isocrystals: 
\[ \cV_1 \otimes \cV_2 \otimes \cV_3 \cong \cE'  \] where $\text{rank } \cV_i=2$.
\end{thm}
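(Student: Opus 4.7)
The plan is to build three rank $2$ $F^f$-isocrystals $\cV_i$ from the rank $3$ summands $\cW_i$ of $\wedge^2\cE'$ established earlier, then compare the Tannakian representations of $\cV_1\otimes\cV_2\otimes\cV_3$ and $\cE'$, and finally absorb a scalar discrepancy into one of the $\cV_i$.

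First, I would invoke Proposition \ref{F^f-isocrystal} on each of the three $\cW_i$ appearing in the decomposition (\ref{the first decomposition}). Passing to a single further finite \'etale cover of $C$ (still denoted $C'$) that simultaneously kills the obstructions for all three yields rank $2$ $F^f$-isocrystals $\cV_1,\cV_2,\cV_3$ with $S^2\cV_i\cong\cW_i$. Each $\cV_i$ carries an alternating form, so its Tannakian group acts through $SL(2,\Q_{p^f})$, and the tensor product $\cV_1\otimes\cV_2\otimes\cV_3$ is classified by a homomorphism $f_1\colon G'_\univ\to SL(2)^{\times 3}\to GL(V_1\otimes V_2\otimes V_3)$.

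Next I would compare $f_1$ with the homomorphism $f_2\colon G'_\univ\to G'_E$ arising from $\cE'$. The preceding lemma supplies a surjection $G'_E\twoheadrightarrow PGL(2)^{\times 3}$, and both $f_1$ and $f_2$ induce the same composite map into $PGL(2)^{\times 3}$ by construction, since $S^2\cV_i\cong\cW_i$ are identified as the same rank $3$ summands of $\wedge^2\cE'$ on either side. Hence, under the identification $V_1\otimes V_2\otimes V_3\cong E'$ of underlying representations of the common cover $SL(2)^{\times 3}$ of $G'^{\mathrm{der}}_E$, the maps $f_1$ and $f_2$ differ only by a central scalar, i.e.\ by a character $\chi\colon G'_\univ\to\GG_m$ with $f_1=\chi\cdot f_2$. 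This is most cleanly packaged, as in the preceding proposition, by noting that the image of $f_1\times f_2\colon G'_\univ\to G'_E\times G'_E$ is contained in $\Delta\cup\Delta'$ and so factors through $G'_E\times\GG_m$. Tannakian duality then converts $\chi$ into a rank $1$ $F^f$-isocrystal $\cL$, giving $\cV_1\otimes\cV_2\otimes\cV_3\cong\cE'\otimes\cL$.

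Finally I would absorb the line isocrystal $\cL$ into one of the three factors (replacing $\cV_3$ by $\cV_3\otimes\cL^{-1}$, still rank $2$ and still symplectic up to scalar) to obtain the stated decomposition $\cV_1\otimes\cV_2\otimes\cV_3\cong\cE'$. The main obstacle I anticipate lies in the comparison step: one must rule out that $f_1$ and $f_2$ might differ by a nontrivial outer automorphism of $SL(2)^{\times 3}$, for instance one permuting the three factors. This is handled by the canonical indexing of the summands $\cW_i$ of $\wedge^2\cE'$ fixed throughout Section \ref{the choice}, which keeps the three $\cV_i$ ordered consistently on both sides of the comparison, leaving only the central scalar ambiguity captured by $\cL$.
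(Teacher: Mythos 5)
Your proposal is correct and follows essentially the same route as the paper: apply Proposition \ref{F^f-isocrystal} to the three $\cW_i$ over a common finite \'etale cover, compare the two Tannakian homomorphisms $f_1,f_2\colon G'_\univ\to G'_E$ via the surjection onto $PGL(2)^{\times 3}$ and the scalar action of the center on the absolutely irreducible $E'$, extract the rank $1$ $F^f$-isocrystal $\cL$ from the factorization through $G'_E\times\GG_m$, and absorb it into $\cV_3$. Your explicit remark about ruling out a permutation of the three factors is a point the paper leaves implicit, but it does not change the argument.
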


\section{The end of the proof of \ref{main thm}} \label{the end}

Because of maximal Higgs field, $X' \ra C'$ is not isotrivial. Since $V_i$ are $SL(2)$-representations in $\Rep(G'_\univ)$, in particular, $\cV_i$ are all irreducible $F^f$-isocrystals,  we can apply the following general theorem to our $X' \ra C'$. 

\begin{thm} (\cite{Xia3}) \label{the 2nd step}
Let $X \ra C$ be a principally polarized abelian varieties of dimension $2^m$ over a smooth proper curve $C$ over an algebraically closed field $k$ of characteristic $p$.  Assume $p>2$ and 
\begin{enumerate}
\item $X_c$ is ordinary for some closed point $c\in C$,
\item $\cE \cong \cV_1 \otimes \cV_2 \otimes \cdots \cV_{m+1}$ as isocrystals where all $\cV_i$ are irreducible of rank 2,
\end{enumerate} then $X\ra C$ is a weak Mumford curve.

If we further assume the Higgs field associated to $X \ra C$ is maximal,
then there exists another family of fourfolds  $Y' \ra C'$ such that
\begin{enumerate}[(a)]
\item $Y' \ra X'$ is an isogeny over $C'$, 
\item $Y' \ra C'$ is a good reduction of a Mumford curve . 
\end{enumerate}

\end{thm}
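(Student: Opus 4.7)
The strategy is to combine the canonical lift at the ordinary point $c$ with the global tensor decomposition to produce an abelian scheme over a characteristic-zero lift of $C$ whose generic fibre is a Mumford curve. The first conclusion, that $X\to C$ is a weak Mumford curve, I would prove by a direct unwinding of the definition from \cite{Xia3}: the tensor decomposition $\cE\cong\cV_1\otimes\cdots\otimes\cV_{m+1}$ with each $\cV_i$ irreducible of rank $2$ pins down the Tannakian monodromy of $\cE$ to (the image of) $SL(2)^{\times(m+1)}$ acting by the standard tensor-product representation, and ordinariness at $c$ fixes the slope filtration in the shape required. This part should be essentially formal once the definition is spelt out.

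For the harder conclusion, I would first lift $C$ to a smooth projective curve $\tilde C$ over $W(k)$, which is always possible since the obstruction lives in $H^2$ of a curve. The $F^f$-isocrystal $\cE$ evaluates on $\tilde C$ as a vector bundle with integrable connection, carrying the tensor decomposition. At $c$, the canonical Serre--Tate lift $\tilde X_c$ has a $p$-divisible group whose Dieudonn\'e module decomposes compatibly with the $\cV_{i,c}$. By Grothendieck--Messing, lifting the Hodge filtration of $\cE$ to $\tilde C$ compatibly with each tensor factor produces a formal deformation of $\tilde X_c[p^\infty]$ over the formal neighbourhood of $c$; then by Serre--Tate in families together with algebraization (using projectivity of $\tilde C$), one extends this formal family to a genuine abelian scheme $\tilde Y\to\tilde C$ lifting an isogeny modification $Y\to C$ of $X\to C$. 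The maximal Higgs field hypothesis transfers under crystalline deformation theory to maximality of the Kodaira--Spencer map on $\tilde Y\to\tilde C$, so the generic fibre over $\mathrm{Frac}\,W(k)$ is a non-isotrivial family of polarised abelian varieties whose $H^1$ tensor-decomposes into $m+1$ irreducible rank-$2$ local systems, forcing the generic Mumford--Tate group to be the image of $SL(2)^{\times(m+1)}$ on $V_1\otimes\cdots\otimes V_{m+1}$. This is precisely Mumford's Shimura datum, so $\tilde Y_{\mathrm{Frac}\,W(k)}\to\tilde C_{\mathrm{Frac}\,W(k)}$ is a Mumford curve and $\tilde Y\to\tilde C$ is its integral model, whose reduction is identified with the desired $Y'\to C'$.

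The main obstacle is integrality. The tensor decomposition holds only at the level of isocrystals, so each $\cV_i$ is well defined only up to $p$-isogeny, and the same flavour of $\m_2$-cohomology obstructions that appeared in \ref{bundle}, \ref{connection}, and \ref{F^f-isocrystal} can resurface when attempting to realise the $\cV_i$ as honest Dieudonn\'e crystals of rank-$2$ BT group factors compatible with the polarization on $\cE$. Killing these obstructions is what forces the finite \'etale cover $C'\to C$, and making the tensor decomposition exact on integral crystals (rather than only as isocrystals) is what forces the further isogeny $Y'\to X'$. A subtler point is ruling out that the generic Mumford--Tate group of $\tilde Y$ degenerates to a proper subgroup of $SL(2)^{\times(m+1)}$ (such as a diagonal copy, or an $SL(2)^{\times m}$ factor); here the irreducibility of the $\cV_i$ as $F^f$-isocrystals and the non-isotriviality granted by the maximal Higgs field should combine to pin the group down exactly, but executing this carefully—and in particular verifying compatibility with the polarization on the lifted family—is where most of the real work lies.
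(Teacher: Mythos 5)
The first thing to note is that the paper does not prove this statement at all: Theorem \ref{the 2nd step} is quoted verbatim from \cite{Xia3} and used as a black box, so there is nothing in the present text to compare your argument against step by step. The only hint the paper gives about the internal mechanics is in Section \ref{the end}, where the author invokes Remark 5.5 of \cite{Xia3} to reorganize the tensor decomposition into $\cV \otimes \cT$ with $\cV$ a rank $2$ Dieudonn\'e crystal of maximal Higgs field and $\cT$ a unit-root crystal of rank $2^m$, obtains the isogeny $Y \to X'$ from \ref{equiv on curve}, and then defers to Section 6 of \cite{Xia3} for the actual lifting. Your outline is broadly compatible with that shape --- canonical lift at the ordinary point, deformation theory along the Hodge filtration, identification of the generic Mumford--Tate group --- so you have guessed a plausible skeleton.

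As a proof, however, your proposal has genuine gaps, several of which you flag yourself. First, the ``weak Mumford curve'' conclusion cannot be ``essentially formal'' when the definition is never stated in this paper; moreover the slope-theoretic input at $c$ really requires the reorganized decomposition $\cV\otimes\cT$ (one factor carrying the Higgs field, the rest unit-root), which is a nontrivial consequence of ordinariness plus irreducibility rather than an immediate unwinding. Second, the central step --- lifting the Hodge filtration compatibly with the tensor factors, then ``Serre--Tate in families together with algebraization'' --- is precisely the hard content of the theorem and is asserted rather than carried out: you need the polarization to lift in order to algebraize, and you need the Kodaira--Spencer map to be concentrated in a single rank-$2$ factor, which is where the maximal-Higgs hypothesis actually does its work. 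Third, you correctly identify that pinning the generic Mumford--Tate group to the image of $SL(2)^{\times(m+1)}$ and excluding degenerate subgroups is ``where most of the real work lies,'' but that work is not done. So what you have is a reasonable roadmap, not a proof, and since the present paper contains no proof of this theorem there is nothing here against which it could be certified; one would have to consult \cite{Xia3} directly.
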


In our case, though all $\cV_i$ are only irreducible as $F^f$-isocrystal,  according to Remark 5.5 in \cite{Xia3}, $\cE'$ is also isogenous to a tensor product $\cV \otimes \cT$ where $\cV$ is a rank 2 Dieudonne crystal with maximal Higgs field and $\cT$ is a rank 4 unit root crystal.  By \ref{equiv on curve}, there exists an isogeny $Y \ra X'$.  The rest arguments follow identically as those in Section 6 of \cite{Xia3}.

\section{A variation of \ref{main thm}} \label{a variation}
We have a variation of \ref{main thm}. Assumption and notation as $\ref{main thm}$ except that $k$ can be any algebraically closed field of characteristic $p$. Let $B(k)$ be the fractional field of the Witt ring $W(k)$.

\begin{thm} \label{the variation}
Assume $p>2$ and 
\begin{enumerate}
\item $X_c$ is ordinary for some closed point $c\in C$,
\item $\G((C/W(k))_\cri, \wedge^4 \cE) \otimes B(k)$ has dimension 1,
\item  $\G((C/W(k))_\cri, \scr{E}nd(\wedge^2 \cE)) \otimes B(k)\cong B(k)^{\times 4}$, 
\end{enumerate}
then $X\ra C$ is a weak Mumford curve. If the Higgs field of $\cE$ is maximal,
then there exists a family of abelian fourfolds $Y\ra C'$ such that 
\begin{enumerate} [(a)]
\item $C' \ra C$ is a finite \'etale covering,
\item $Y \ra X'$ is an isogeny over $C'$, between $Y$ and the pullback family of $X$,
\item $Y \ra C'$ is a good reduction of a Mumford curve.
\end{enumerate}\end{thm}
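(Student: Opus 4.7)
The plan is to reduce Theorem \ref{the variation} to Theorem \ref{main thm} by promoting the $B(k)$-hypotheses (2), (3) to the $F^f$-invariant hypotheses appearing in the main theorem, for a suitably chosen multiple $f$ of $f_0$.

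First, since $\pi\colon X \to C$ is of finite type, it has a model over some finite subfield $\F_{p^{f_0}} \subset k$. Crystalline cohomology is compatible with the flat base extension $W(\bar{\F}_p) \hookrightarrow W(k)$, so the $B(k)$-dimension of $\G((C/W(k))_\cri, \wedge^4 \cE)$ agrees with the $B(\bar{\F}_p)$-dimension of the corresponding space over $C_{\bar{\F}_p}$, and likewise for $\scr{E}nd(\wedge^2 \cE)$. The ordinary-fiber condition also descends because the ordinary locus on $C$ is open and nonempty. Hence I may assume $k = \bar{\F}_p$, placing us in the setting of Theorem \ref{main thm}.

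For condition (2) of the main theorem, the principal polarization provides a canonical section $\l \in \G((C/W(k))_\cri, \wedge^2 \cE)$ with $F(\l) = p\l$. Its self-wedge $\l \wedge \l \in \G((C/W(k))_\cri, \wedge^4 \cE)$ is non-zero (the second exterior power of a non-degenerate symplectic form on a rank-$8$ space) and satisfies $F(\l \wedge \l) = p^2 (\l \wedge \l)$. Since $\G((C/W(k))_\cri, \wedge^4 \cE) \otimes B(k)$ is one-dimensional by hypothesis and contains $\l \wedge \l$, Frobenius acts on this line as multiplication by $p^2$. A short $\s^f$-semi-linear computation then shows that for every multiple $f$ of $f_0$ the eigenspace $\G((C/W(k))_\cri, \wedge^4 \cE)^{F^f - p^{2f}} \otimes \Q_{p^f}$ is precisely $\Q_{p^f} \cdot (\l \wedge \l)$, one-dimensional over $\Q_{p^f}$, which is condition (2) of Theorem \ref{main thm}.

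For condition (3), consider the commutative semisimple algebra $A = \G((C/W(k))_\cri, \scr{E}nd(\wedge^2 \cE)) \otimes B(k) \cong B(k)^{\times 4}$. The Frobenius $F$ is a $\s$-semi-linear algebra automorphism of $A$ and therefore permutes the four primitive idempotents $e_1, \dots, e_4$. Taking $f$ to be a further multiple of $f_0$ divisible by the order of this permutation, $F^f$ fixes each $e_i$; by semi-linearity, $F^f$ restricts to $\s^f$ on each line $B(k) \cdot e_i$. Consequently $A^{F^f} = \Q_{p^f}^{\times 4}$ as a $\Q_{p^f}$-algebra, giving condition (3) of Theorem \ref{main thm}.

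With both conditions of Theorem \ref{main thm} verified for a common $f$, the main theorem directly implies that $X \to C$ is a weak Mumford curve; assuming further that the Higgs field of $\cE$ is maximal, conclusions (a), (b), (c) of Theorem \ref{main thm} deliver the isogenous family $Y \to C'$ verbatim. The main obstacle is the bookkeeping for condition (3): one must control the action of a single power $F^f$ on the four primitive idempotents so that $A^{F^f}$ has exactly the required $\Q_{p^f}^{\times 4}$ structure. Combining the finiteness of the permutation of idempotents with the $\s^f$-semi-linearity makes this ultimately routine, but it is the step that genuinely bridges the Frobenius-free hypothesis of \ref{the variation} and the Frobenius-invariant hypothesis of \ref{main thm}.
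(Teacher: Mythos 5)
Your reduction founders at the very first step. Theorem \ref{the variation} is stated for an \emph{arbitrary} algebraically closed field $k$ of characteristic $p$, and the whole point of dropping the Frobenius from hypotheses (2) and (3) is to cover that generality. A family $X \to C$ over such a $k$ (e.g.\ $k=\overline{\F_p(t)}$) is defined over a finitely generated subfield but in general over \emph{no} finite field, so there is no $\F_{p^{f_0}}$-model, no $f_0$, and no way to regard $\cE$ as an $F^f$-isocrystal in the sense required by Theorem \ref{main thm}: identifying $\cE^{\s^f}$ with a crystal on $C$ itself, and making $F^f$-isoc$(C)$ a neutral Tannakian category over $\Q_{p^f}$, requires $C$ to descend to $\F_{p^f}$ (cf.\ Remark \ref{the equivalence}). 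Your appeal to base-change compatibility of crystalline cohomology presupposes a model over $\bar \F_p$ that need not exist, so Theorem \ref{main thm} simply cannot be invoked. This is exactly why the paper proves \ref{the variation} by rerunning the entire Tannakian argument of Sections \ref{a glance}, \ref{the choice} and \ref{the structure} in the category of isocrystals over $B(k)$ with no Frobenius, observing that the key steps (notably \ref{irreducibility1} and the Lie-algebra classification of Appendix \ref{simple groups}) use only the invariant-theoretic conditions $\dim(\wedge^4 E)^{G_E}=1$ and $\End(\wedge^2 E)^{G_E}\cong B(k)^{\times 4}$, and then applying \ref{the 2nd step}, which is stated for isocrystals.

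That said, your Frobenius bookkeeping is essentially sound \emph{in the special case} where $X\to C$ does admit a finite-field model: $\l\wedge\l$ spans the line $\G((C/W(k))_\cri,\wedge^4\cE)\otimes B(k)$ and is an $F$-eigenvector of eigenvalue $p^2$, so the $F^f-p^{2f}$ eigenspace is $\Q_{p^f}\cdot(\l\wedge\l)$; and a $\s$-semilinear ring automorphism of $B(k)^{\times 4}$ permutes the primitive idempotents, so a suitable power $F^f$ fixes them and the invariants form $\Q_{p^f}^{\times 4}$. So your argument gives a clean alternative derivation of \ref{the variation} from \ref{main thm} under the additional hypothesis that the family is defined over a finite field, but it does not prove the theorem in the stated generality.
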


Note in Section \ref{example}, $\cE\cong \cV_1 \otimes \cV_2 \otimes \cV_3$ as isocrystals over $\cri(C/\Z_p)$.  Each $\cV_i$ corresponds to a $SL(2)$-representation. Thereby the computations in Section \ref{example} shows that specific good reductions of Mumford curves also serve as examples of \ref{the variation}. 
The proof adopts exactly the same method as the proof of \ref{main thm}, except that instead of $F^f$-isocrystals we consider the neutral Tannakian category of isocrystals over $C$. Then $\cE$ corresponds to a $B(k)$-representation $E$. Similarly, Conditions (2) and (3) in \ref{the variation} imply \begin{equation} 
\dim_{B(k)}(\wedge^4 E)^{G_E}=1, \End(\wedge^2 E)^{G_E} \cong \Q^{\times 4}_{p^f}. \end{equation}

 Note in the proof of \ref{irreducibility1}, we only use $(\ref{*})$. So the same result holds for $E$ and the Tannakian group corresponding to $\cE$ is still reductive. 
 
 The rest arguments through Section \ref{a glance}, \ref{the choice} and \ref{the structure} work with only change of the base field from $\Q_{p^f}$ to $B(k)$.  So we have the tensor decomposition as isocrystals
\[\cE' \cong \cV_1 \otimes \cV_2 \otimes \cV_3.\]
Then directly apply \ref{the 2nd step} and we have \ref{the variation}.
%viewing $F: \cE'^\s \ra \cE'$ as a morphism between $F^f-$isocrystals over $C'$, we can still mimic the argument preceding Proposition 4.3 in \cite{Xia3} to obtain the tensor decomposition of $F$, 
%% Add an appendix referring to the proof%%
%and further $\cE'$ is isogenous to a tensor product $\cV \otimes \cT$ where $\cV$ is a rank 2 Dieudonne crystal with maximal Higgs field and $\cT$ is a rank 4 unit root crystal.  By \ref{equiv on curve}, there exists an isogeny $Y \ra X'$.  The following arguments are identical in those in Section 6 of \cite{Xia3}.

\begin{appendix}
\section{} \label{simple groups}
%\section{Complex simple algebraic group with symplectic 8 dimensional representation}

We classify all complex simple Lie algebra $\frak{g}$ with an irreducible symplectic 8 dimensional representation. In other words, we look for an embedding of $\frak{g} \ra \frak{sp}(8)$ such that the standard representation of $\frak{sp}(8)$ is $\frak{g}$-irreducible. 

Since $\frak{g}$ is simple, $\dim \frak{g}\geq 2$. Note $\mathrm{rank}(\frak{g})\leq \mathrm{rank}(\frak{sp}(8))=4$. 

If $\mathrm{rank} (\frak{g})=4$, the adjusting by a conjugation, we can assume the embedding $\frak{g} \ra \frak{sp}(8)$ maps the Cartan subalgebra of $\frak{g}$ into Cartan subalgebra of $\frak{sp}(8)$, the direct sum of positive roots $\frak{g}^+$ to $\frak{sp}(8)^+$, $\frak{g}^-$ to $\frak{sp}(8)^-$. Hence each root space of $\frak{g}$ maps to a root space of $\frak{sp}(8)$ which induces a map between Dynkin diagrams.  Comparing the Dynkin diagrams of $A_4, B_4, D_4$ with $\frak{sp}(8)$ yields that none of them can be embedded into $\frak{sp}(8)$. 

Therefore the only possible Lie algebras are $A_1, A_2, A_3, B_2, B_3, C_3, C_4$(note $\frak{sl}(4)\cong \frak{so}(6), \frak{sp}(4)\cong \frak{so}(5)$).

In each of the following cases, let $V$ always denote the standard representation of corresponding Lie algebras.
\begin{enumerate}
 
\item $A_1=\frak{sl}(2)$.

The unique 8 dimension irreducible representation of $\frak{sl}(2)$ is the symmetric power $S^7V$. Since $\frak{sl}(2)\cong \frak{sp}(1)$, $S^7 V$ is symplectic.

Now consider $\wedge^4(S^7 V)^{\frak{sl}(2)}$. By (\cite[11.35]{Fulton}), $\wedge^4(S^7 V) \cong S^4(S^4 V)$. Counting the dimension of each weight spaces yields the decomposition 
\[S^4(S^4 V) \cong V_{16}\oplus V_{12}\oplus V_{10}\oplus V_8 \oplus V_6 \oplus V^{\oplus 2}_4 \oplus V^{\oplus 3}_0.\]Therefore $\dim_\C \wedge^4(S^7 V)^{\frak{sl}(2)}=3$ which is too big.

\item $A_2=\frak{sl}(3)$

By (\cite[15.17]{Fulton}), for any irreducible representation $\G_{a,b}$ with highest weight $aL_1-bL_3$, the dimension $\dim_\C \G_{a,b}=(a+b+2)(a+1)(b+1)/2$. Then $\G_{a,b}$ is 8-dimensional if and only if $a=b=1$. Note $\G_{1,1}$ is nothing but the adjoint representation of $\frak{sl}(3)$ which is the traceless subrepresentation of $\End(V)$. So it has a nondegenerate symmetric, not alternating form. Therefore $\frak{sl}(3)$ does not have a symplectic irreducible representation. 

\item $A_3=\frak{sl}(4)$

Still by (\cite[15.17]{Fulton}), $\dim \G_{a_1,a_2,a_3}=(a_1+1)(a_2+1)(a_3+1)(a_1+a_2+2)(a_2+a_3+2)(a_1+a_2+a_3+3)/12$ where each $a_i$ is a nonnegative integer. No such $a_i$ makes $\dim \G_{a_1,a_2,a_3}=8$. Hence $\frak{sl}(4)$ has no 8-dimensional irreducible representation. 

\item $B_2=\frak{so}(5)$

By (\cite[24.30]{Fulton}), $\dim \G_{a_1,a_2}=(a_1+1)(a_1+a_2+2)(2a_1+a_2+3)(a_2+1)/6$. No $a_i$ makes it dimension 8. So $B_2$ has no irreducible representation of dimension 8.

\item $B_3=\frak{so}(7)$ 

Again by (\cite[24.30]{Fulton}), 
$\dim \G_{a_1,a_2,a_3}=(a_1+1)(a_3+1)^2(a_1+a_2+2)(a_1+2a_2+a_3+4)(a_1+a_2+a_3+3)(a_2+a_3+2)(2a_1+2a_2+a_3+5)(2a_2+a_3+3)/720.$ Still no $a_i$ make it 8. Therefore $B_3$ has no 8-dimensional irreducible representation.

\item $C_3=\frak{sp}(6)$

By (\cite[24.20]{Fulton}), $\dim \G_{a_1, a_2,a_3}=(a_3+1)(a_2+a_3+2)(a_1+a_2+a_3+3)(a_1+1)(a_2+1)(a_1+a_2+2)(a_1+2a_2+2a_3+5)(a_1+a_2+2a_3+4)(a_2+2a_3+3)/720$. No $a_i$ make it 8. Hence $\frak{sp}(6)$ has no irreducible 8-dimensional representation. 

\item $C_4=\frak{sp}(8)$

Then the second exterior product of standard representation $\wedge^2 V$ decomposes to $\wedge^2 V \cong \C \oplus W$ with $W$ irreducible $\frak{sp}(8)$-representation. So $\End_{\frak{sp}(8)}(\wedge^2 V)=2<3$. 

\end{enumerate}

\section{crystal model} \label{crystal model}

Assume we have a $F$-isocrystal $\cV$ over $C$. Then it is in prior a crystal over $C$. Locally, it corresponds to a module with connection $(M. \nabla)$ over $\tilde C$ such that $M \otimes B(k)$ admits a $\s$-linear morphism $F$. The point is to descend the Frobenius $F$.

Though $F$ may not descend to $M$, we can consider $M'=\sum_n F^{(n)}(M)$. Then $M \subset M' \subset M\otimes B(k_0)$. One can mimic the the proof of ( \cite[Theorem 2.6.1]{Katz}) to show $M'$ is finitely generated provided that all slopes are nonnegative. 

Since $\tilde C$ is regular of dimension 2, taking the double dual $M'^{\vee\vee}$ gives a locally free sheaf over $\tilde C$. 

If $\cV$ is an isocrystal with slopes between 0 and 1, then we can further choose a morphism $V: \cV\ra \cV^\s$ such that $V\circ F= F\circ V =p.$ Hence such isocrystal has a model of Dieudonne crystal.

\end{appendix}

\bibliographystyle{amsplain}
\bibliography{mybib}{}

\providecommand{\bysame}{\leavevmode\hbox to3em{\hrulefill}\thinspace}
\providecommand{\MR}{\relax\ifhmode\unskip\space\fi MR }
% \MRhref is called by the amsart/book/proc definition of \MR.
\providecommand{\MRhref}[2]{%
  \href{http://www.ams.org/mathscinet-getitem?mr=#1}{#2}
}
\providecommand{\href}[2]{#2}
\begin{thebibliography}{10}

\bibitem{BBM}
Pierre Berthelot, Lawrence Breen, and William Messing, \emph{Th\'eorie de
  {D}ieudonn\'e cristalline. {II}}, Lecture Notes in Mathematics, vol. 930,
  Springer-Verlag, Berlin, 1982. \MR{667344 (85k:14023)}

\bibitem{deJ}
A.~J. de~Jong, \emph{Crystalline {D}ieudonn\'e module theory via formal and
  rigid geometry}, Inst. Hautes \'Etudes Sci. Publ. Math. (1995), no.~82, 5--96
  (1996). \MR{1383213 (97f:14047)}

\bibitem{deJ2}
\bysame, \emph{Homomorphisms of {B}arsotti-{T}ate groups and crystals in
  positive characteristic}, Invent. Math. \textbf{134} (1998), no.~2, 301--333.
  \MR{1650324 (2000f:14070a)}

\bibitem{Deli}
P.~Deligne, \emph{Cat\'egories tannakiennes}, The {G}rothendieck {F}estschrift,
  {V}ol.\ {II}, Progr. Math., vol.~87, Birkh\"auser Boston, Boston, MA, 1990,
  pp.~111--195. \MR{1106898 (92d:14002)}

\bibitem{Hodgecycles}
Pierre Deligne, James~S. Milne, Arthur Ogus, and Kuang-yen Shih, \emph{Hodge
  cycles, motives, and {S}himura varieties}, Lecture Notes in Mathematics, vol.
  900, Springer-Verlag, Berlin, 1982. \MR{654325 (84m:14046)}

\bibitem{Friedman}
Robert Friedman, \emph{Algebraic surfaces and holomorphic vector bundles},
  Universitext, Springer-Verlag, New York, 1998. \MR{1600388 (99c:14056)}

\bibitem{Fulton}
William Fulton and Joe Harris, \emph{Representation theory}, Graduate Texts in
  Mathematics, vol. 129, Springer-Verlag, New York, 1991, A first course,
  Readings in Mathematics. \MR{1153249 (93a:20069)}

\bibitem{Katz}
Nicholas~M. Katz, \emph{Slope filtration of {$F$}-crystals}, Journ\'ees de
  {G}\'eom\'etrie {A}lg\'ebrique de {R}ennes ({R}ennes, 1978), {V}ol. {I},
  Ast\'erisque, vol.~63, Soc. Math. France, Paris, 1979, pp.~113--163.
  \MR{563463 (81i:14014)}

\bibitem{Moller}
Martin M{\"o}ller, Eckart Viehweg, and Kang Zuo, \emph{Special families of
  curves, of abelian varieties, and of certain minimal manifolds over curves},
  Global aspects of complex geometry, Springer, Berlin, 2006, pp.~417--450.
  \MR{2264111 (2007k:14054)}

\bibitem{Moon}
B.~J.~J. Moonen and Yu.~G. Zarhin, \emph{Hodge classes and {T}ate classes on
  simple abelian fourfolds}, Duke Math. J. \textbf{77} (1995), no.~3, 553--581.
  \MR{1324634 (96b:14010)}

\bibitem{Mum}
D.~Mumford, \emph{A note of {S}himura's paper ``{D}iscontinuous groups and
  abelian varieties''}, Math. Ann. \textbf{181} (1969), 345--351. \MR{0248146
  (40 \#1400)}

\bibitem{famofav}
David Mumford, \emph{Families of abelian varieties}, Algebraic {G}roups and
  {D}iscontinuous {S}ubgroups ({P}roc. {S}ympos. {P}ure {M}ath., {B}oulder,
  {C}olo., 1965), Amer. Math. Soc., Providence, R.I., 1966, pp.~347--351.
  \MR{0206003 (34 \#5828)}

\bibitem{Ogus}
Arthur Ogus, \emph{{$F$}-isocrystals and de {R}ham cohomology. {II}.
  {C}onvergent isocrystals}, Duke Math. J. \textbf{51} (1984), no.~4, 765--850.
  \MR{771383 (86j:14012)}

\bibitem{JX}
J~Xia, \emph{A characterization of the good reduction of mumford curve}.

\bibitem{Xia3}
Jie Xia, \emph{On the shimura curves in positive characteristic}.

\bibitem{Zarhin}
Ju.~G. Zarhin, \emph{Isogenies of abelian varieties over fields of finite
  characteristic}, Mat. Sb. (N.S.) \textbf{95(137)} (1974), 461--470, 472.
  \MR{0354685 (50 \#7162b)}

\end{thebibliography}

\end{document}